\documentclass[11pt]{article}

\usepackage{amsmath}
\usepackage{amssymb}
\usepackage{amsthm}
\usepackage{geometry}
\usepackage{makeidx}
\usepackage{enumerate}
\usepackage{stmaryrd}
\usepackage[english]{babel}
\usepackage[T1]{fontenc}
\usepackage{lmodern}
\usepackage[utf8]{inputenc}
\usepackage{pgf,tikz}
\usetikzlibrary{arrows}
\usepackage{relsize}

\oddsidemargin -0.5cm
\evensidemargin 0cm
\topmargin -2cm
\textheight 22cm
\textwidth 17 cm

\newtheorem{lemma}{Lemma}[section]
\newtheorem{corollary}[lemma]{Corollary}
\newtheorem{proposition}[lemma]{Proposition}
\newtheorem{theorem}[lemma]{Theorem}

\newtheorem{remark}[lemma]{Remark}
\numberwithin{equation}{section}

\allowdisplaybreaks

\newcommand{\R}{\mathbb{R}}

\newcommand\N{\mathbb{N}}
\newcommand\Z{\mathbb{Z}}
\def\C{\mathbb C}
\def\CC{\mathcal{C}}

\def\11{1\!\!1}
% Note: use \support instead! Not \supp.

\def\f12{\frac{1}{2}}

\title{Bernstein inequalities via the heat semigroup}
\author{Rafik Imekraz, El Maati Ouhabaz 
\thanks{\noindent Univ. Bordeaux, Institut de Math\'ematiques (IMB). CNRS UMR 5251. 351,  
Cours de la Lib\'eration 33405 Talence, France.
 Rafik.Imekraz@math.u-bordeaux.fr, Elmaati.Ouhabaz@math.u-bordeaux.fr}}

\begin{document}

\date{}
\maketitle

\noindent \begin{abstract} We extend the classical Bernstein inequality  to a general setting including the Laplace-Beltrami operator,   Schr\"odinger operators and divergence form elliptic operators on Riemannian manifolds or domains. We  prove $L^p$ Bernstein inqualities as well as  a ``reverse inequality'' which  is new even for compact manifolds (with or without boundary). Such a reverse inequality can be seen as the dual of the Bernstein inequality. The heat kernel will be the backbone of our approach but we also develop new techniques. For example,  once reformulating Bernstein inequalities in a semi-classical fashion we prove and use  weak factorization of smooth functions {\it \`a la} Dixmier-Malliavin and $BMO-L^\infty$  multiplier results (in contrast to the usual $L^\infty-BMO$ ones).  Also, our approach reveals a link between the $L^p$-Bernstein inequality and the boundedness on $L^p$ of the Riesz transform. 
\end{abstract}

\tableofcontents
%\newpage

\section{Introduction and main results}

Given any trigonometric polynomial $P(x) = \sum\limits_{k=-N}^N \alpha_k e^{ikx}$, for $x\in \R$ and  $(\alpha_{-N},\dots,\alpha_N) \in \C^{2N+1}$, the classical Bernstein inequality is given as follows 
 \begin{equation}\label{bernstein0}
 \| P' \|_\infty \le N \| P \|_\infty.
 \end{equation}
 This inequality plays an important role in many areas of mathematics such as approximation theory, random trigonometric series %(cf. Kahane \cite{kahane}) 
 and random Dirichlet series. We refer to the recent survey by Queff\'elec and Zarouf \cite{queff-zarouf} for references,  history and developments of the Bernstein inequality.  

%In the present paper we extend this inequality in several directions and we also prove a dual version of that inequality, named below reverse Bernstein inequality. Our reverse Bernstein inequality (see $(RB_q)$ in Theorem \ref{theo3-p} and the end of the paper) is new even for the important case of compact Riemannian manifolds.

In the present paper we extend this inequality in several directions. The point of view is to replace the trigonometric polynomials by any combination of
eigenfunctions of a given operator $L$  on a Riemannian manifold. Here $L$ is either the Laplace-Beltrami operator on a compact manifold or a Schr\"odinger operator, or a divergence form elliptic operator on a domain. We give a necessary and sufficient condition involving the heat semigroup (see Theorem \ref{theo2-p}) so that the Bernstein inequality can be extended to this general setting.
We also prove a dual version of that inequality, named below \textit{a reverse Bernstein inequality}.  This latter inequality is new even for compact Riemannian manifolds (see $(RB_q)$ in Theorem \ref{theo3-p}). 

Before stating some of our main results and also previously known ones we start by describing the setting of the paper.

Let $M$ be a smooth connected Riemannian manifold. We  denote by $d$ and $\mu$ the Riemannian distance and measure of $M$. 
For simplicity, we use the usual notation  $V(x,r) := \mu(B(x,r))$ for any $x\in M$ and $r > 0$, where $B(x,r)$ denotes the open ball of centre $x$ and radius $r$. 
We assume that $(M,d,\mu)$ is a space of homogenous type. That is the following doubling property holds for some constant $C > 0$ 
\begin{equation*}
 V(x,2r)\leq C V(x,r) \quad  \mbox{ for all } x\in M \quad \mbox{and}\quad \forall r>0.
\end{equation*}
It follows easily from this property that there exists a positive constant $n \le \frac{\ln(C)}{\ln(2)}$ such that :
%Setting $n : =\frac{\ln(C)}{\ln(2)}$, it is easy to see that the doubling property implies the stronger one
\begin{equation}\label{doublin}
V(x,sr)\leq Cs^n V(x,r) \quad \mbox{ for all } s\geq 1.
\end{equation}
Given  a non-negative potential $W \in L^1_{loc}(M)$, 
%Let us consider a non-negative self-adjoint Schr\"{o}dinger operator $L:D(L)\rightarrow L^2(X)$ with domain $D(L)\subset L^2(X)$ of the form
we consider the Schr\"odinger operator 
\begin{equation*}
 L := -\Delta + W.
\end{equation*}
As usual, $L$ is defined through its quadratic form (see the monographs of Davies \cite{Davies89} or Ouhabaz \cite[Chapter 1]{maati}). It is a non-negative self-adjoint operator on $L^2(M)$ with an appropriate dense domain $D(L)$ containing $\CC_c^\infty(M)$.
It is a basic fact that $-L$ is the generator of a positive semigroup $(e^{-tL})_{t\geq 0}$ of contractions on $L^2(M)$. We assume that $e^{-tL}$ is given by an integral kernel $p_t(x,y)$ in the usual sense
\begin{equation*}
(e^{-tL} f )(x)=\int_{M} p_t(x,y) f(y) d\mu(y)
\end{equation*}
for every $f \in L^2(M)$ and for almost every $x \in M$. The function $p_t(x,y)$ is called the heat kernel of $L$. We assume that $p_t(x,y)$ satisfies the following Gaussian upper bound
\begin{equation*}
%\begin{array}{cccrcl}
 |p_t(x,y)|  \leq  \frac{C}{V(x,\sqrt{t})}\exp\left( -c\frac{d(x,y)^2}{t}\right) \eqno(G)
%\end{array}
\end{equation*}
for some positive constant $C$, for any $t>0$ and for almost every $(x,y)  \in M \times M$.  We recall that due to the non-negativity of the potential $W$, the Gaussian upper bound $(G)$ holds for the heat kernel of $L$ if it holds for  the heat kernel of $\Delta$ (see \cite[page 195]{maati}). The validity of $(G)$ for the Laplacian is a classical subject which has been studied for many  years, see e.g. Li and Yau \cite{liyau}, Davies \cite{Davies89}, Grigor'yan \cite{Grigo-livre} and the references therein. 

For the sake of clarity, we begin by stating our results in the case where the spectrum of the Schr\"{o}dinger operator $L$ is discrete. In this case, we denote by  $(\lambda_k^2)$ the sequence of eigenvalues of $L$ and $\phi_k$ the corresponding normalized eigenfunctions as follows
\begin{equation}\label{dis}
L\phi_k=\lambda_k^2 \phi_k,  \qquad   \left\Vert \phi_k\right\Vert_{2}=1, \qquad  0\leq \lambda_0 < \lambda_1 \leq \dots \rightarrow +\infty.
\end{equation}
The discreteness of the spectrum of $L$ holds for instance if $M$ is compact or if  $M=\R^d$ and  $W(x) \to +\infty$ as $|x |  \to + \infty$.
In addition, since $M$ is connected, it is a basic fact  that the first eigenvalue $\lambda^2_0$ is simple. Hence, $\lambda_k > 0$ for all $k \ge 1$. 

%We refer to Section \ref{semi} for the general case. Since we first restrict the presentation of our results under the assumption \eqref{dis}, one may %assume that there is $N_0\in \N$ such that $\lambda_{N_0}\geq 1$ in \eqref{dis}.
\begin{theorem}\label{theo1-p}
Suppose that the doubling condition \eqref{doublin} and the Gaussian upper bound $(G)$ are satisfied. Assume also that $L = -\Delta + W$ has discrete spectrum (with the above notations \eqref{dis}) and 
let  $p\in [1,2]$.  Then the following \textit{Bernstein inequality} holds:  there exists a constant $C >0$ such that for every $N \geq 1$ and every $(N+1)$-tuple of coefficients 
 $(\alpha_0,\dots,\alpha_N)\in \C^{N+1}$, 
\begin{equation*}
\| \nabla \Big{(}  \sum_{k=0}^N  \alpha_k \phi_k \Big{)}  \|_p  + \| \sqrt{W} \Big{(}  \sum_{k=0}^N  \alpha_k \phi_k \Big{)}  \|_p \le C \lambda_N \
\|  \Big{(}  \sum_{k=0}^N  \alpha_k \phi_k \Big{)}  \|_p. \eqno(B_p)
\end{equation*}
\end{theorem}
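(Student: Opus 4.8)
\emph{Plan of proof.} Write $f=\sum_{k=0}^N\alpha_k\phi_k$ and set $R:=\lambda_N^2>0$. Fix $\chi\in\CC_c^\infty([0,2))$ with $\chi\equiv 1$ on $[0,1]$ and put $\eta(u):=e^u\chi(u)\in\CC_c^\infty([0,2))$, so that $\eta(u)e^{-u}=\chi(u)$. Every eigenvalue occurring in $f$ lies in $[0,R]$, so the spectral theorem gives $\chi(L/R)f=f$, whence
\begin{equation*}
\nabla f=\nabla e^{-L/R}\,\eta(L/R)f,\qquad \sqrt W f=\sqrt W e^{-L/R}\,\eta(L/R)f .
\end{equation*}
Therefore $(B_p)$ follows as soon as we establish, uniformly in $R>0$,
\begin{equation*}
\mathrm{(i)}\ \ \|\eta(L/R)\|_{p\to p}\le C,\qquad\qquad \mathrm{(ii)}\ \ \|\nabla e^{-L/R}\|_{p\to p}+\|\sqrt W e^{-L/R}\|_{p\to p}\le C\sqrt R ,
\end{equation*}
for then $\|\nabla f\|_p+\|\sqrt W f\|_p\le C\sqrt R\,\|\eta(L/R)f\|_p\le C\sqrt R\,\|f\|_p=C\lambda_N\|f\|_p$.

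Step $\mathrm{(i)}$ is soft: since $\eta$ is smooth and compactly supported, $\eta(L/R)$ has an integral kernel $K_R$ satisfying, for every $M$,
\begin{equation*}
|K_R(x,y)|\le\frac{C_M}{V(x,R^{-1/2})}\bigl(1+\sqrt R\,d(x,y)\bigr)^{-M},
\end{equation*}
a standard consequence of the doubling property and $(G)$ (finite speed of propagation for $\cos(t\sqrt L)$, or the Plancherel-type estimates \`a la Duong--Ouhabaz--Sikora). Integrating this bound in $x$ and in $y$ and using \eqref{doublin} gives uniform $L^1\to L^1$ and $L^\infty\to L^\infty$ bounds, hence $\mathrm{(i)}$ for all $p\in[1,\infty]$.

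For $\mathrm{(ii)}$, the case $p=2$ is immediate from spectral calculus: $t\bigl(\|\nabla e^{-tL}g\|_2^2+\|\sqrt W e^{-tL}g\|_2^2\bigr)=t\langle Le^{-2tL}g,g\rangle\le\tfrac{1}{2e}\|g\|_2^2$. The case $p=1$ is where the heat kernel really enters. The operators $\sqrt t\,\nabla e^{-tL}$ and $\sqrt t\,\sqrt W e^{-tL}$ satisfy $L^2$ Davies--Gaffney off-diagonal estimates: for $E,F$ closed and $g$ supported in $F$,
\begin{equation*}
\|\sqrt t\,\nabla e^{-tL}g\|_{L^2(E)}+\|\sqrt t\,\sqrt W e^{-tL}g\|_{L^2(E)}\le C\,e^{-c\,d(E,F)^2/t}\,\|g\|_{L^2(F)},
\end{equation*}
obtained from the Davies--Gaffney estimate for $e^{-tL}$ (valid since $d$ is the intrinsic distance of the form) together with a Caccioppoli energy inequality — the latter being exactly what controls a gradient, and $\sqrt W$ times a solution, by $L^2$ norms of the solution on a slightly enlarged set. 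Writing $\sqrt t\,\nabla e^{-tL}=(\sqrt t\,\nabla e^{-tL/2})\circ e^{-tL/2}$, and using that $(G)$ makes $e^{-tL/2}$ map $L^1$ into $L^2$ with Gaussian off-diagonal decay, the usual annular summation against the volumes $V(x,\sqrt t)$ (Blunck--Kunstmann; Auscher--Coulhon--Duong--Hofmann) yields $\|\sqrt t\,\nabla e^{-tL}\|_{1\to1}+\|\sqrt t\,\sqrt W e^{-tL}\|_{1\to1}\le C$. Interpolating $p=1$ with $p=2$ and taking $t=1/R$ gives $\mathrm{(ii)}$, and hence $(B_p)$, for all $p\in[1,2]$.

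The hard part is precisely the $L^1$ bound in $\mathrm{(ii)}$: squeezing out of the \emph{upper} Gaussian bound $(G)$ alone — with no lower bound and no pointwise control on $\nabla_x p_t(x,y)$ — the uniform $L^1$-boundedness of $\sqrt t\,\nabla e^{-tL}$ and $\sqrt t\,\sqrt W e^{-tL}$; here the Caccioppoli inequality does the work of replacing the missing gradient bound, and the $L^1$ off-diagonal summation must be run with the $x$-dependent volumes $V(x,\sqrt t)$, which is where doubling is used. A routine point to dispatch first is the a priori membership $\nabla f,\sqrt W f\in L^p$, immediate in each of the settings at hand since the $\phi_k$, and hence $f$, are smooth with suitable decay. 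Finally, for $1<p\le2$ one could instead bypass $\mathrm{(ii)}$ by factoring $\nabla=(\nabla L^{-1/2})\circ L^{1/2}$ and $\sqrt W=(\sqrt W L^{-1/2})\circ L^{1/2}$ and invoking the $L^p$-boundedness of the Riesz transforms under $(G)$ together with a Mihlin--H\"ormander estimate for $L^{1/2}\chi(L/R)=\lambda_N\,(\sqrt{\cdot}\,\chi)(L/R)$ — this is the link with the Riesz transform mentioned in the introduction.
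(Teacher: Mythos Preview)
Your proof is correct and reaches the same conclusion through a route that is close in spirit to the paper's but organized differently, and in one respect more direct.

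The paper first proves the semi-classical inequality $(SB_p)$ (Theorem~\ref{theo1-psc}) and then deduces $(B_p)$ from it (Proposition~\ref{proequiv}). To prove $(SB_1)$ the paper writes, via the Fourier inversion formula, $\psi(hL)=\int_{\R}\widehat{\psi_e}(\xi)\,e^{-(2-i\xi)hL}\,d\xi$, factors $\nabla e^{-(2-i\xi)hL}=\nabla e^{-hL}\circ e^{-(1-i\xi)hL}$, and then combines the key estimate $\|\nabla e^{-hL}\|_{1\to1}\le C/\sqrt{h}$ with the complex-time bound \eqref{holog-L1} on $\|e^{-(1-i\xi)hL}\|_{1\to1}$. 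Your one-step factorization $\nabla f=\nabla e^{-L/R}\,\eta(L/R)f$ dispenses with the Fourier integral and with complex time altogether: you only need the real-time regularity $\|\nabla e^{-tL}\|_{1\to1}+\|\sqrt{W}e^{-tL}\|_{1\to1}\le C/\sqrt{t}$ (this is exactly $(R_1)$ in the paper's notation) together with the uniform $L^p$ multiplier bound for $\eta(L/R)$. The paper also proves both of these ingredients (see \eqref{bound-L1}--\eqref{bound-L1-1} and \eqref{mult}), so the substance is shared; your packaging is simply leaner for the purpose of $(B_p)$ alone, whereas the paper's Fourier/complex-time route is reused later (Theorems~\ref{theo2-psc} and~\ref{theo-LpLqsc}).

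For the crucial $L^1$ bound $(R_1)$ the two arguments differ technically. The paper invokes Grigor'yan's weighted $L^2$-estimate \eqref{grigo} (extended to Schr\"odinger operators as in \cite{DOY2006}) and then Cauchy--Schwarz against \eqref{unifo}. You instead obtain $L^2$--$L^2$ Davies--Gaffney decay for $\sqrt{t}\,\nabla e^{-tL}$ and $\sqrt{t}\,\sqrt{W}e^{-tL}$ via a Caccioppoli energy inequality, compose with the $L^1\!\to\!L^2$ Gaussian off-diagonal of $e^{-tL/2}$ coming from $(G)$, and sum over dyadic annuli. These are two standard and essentially equivalent ways to extract the same information (indeed, Grigor'yan's weighted inequality is a continuous form of the Davies--Gaffney/Caccioppoli bound). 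Either proof goes through under the stated hypotheses \eqref{doublin} and $(G)$; the Riesz-transform alternative you mention at the end for $1<p\le2$ is also valid and is precisely the link alluded to after Theorem~\ref{theo2-psc}.
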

%Actually, the proof allows $C$ to be independent of $p$ (by interpolation).
%An interesting aspect of the previous result is that we obtain a gradient estimate without any gradient estimate assumption on the heat kernel $p_t(x,y)$. One %may compare that situation to \cite[Theorem 2.1]{filbir2010} where gradient assumptions are necessary to reach the case $p<2$. 

Our next result is for the case $p \in [2, \infty]$.  In this case, one needs a regularity property of the heat semigroup, which turns out to be also necessary.  

\begin{theorem} \label{theo2-p} Assume the doubling condition \eqref{doublin} and the Gaussian upper bound $(G)$. Assume also that $L = -\Delta + W$ has discrete spectrum (with the above notations \eqref{dis}).
For any $p \in [2, \infty]$, the Bernstein inequality $(B_p)$ is equivalent to the following regularity property 
\begin{equation*}
\| \nabla e^{-tL} \|_{p \to p} +\| \sqrt{W} e^{-tL} \|_{p \to p}  \le  \frac{C}{\sqrt{t}} \quad \mbox{ for all }t>0. \eqno(R_p) 
\end{equation*}
%In particular, if the Riesz transform type operators $\nabla L^{-1/2}$ and $\sqrt{W} L^{-1/2}$ are bounded on $L^p(M)$ then $(B_p)$ holds true.
\end{theorem}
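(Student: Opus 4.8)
The plan is to prove the two implications separately. The easy direction is $(B_p)\Rightarrow(R_p)$. Fix $t>0$ and let $f\in L^p$. Decompose $e^{-tL}f$ spectrally; the point is that $e^{-tL}f$ is well approximated (in $L^p$) by a finite linear combination $\sum_{k=0}^N\alpha_k\phi_k$ of eigenfunctions, and one can choose the truncation level $N$ so that $\lambda_N\le c/\sqrt t$ while still capturing the bulk of the semigroup. Concretely, write $e^{-tL}=e^{-\frac t2 L}e^{-\frac t2 L}$, approximate $e^{-\frac t2 L}f$ in $L^p$ by its spectral truncation $P_Nf$ at frequency $\lambda_N\approx 1/\sqrt t$ (using that the high-frequency tail $e^{-\frac t2\lambda^2}$ is exponentially small together with a Gaussian-bounds $L^p$-boundedness of the spectral projector obtained from $(G)$), apply $(B_p)$ to the polynomial $e^{-\frac t2 L}P_Nf$ — whose top frequency is still $\lesssim 1/\sqrt t$ — and control the error term $\nabla e^{-\frac t2 L}(e^{-\frac t2 L}f-P_Nf)$ and $\sqrt W\,e^{-\frac t2 L}(\cdots)$ directly by the Gaussian bound $(G)$ applied to $\nabla e^{-\frac t2 L}$ and $\sqrt W e^{-\frac t2 L}$, which are $L^2\to L^2$ bounded with norm $O(1/\sqrt t)$ and whose kernels satisfy Gaussian estimates under $(G)$. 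This yields $\|\nabla e^{-tL}f\|_p+\|\sqrt W e^{-tL}f\|_p\lesssim \frac1{\sqrt t}\|f\|_p$.

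For the converse $(R_p)\Rightarrow(B_p)$, the strategy is to build a multiplier $m_N$ that equals $1$ on the spectrum up to $\lambda_N$ and that factors through the heat semigroup. Let $u=\sum_{k=0}^N\alpha_k\phi_k$. Choose a function $\varphi\in\CC_c^\infty$ with $\varphi\equiv 1$ on $[0,1]$, supported in $[0,2]$, and write $u=\varphi(L/\lambda_N^2)u$. The idea is to represent $\nabla\varphi(L/\lambda_N^2)$ and $\sqrt W\varphi(L/\lambda_N^2)$ as an average of $\nabla e^{-sL/\lambda_N^2}$ (resp.\ $\sqrt W e^{-sL/\lambda_N^2}$) against a rapidly decaying kernel. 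Indeed, the function $\lambda\mapsto\varphi(\lambda)e^{s\lambda}$ is, for each fixed $s>0$ in a compact range, a nice function of $\lambda$; writing $\varphi(\lambda)=\int_0^\infty e^{-s\lambda}\,d\nu_s$ type formula (or more robustly, $\varphi(L/\lambda_N^2)=\int_0^\infty e^{-sL/\lambda_N^2}\,K(s)\,ds$ where $K$ has fast decay, obtained by inverting the Laplace transform of $\varphi$, using $\varphi\in\CC_c^\infty$ so that its Laplace-transform density is Schwartz), one gets
\begin{equation*}
\nabla u = \nabla\varphi(L/\lambda_N^2)u = \int_0^\infty \big(\nabla e^{-sL/\lambda_N^2}\big)\,K(s)\,u\,ds,
\end{equation*}
and similarly for $\sqrt W u$. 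Applying $(R_p)$ with $t=s/\lambda_N^2$ gives $\|\nabla e^{-sL/\lambda_N^2}u\|_p\le \frac{C\lambda_N}{\sqrt s}\|u\|_p$, so that $\|\nabla u\|_p\le C\lambda_N\|u\|_p\int_0^\infty \frac{|K(s)|}{\sqrt s}\,ds$, and the last integral is finite because $K$ is Schwartz and $|K(s)|/\sqrt s$ is integrable near $0$. The same argument handles the $\sqrt W$ term, and summing gives $(B_p)$.

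The main obstacle is constructing the representation of the cutoff $\varphi(L/\lambda_N^2)$ as a weighted integral of heat semigroups with a kernel $K$ that is integrable against $s^{-1/2}\,ds$ and, crucially, \emph{uniform in $N$} — i.e.\ the decay of $K$ must not degrade as $\lambda_N\to\infty$. This is exactly where the scaling $L/\lambda_N^2$ is essential: after rescaling, the multiplier one must synthesize from heat semigroups is the fixed function $\varphi$, independent of $N$, so the kernel $K$ is fixed once and for all. One must check that the subordination formula converges in the strong operator topology on $L^p$ (not just $L^2$), which is where one again invokes the Gaussian bound $(G)$ to ensure $e^{-sL/\lambda_N^2}$ and its gradient are bounded on $L^p$ for all $p$, with the polynomial-in-$s$ control needed to interchange $\int$ and the (bounded) operators. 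A secondary technical point in the forward direction is justifying the $L^p$-convergence of the spectral truncation and estimating the tail, for which one uses that $\varphi(\lambda/\lambda_N)$-type smooth truncations are $L^p$-bounded uniformly (a standard consequence of $(G)$ via Davies–Gaffney / finite-speed-of-propagation or via a spectral multiplier theorem), rather than the sharp cutoff $\mathbf 1_{[0,\lambda_N]}$ itself.
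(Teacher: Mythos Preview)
Both directions of your proposal have genuine gaps.

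\medskip
\textbf{The direction $(R_p)\Rightarrow(B_p)$.} Your key step is the representation
\[
\varphi(\lambda)=\int_0^\infty e^{-s\lambda}\,K(s)\,ds
\]
with $K$ Schwartz on $[0,\infty)$. This is impossible: the right-hand side, as a Laplace transform of an integrable function, is real-analytic in $\lambda>0$, whereas your $\varphi$ is compactly supported and nonconstant. No function $K$ (Schwartz or otherwise) can represent a compactly supported cutoff this way. The paper circumvents this by using the \emph{Fourier} inversion formula with a shift: writing $\psi_e(x)=\psi(x)e^{2x}$ one has
\[
\psi(hL)=\int_{\R}\widehat{\psi_e}(\xi)\,e^{-(2-i\xi)hL}\,\frac{d\xi}{\sqrt{2\pi}},
\]
so that $\nabla\psi(hL)=\int_{\R}\widehat{\psi_e}(\xi)\,\nabla e^{-hL}\,e^{-(1-i\xi)hL}\,d\xi$. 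One then applies $(R_p)$ to the factor $\nabla e^{-hL}$ and the complex-time bound $\|e^{-zL}\|_{p\to p}\le C_\varepsilon(|z|/\Re z)^{n|1/2-1/p|+\varepsilon}$ (a consequence of $(G)$) to the factor $e^{-(1-i\xi)hL}$. The point is that complex-time semigroups are unavoidable here.

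\medskip
\textbf{The direction $(B_p)\Rightarrow(R_p)$.} Your error-term control is circular. You propose to bound $\|\nabla e^{-\frac t2 L}(e^{-\frac t2 L}f-P_Nf)\|_p$ ``by the Gaussian bound $(G)$ applied to $\nabla e^{-\frac t2 L}$'', but for $p>2$ there is no a priori $L^p\to L^p$ bound on $\nabla e^{-\frac t2 L}$---that is precisely $(R_p)$, which you are trying to prove. The $L^2\to L^2$ bound you mention does not help in $L^p$. Moreover, with $\lambda_N\approx 1/\sqrt t$ the tail $e^{-\frac t2\lambda^2}$ at $\lambda=\lambda_N$ is only $\approx e^{-1/2}$, so the remainder is not small; pushing $\lambda_N$ larger destroys the constant you get from $(B_p)$.

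The paper's route is different. It first shows (easily) that $(B_p)$ is equivalent to the semi-classical inequality $(SB_p)$: $\|\nabla\psi(hL)\|_{p\to p}+\|\sqrt W\,\psi(hL)\|_{p\to p}\le C/\sqrt h$ for $\psi\in\CC_c^\infty([0,\infty))$. The substantive step $(SB_p)\Rightarrow(R_p)$ is obtained via a weak factorization \`a la Dixmier--Malliavin: one writes
\[
e^{-x}=\int_0^\infty \psi_1\!\left(\frac{x}{y}\right)F_1(y)\,\frac{dy}{\sqrt y}+\int_0^\infty \psi_2\!\left(\frac{x}{y}\right)F_2(y)\,\frac{dy}{\sqrt y}
\]
with $\psi_i\in\CC_c^\infty(0,\infty)$ and $F_i\in L^1(0,\infty)$ (obtained from a factorization of $F(x)=\sqrt x\,e^{-x}$). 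Then $\nabla e^{-hL}=\sum_i\int_0^\infty \nabla\psi_i(hL/y)\,F_i(y)\,dy/\sqrt y$, and applying $(SB_p)$ to each $\psi_i$ with parameter $h/y$ gives $\|\nabla e^{-hL}\|_{p\to p}\le C/\sqrt h$. This factorization is the missing idea; your truncation/tail argument cannot substitute for it.
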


We view our estimate $(B_p)$ as a generalization of the Bernstein inequality \eqref{bernstein0} (in $L^p$)  in which the derivative $\frac{d}{dx}$ is replaced by $\nabla$ and the trigonometric  polynomial $P(x)$ by a combination of eigenfunctions since it encompasses 
\begin{equation} \label{bern2-p} 
\| \nabla \Big{(}  \sum_{k=0}^N  \alpha_k \varphi_k \Big{)}  \|_p  \le C \lambda_N \, 
\|  \Big{(} \sum_{k=0}^N  \alpha_k \varphi_k \Big{)}  \|_p. 
\end{equation}
 
\begin{remark}
\begin{enumerate}[i)]
\item The above results as well as the forthcoming ones are also valid if one replaces $M$ by any open subset $\Omega$ of $M$. For general $\Omega$, the space $(\Omega, d, \mu)$ may not be a space of homogeneous type. The assumptions here are $(G)$ holds on a.e. $x, y \in \Omega$ and $M$ satisfies  \eqref{doublin}.
\item Theorem \ref{theo2-p} shows the link between the Bernstein inequality $(B_p)$ and the Riesz transform through the regularity property $(R_p)$. We shall make this more precise after the statement of Theorem \ref{theo2-psc}.
\item If $(R_p)$ is replaced with the weaker bound $ \| \nabla e^{-tL} \|_{p \to p} \leq\frac{C}{\sqrt{t}}$, our proof shows $(B_p)$ with the gradient term only, i.e.  \eqref{bern2-p} holds. The same remark applies to the term  $\| \sqrt{W} e^{-tL} \|_{p \to p}$. 
\end{enumerate}
\end{remark}
%%%%

The question of extending the original Bernstein inequality  by replacing 
the trigonometric polynomials by eigenfunctions of a given self-adjoint operator has been considered in the past. 
For instance, an important local version of the Bernstein inequality was obtained by Donnelly and Fefferman \cite{donnelly1990growth} on boundaryless compact Riemannian manifolds.
Motivated by a stronger version of the local Donnelly-Fefferman inequalities, Ortega-Cerd\`a and Pridhnani proved 
in \cite[Corollary 3.3]{ortega2013carleson} a Bernstein inequality for a single eigenfunction $\phi_k$ of $L=-\Delta$, namely,
\begin{equation*}
\left\Vert \nabla \phi_k\right\Vert_{\infty}\leq C \lambda_k\left\Vert \phi_k\right\Vert_{\infty}.
\end{equation*}
Actually, the general form \eqref{bern2-p} is conjectured in \cite[page 157]{ortega2013carleson} but appears to be a consequence of  \cite[Theorem 2.1]{filbir2010} by Filbir and Mhaskar and heat kernel estimates of the Laplace-Beltrami operator. The paper \cite{filbir2010} deals with operators in an abstract setting, however additional assumptions are made on the heat kernel such as  gradient estimates which are rather restrictive. 
A related result is given by Imekraz \cite[Theorem 5.3]{imek-aif} for the case of Schr\"{o}dinger operators of the form $-\Delta+|x|^{2\alpha}$ for $\alpha\in \N^\star$ on $\R^d$. Although the method of the latter paper is quite flexible, it  deals with a specific class of pseudo-differential symbols.  In contrast, the approach of the present paper is  more general and gives a unified framework  relying merely on estimates on the heat semigroup. Our results extend the results mentioned above in many directions. Not only we deal with  both terms $\nabla \Big{(}  \sum\limits_{k=0}^N  \alpha_k \phi_k \Big{)}$ and $\sqrt{W} \Big{(}  \sum\limits_{k=0}^N  \alpha_k \phi_k \Big{)}$ for Schr\"odinger operators in our estimates,  we also assume much less regularity properties on the associated heat kernel. Our method is very flexible and applies to operators on domains of $M$. It also applies to elliptic operators in divergence form 
$$L = - \sum_{k,l=1}^n \frac{\partial}{\partial x_k} \left( c_{kl}(x) \frac{\partial}{\partial x_l} \right)$$
with bounded measurable coefficients $c_{kl}$ on domains of $\R^n$ (see Section \ref{sec-examples}). 

By similar ideas we also prove 
$L^p(M)-L^q(M)$ Bernstein inequalities. More precisely,

\begin{theorem}\label{theo-LpLq}
Suppose the doubling property \eqref{doublin} and the Gaussian upper bound $(G)$.
Suppose in addition that 
\begin{equation}\label{1infty}
|p_t(x,x)|  \leq  \frac{C}{t^{m/2}} \quad \mbox{for all  }t\in (0,1] \quad \mbox{and  a.e. } x\in M
\end{equation}
for some constant $m > 0$. 
Then, for any couple $(p,q)$ satisfying $1\leq p \leq q \leq 2$, there is a constant $C>0$ such for every $N \geq 1$ and every $(N+1)$-tuple of coefficients $(\alpha_0,\dots,\alpha_N)\in \C^{N+1}$ the following inequality holds 

\begin{equation*}
\Big\| \nabla \sum_{k=0}^N \alpha_k \phi_k     \Big\|_{q} + \Big\| \sqrt{W} \sum_{k=0}^N \alpha_k \phi_k     \Big\|_{q} \le C \lambda_N^{1+ m \left\vert\frac{1}{p}-\frac{1}{q} \right\vert}  \Big\| \sum_{k=0}^N \alpha_k \phi_k \Big\|_{p}. \eqno{(B_{p,q})}
\end{equation*}
If the regularity property  $(R_{q_0})$ holds for some $q_0 \in (2, \infty]$, then  $(B_{p,q})$ holds for $1\leq p\leq q \le q_0$.
 \end{theorem}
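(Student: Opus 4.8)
The plan is to reduce the $L^p$–$L^q$ estimate to two ingredients: the ordinary $L^p$ Bernstein inequality $(B_p)$ from Theorem \ref{theo1-p} (valid for $p\in[1,2]$, or from Theorem \ref{theo2-p} for larger $p$ if $(R_{q_0})$ is assumed) applied at the level $p=q$, together with a \emph{spectral multiplier / ultracontractivity} bound that plays the role of a Nikolskii-type inequality: for a function $f=\sum_{k=0}^N\alpha_k\phi_k$ spectrally localized in $[0,\lambda_N^2]$, one has $\|f\|_q\le C\lambda_N^{m|1/p-1/q|}\|f\|_p$. Granting this, the theorem follows in one line: writing $f=\sum\alpha_k\phi_k$, first apply $(B_q)$ to get $\|\nabla f\|_q+\|\sqrt W f\|_q\le C\lambda_N\|f\|_q$, then apply the Nikolskii estimate to bound $\|f\|_q\le C\lambda_N^{m|1/p-1/q|}\|f\|_p$, and multiply. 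So the real content is the Nikolskii inequality, and the main obstacle is establishing it from only $(G)$ and the on-diagonal bound \eqref{1infty}.

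For the Nikolskii step I would argue as follows. The on-diagonal estimate \eqref{1infty} together with $(G)$ and the doubling property \eqref{doublin} gives, by the standard $L^2$–$L^\infty$ interpolation (Coulhon–Grigor'yan type arguments), the full off-diagonal ultracontractive bound $\|e^{-tL}\|_{p\to q}\le C t^{-\frac m2(\frac1p-\frac1q)}$ for $1\le p\le q\le\infty$ and $t\in(0,1]$; in fact one only needs it for $t\in(0,1]$ since $\lambda_N\ge\lambda_1>0$, so the relevant scale $t\sim\lambda_N^{-2}$ is bounded. Next, choose a function $\psi\in\CC_c^\infty(\R)$ (or a Schwartz function) with $\psi\equiv1$ on $[0,1]$, and note that since $f$ is spectrally supported in $[0,\lambda_N^2]$ we have $f=\psi(L/\lambda_N^2)f$. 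Write $\psi(L/\lambda_N^2)=e^{-L/\lambda_N^2}\,m(L/\lambda_N^2)$ where $m(s)=e^{s}\psi(s)$, which is again Schwartz (here it is convenient that $\psi$ has compact support so $e^s\psi(s)$ decays). By the spectral multiplier theorem available under $(G)$ and doubling (Davies–Mauceri–Meda / Hebisch-type results), $m(L/\lambda_N^2)$ is bounded on $L^q$ uniformly in $N$; alternatively one avoids heavy multiplier theory by expanding $m$ via the Fourier/Laplace transform as a superposition $\int e^{-uL/\lambda_N^2}\,d\nu(u)$ with $\nu$ of finite total variation, which only uses the contractivity of the semigroup on $L^q$. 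Combining, $\|f\|_q=\|\psi(L/\lambda_N^2)f\|_q\le\|e^{-L/\lambda_N^2}\|_{p\to q}\,\|m(L/\lambda_N^2)f\|_p\le C\lambda_N^{m(\frac1p-\frac1q)}\|f\|_p$, which is exactly the claimed Nikolskii inequality (with $|1/p-1/q|=1/p-1/q$ since $p\le q$).

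Finally, for the last sentence of the theorem: if $(R_{q_0})$ holds for some $q_0\in(2,\infty]$, then by Theorem \ref{theo2-p} the Bernstein inequality $(B_q)$ holds for all $q\in[2,q_0]$ as well, and combined with $(B_q)$ for $q\in[1,2]$ we get $(B_q)$ for all $q\in[1,q_0]$; feeding this into the same two-step argument (Nikolskii inequality $\|f\|_q\le C\lambda_N^{m|1/p-1/q|}\|f\|_p$ for $1\le p\le q\le q_0$, which follows from the ultracontractivity bounds above with no restriction $q\le2$) yields $(B_{p,q})$ for $1\le p\le q\le q_0$. The point where care is needed is uniformity in $N$ of the multiplier bound on $m(L/\lambda_N^2)$ — this is why it is cleanest to realize $m$ as a genuine average of heat operators rather than invoking a black-box multiplier theorem, and why restricting attention to the bounded time scale $t\le1$ (legitimate because $\lambda_N\ge\lambda_1>0$) removes any large-time issues with \eqref{1infty}.
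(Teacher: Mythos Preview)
Your proposal is correct and is essentially the paper's argument reorganized: both factor the $L^p\to L^q$ estimate into an $L^q\to L^q$ gradient bound and an ultracontractivity (Nikolskii) step $\|e^{-tL}\|_{p\to q}\le Ct^{-\frac m2(\frac1p-\frac1q)}$ obtained from the on-diagonal hypothesis via the semigroup identity and interpolation; the paper performs this factorization on $\nabla e^{-hL}$ at the semi-classical level (writing $\|\nabla e^{-hL}\|_{p\to q}\le\|\nabla e^{-\frac h2 L}\|_{q\to q}\|e^{-\frac h2 L}\|_{p\to q}$ and feeding it into the Fourier representation \eqref{eq1}), whereas you invoke the already-established $(B_q)$ and a separate Nikolskii inequality for spectrally localized functions. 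One small caveat: your fallback justification for the uniform $L^p$-bound on $m(hL)$ via ``a superposition $\int e^{-uL/\lambda_N^2}\,d\nu(u)$ using only contractivity'' does not work with real $u$ (that would force $m$ to be completely monotone); one must allow complex times and use the bound \eqref{holog-L1}, which is exactly how the paper derives \eqref{mult}.
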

 
Note that obviously \eqref{1infty} follows from $(G)$ if $V(x,r) \ge C r^m$ for all $x \in M$ and $r  \in (0, 1]$. On the other hand, it is known that  \eqref{1infty} for some $m \in (2,+\infty)$ and all $t > 0$ is equivalent to a global Sobolev inequality (see Varopoulos \cite{varo85} or Davies \cite{Davies89}, Section 2.4).

Now we describe briefly the strategy of the proofs.  We borrow some ideas from two different subjects which are semi-classical analysis and the theory of spectral multipliers.  Firstly, we reformulate $(B_p)$ as the  {\it semi-classical Bernstein estimate}
\begin{equation*}
\left\Vert \nabla \psi(hL) f \right\Vert_{p} + \left\Vert \sqrt{W} \psi(hL) f \right\Vert_{p} \leq  \frac{C}{\sqrt{h}} \left\Vert f\right\Vert_{p},\qquad h>0,\quad f\in L^p(M) \eqno(SB_p)
 \end{equation*}
for every given  $\psi \in \CC_c^\infty([0, \infty))$ and $C$ is a positive constant  which may depend on $\psi$. It turns out that  $(B_p)$ and $(SB_p)$ are equivalent (see Section \ref{semi}). The reformulation  $(SB_p)$ allows us to apply techniques of spectral multipliers.   Another  advantage  of $(SB_p)$ is that it can be considered without assuming the spectrum of $L$ to be discrete. We deals with $(B_{p,q})$ in Theorem \ref{theo-LpLq} in a similar way. Now, in order to prove $(SB_p)$ we write
\begin{equation*}
\psi(hL) = \int_{\R} \widehat{\psi}(\xi+2i) e^{-h(2- i\xi)L} \frac{d\xi}{\sqrt{2\pi}} 
\end{equation*}
so that we need $L^p$-estimates for $\nabla e^{-h(2- i\xi)L}$ and $\sqrt{W} e^{-h(2+ i\xi)L}$. To estimate $\nabla e^{-hL}$ and $\sqrt{W} e^{-hL}$ we use weighted $L^2$-estimates due to Grigor'yan \cite{grigo95} (proved there in the case where  $W = 0$). The $L^p$-estimates of the remaining term 
 $e^{-h(1+ i\xi)L}$ are based on the fact that the estimate $(G)$ for $t > 0$ extends to $t \in \C^+$, this fact is taken from  Carron, Coulhon and Ouhabaz \cite{maati2} (a prior related result was proved by Davies \cite{Davies89} under the assumption \eqref{1infty}). These techniques have been  used  to prove spectral multiplier results for sel-adjoint operators in Duong, Ouhabaz and Sikora \cite{DOS} (see also Ouhabaz \cite[Chapter 7]{maati}).  

The first step of the proof of Theorem \ref{theo2-p} is also based on  the semi-classical point of view. In order to prove that the regularity property $(R_p)$ is necessary we need a new argument.  It is a based on a result stating that every smooth function $f:\R\rightarrow \R$ belonging to $L^1(\R)$, as well all its derivatives, can be written as
\begin{equation}\label{dixm}
 f = \rho_1 \ast f_1 + \rho_2 \ast  f_2,
 \end{equation}
where $\rho_i \in \CC_c^\infty(\R)$ and $f_i \in L^1(\R) $ for $ i = 1, 2$. This  weak factorization is based on a lemma of Dixmier and Malliavin \cite{dixmi} and it is interesting in its own (see a precise statement in Lemma \ref{dixm2}). 

\medskip
Another contribution of the present paper is to study  lower bounds in the Bernstein inequality. Shi and Xu \cite{shi2010} prove the following upper and lower bounds  on a boundaryless compact Riemannian manifold
\begin{equation} \label{sx10}
C^{-1} \lambda_k \left\Vert \phi_k \right\Vert_{\infty} \leq   \left\Vert \nabla \phi_k\right\Vert_{\infty}\leq C  \lambda_k \left\Vert \phi_k \right\Vert_{\infty}
\end{equation}
for all $k$ and $\lambda_k \ge 1$. %The lower bound is seen  as a consequence of a prior result of Zelditch \cite{zelditch2008} on nodal sets but 
Note that \eqref{sx10} is for each single eigenfunction and the proof does not seem to be adaptable to linear combinations of eigenfunctions.
In our next result, we prove lower bounds for linear combinations of eigenfunctions which we call {\it reverse Bernstein inequalities}.
We mention in passing that our results solve a question raised in \cite{shi2010} about extending \eqref{sx10} for the Neumann Laplacian on compact manifolds (see the end of the paper).  

\begin{theorem}\label{theo3-p}
Suppose that \eqref{doublin} and $(G)$ are satisfied and $L = -\Delta + W$ has discrete spectrum (with the above notations \eqref{dis}). Then we have the following  assertions. 
\begin{enumerate}[i) ]
\item For $q \in [2, \infty]$ there exists a constant  $C>0$ such that for every $N \in \N$ and any complex sequence $(\alpha_k)_{k\geq N}$ with finite support, the following reverse Bernstein inequality holds
\begin{equation*}
 C \lambda_N \, 
\|  \sum_{k=N}^{+\infty}  \alpha_k \phi_k   \|_q\leq \| \nabla \Big{(} \sum_{k=N}^{+\infty}  \alpha_k \phi_k \Big{)} \|_q  + \| \sqrt{W} \Big{(}  \sum_{k=N}^{+\infty}  \alpha_k \phi_k \Big{)}  \|_q. \eqno(RB_q)
\end{equation*}
\item For $q\in (1,2)$, if the Bernstein inequality $(B_p)$ holds for $p=\frac{q}{q-1}$, then the reserve inequality $(RB_q)$ holds.
\item For $q=1$, if  $(B_\infty)$ and $\inf\limits_{x\in M}V(x,1)>0$ are satisfied, then $(RB_1)$ holds.
\end{enumerate}
\end{theorem}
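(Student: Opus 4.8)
The plan is to prove the reverse Bernstein inequality $(RB_q)$ by a duality argument combined with the direct Bernstein estimate $(B_p)$ and the self-adjointness of $L$. The key observation is that for a finitely supported combination $u = \sum_{k\ge N}\alpha_k\phi_k$, the operator $L$ is bounded below by $\lambda_N^2$ on the spectral subspace spanned by the $\phi_k$ with $k \ge N$, so $\lambda_N^2 \|u\|_2^2 \le \langle Lu, u\rangle = \|\nabla u\|_2^2 + \|\sqrt{W} u\|_2^2$. This gives $(RB_2)$ immediately (indeed with the two terms under a sum of squares, which is equivalent up to constants to the sum of the terms). For general $q$ the idea is to pair $u$ with a suitable test function and move $L$ across the pairing, integrating by parts to convert $\langle Lu, v\rangle$ into $\int \nabla u\cdot\nabla v + \int W u\, v$, and then estimate the right-hand side by H\"older's inequality in terms of $\|\nabla u\|_q + \|\sqrt W u\|_q$ while the left-hand side is bounded below by $c\lambda_N^2\langle u,v\rangle$ for a cleverly chosen $v$.

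For part i), $q\in[2,\infty]$: I would build the test function by spectral calculus. Choose $\psi\in\CC_c^\infty$ with $\psi \equiv 1$ near a fixed point and set, roughly, $v = m(L)\bar u$ where $m$ is a Borel function with $m(\lambda^2)=\lambda^{-2}$ for $\lambda \ge \lambda_N$ smoothed appropriately (or more robustly, use $v = $ the spectral inverse $L^{-1}\bar u$ which makes sense since $u$ lives in the range of the spectral projection onto $[\lambda_N^2,\infty)$). Then $\langle u, \bar u\rangle = \langle Lv, \bar u\rangle = \langle \nabla v, \nabla \bar u\rangle + \langle \sqrt W v, \sqrt W \bar u\rangle$, so by H\"older (with exponents $q$ and $q'=q/(q-1)\in[1,2]$),
\begin{equation*}
\|u\|_q^2 \lesssim \|u\|_{q'}\|u\|_q \le \big(\|\nabla v\|_{q'}+\|\sqrt W v\|_{q'}\big)\big(\|\nabla u\|_q + \|\sqrt W u\|_q\big).
\end{equation*}
Now $v = L^{-1}\bar u$ is itself a finite combination of eigenfunctions, and one needs $\|\nabla v\|_{q'}+\|\sqrt W v\|_{q'} \lesssim \lambda_N^{-2}\|u\|_{q'}$; this is exactly a Bernstein-type estimate for $L^{-1/2}$ applied to $\bar u$, which follows from $(B_{q'})$ (valid since $q'\in[1,2]$, Theorem \ref{theo1-p}) together with the fact that $\|L^{-1/2}\bar u\|_{q'}\lesssim \lambda_N^{-1}\|\bar u\|_{q'}$ — the latter being a consequence of the gap in the spectrum and a square-function or semigroup integral representation $L^{-1/2} = c\int_0^\infty e^{-tL} t^{-1/2}\,dt$ applied to functions supported in $[\lambda_N^2,\infty)$. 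Combining and cancelling one factor of $\|u\|_q$ (after noting $\|u\|_{q'}\lesssim \|u\|_q$ via the ultracontractivity/finite-measure-type bound on the spectral subspace, or by running the argument symmetrically) yields $(RB_q)$.

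For part ii), $q\in(1,2)$: the argument is cleaner because now the dual exponent $p = q/(q-1) \in (2,\infty)$, and we are assuming $(B_p)$ holds. I would pair $u$ (in $L^q$) against an arbitrary $g\in L^p$ with $\|g\|_p\le 1$, write $\langle u, g\rangle = \lambda_N^{-2}\langle L u', g\rangle$ where $u'$ is the appropriate spectral modification (again using $L^{-1}$ on the spectral subspace), integrate by parts to get $\lesssim \lambda_N^{-2}(\|\nabla u\|_q + \|\sqrt W u\|_q)(\|\nabla(\text{stuff})\|_p + \dots)$ and control the $L^p$ factor by $(B_p)$. Taking the supremum over $g$ recovers $\|u\|_q$ on the left. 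Part iii), $q=1$, is the endpoint: here $p=\infty$, and the extra hypothesis $\inf_x V(x,1)>0$ is needed precisely to guarantee $(B_\infty)$ is meaningful and to handle the $L^1$–$L^\infty$ duality (the space of admissible test functions, and an $L^1$ bound on the spectral projections, which follows from $(G)$ and $\inf_x V(x,1)>0$). The main obstacle I anticipate is the spectral-gap estimate $\|L^{-1/2}\|_{L^{q'}\to L^{q'}}\lesssim \lambda_N^{-1}$ on the range of the spectral projection $\mathbf 1_{[\lambda_N^2,\infty)}(L)$ for $q'\ne 2$: one cannot use spectral calculus directly on $L^{q'}$, so this must be obtained via the semigroup integral representation together with the equivalence $(B_p)\Leftrightarrow(SB_p)$ and a careful choice of cutoff $\psi$ that is identically $1$ on the support of the relevant spectral measure — essentially reducing it to a semiclassical estimate of the type already established, but now with a negative power of $L$, which requires the $L^{q'}$-boundedness of $e^{-tL}$ (true, contraction) and summation of $\int_0^\infty t^{-1/2}e^{-t\lambda_N^2}\,dt = c\lambda_N^{-1}$.
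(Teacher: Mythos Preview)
Your duality-via-quadratic-form idea is the right philosophy, but the argument for part i) has a concrete gap. Pairing $u$ against $v=L^{-1}\bar u$ yields $\langle u,\bar u\rangle=\|u\|_2^2$, not $\|u\|_q$; the displayed line ``$\|u\|_q^2\lesssim\|u\|_{q'}\|u\|_q$'' does not follow, and the patch ``$\|u\|_{q'}\lesssim\|u\|_q$'' is in general false with a uniform constant when $\mu(M)=\infty$ (e.g.\ $M=\R^n$ with a confining potential). To recover $\|u\|_q$ by duality one must test against an \emph{arbitrary} $g\in L^{q'}$, but such $g$ is not in $R(\sqrt L)$, so the integration by parts $\langle u,g\rangle=\langle\nabla\cdot,\nabla\cdot\rangle+\ldots$ is unavailable. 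The paper dualizes at the level of $\sqrt L$ instead: Lemma~\ref{russ} uses the limit $e^{-tL}g\to\Pi_0 g$ in $L^p$ to replace $g$ by $g-e^{-tL}g\in R(\sqrt L)$, after which the quadratic-form identity and $(SB_p)$ control $\|\sqrt L\,\Psi(hL)u\|_q$. The passage back to $h^{-1/2}\|\Psi(hL)u\|_q$ is precisely your ``spectral-gap estimate,'' but your semigroup-integral justification fails on $L^{q'}$ for $q'\ne 2$: contractivity gives only $\|e^{-tL}u\|_{q'}\le\|u\|_{q'}$ with no exponential decay from the spectral localization, so the integral $\int_0^\infty t^{-1/2}\,dt$ diverges at infinity. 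The paper establishes this bound (Lemma~\ref{sqrtL}) via a Dixmier--Malliavin weak factorization (Corollary~\ref{dixm3}) reducing the symbol $x^{-1/2}\Psi(x)$ to multipliers in $\CC_c^\infty(0,\infty)$.

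You also do not isolate $q=\infty$ within part i), and this endpoint is genuinely harder. The duality route above breaks because $e^{-tL}g$ need not converge in $L^1(M)$ as $t\to\infty$; the paper instead replaces $L^\infty$--$L^1$ duality by $BMO_L$--$H^1_L$ duality, proving $\|\sqrt L u\|_{BMO_L}\lesssim\|\nabla u\|_\infty+\|\sqrt W u\|_\infty$ from the $H^1_L\to L^1$ boundedness of the Riesz transforms (Proposition~\ref{proH}) and then closing with a $BMO_L\to L^\infty$ multiplier theorem (Theorem~\ref{bmobmo}). This machinery is essential and entirely absent from your sketch.
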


To the best of our knowledge, the previous \textit{reverse Bernstein inequalities} are new even for trigonometric polynomials. In this case, the action of the gradient is similar to that of a multiplier and $(RB_q)$ holds for all $q\in [1,+\infty]$. Therefore,  for any $N\in \N^\star$ and any sequence $(\alpha_k)_{|k|\geq N}$ with finite support, one has
\begin{equation}\label{fourier}
\big\Vert \sum_{\substack{ k\in \Z\\ |k|\geq N}} \alpha_k e^{ik .} \big\Vert_{L^q(-\pi,\pi)}\leq \frac{C}{N}
\big\Vert \sum_{\substack{k\in \Z \\ |k|\geq  N}} k \alpha_k e^{ik .} \big\Vert_{L^q(-\pi,\pi)}.
\end{equation}

The proof of the previous theorem appeal to techniques from harmonic analysis which we summarize as follows. 
\begin{itemize}
\item The reverse Bernstein inequality $(RB_q)$ will be a consequence of the following semi-classical inequality
\begin{equation*}
 \frac{C}{ \sqrt{h}} \|  \Psi(hL) u \|_{q}\leq \| \nabla  u \|_{q} +  \| \sqrt{W}  u  \|_{q}   \eqno(SRB_q)
 \end{equation*}
for $\Psi \in \CC^\infty(0, \infty)$, $ \Psi \equiv 0$ near $0$ and $\Psi \equiv 1$ near $\infty$. 
\item We suitably apply the weak factorization \eqref{dixm} in order to reduce $(SRB_q)$ to the case where $\Psi$ has compact support in $(0,+\infty)$. 
\item For $q<+\infty$,  $(SRB_q)$ with  $\Psi \in C_c^\infty(0, \infty)$ will follow from a duality argument similar to that used by Bakry for the Riesz transform (see \cite[Proposition 2.1]{coulhon03} and \cite[Section 4]{bakry}).
\item The case $q=+\infty$ needs more work and will use in an essential way 
that the Riesz transform type operators $\nabla L^{-1/2}$ and $ \sqrt{W}L^{-1/2}$ are bounded from $H^1_L(M)$  to $L^1(M)$, where $H^1_L(M)$ is the Hardy space associated with $L$ (see Hofmann et al. \cite{HLMMY} and the references there).
Such a boundedness result is proved by  Duong, Ouhabaz and Yan \cite{DOY2006} in the case  $M= \R^n$ but their work extends to manifolds under the assumptions \eqref{doublin} and $(G)$.
Bakry's duality argument will be then used to obtain the inequality
\begin{equation}\label{bakry-int}
C \| \sqrt{L} u \|_{{BMO_L(M)}}\leq  \| \nabla u \|_\infty + \| \sqrt{W} u \|_\infty,
\end{equation}
where $BMO_L(M)$ is a $BMO$ space associated with $L$.
\item   In order to obtain $(SRB_\infty)$ from \eqref{bakry-int} we prove for appropriate functions $\Psi$
\begin{equation*}
\frac{C}{\sqrt{h}} \| \Psi(hL)u\|_{\infty} \leq \| \sqrt{L} u \|_{{BMO_L(M)}}.
\end{equation*}
This  inequality expresses the uniform boundedness of $\Phi(hL)$ from $BMO_L(M)$ into $L^\infty(M)$ where $\Phi(x) = \frac{\Psi(x)}{\sqrt{x}}$. Actually, 
such a boundedness is surprising since one usually proves  boundedness from $L^\infty$ into $BMO$ but not the converse. See Theorem \ref{bmobmo} and Lemma \ref{sqrtL} for more details. 
%The point here is that $\Phi$ belongs to a specific class of functions including $\CC_c^\infty(0,+\infty)$ (a particular result was obtained in \cite[Theorems 9.1 % and 9.3]{imek-aif} via the Littlewood-Paley theory on boundaryless compact manifolds). 
%We will reach a general class of functions, including $\Phi$, with appropriate decay at $0$ and $+\infty$, by a use of the Dixmier-Malliavin weak factorization \eqref{dixm}.
\end{itemize}

\bigskip
\noindent{\bf Notation.} Throughout this paper,  the Lebesgue spaces $L^p(M)$ are considered with respect to the Riemannian measure $\mu$. The norm in $L^p(M)$ is denoted by $\| . \|_p$. For a given bounded operator $T : L^p(M) \to L^q(M)$ we write $\| T \|_{p\to q}$ to denote its norm. 
The duality $L^p(M)-L^{p'}(M)$ is denoted $(f,g)$, where $p'$ is the dual exponent of $p$. 
For a smooth function $f$, we  write $\| \nabla f \|_p$ instead of $\|  |\nabla f | \|_p$.\\
We shall often use $C$ to denote  a positive constant which may vary from line to line. 

\bigskip
\noindent{\bf Acknowledgments.} The authors would like to thank Peng Chen and Lixin Yan for  their generous  help for the proof of the boundedness of $\psi(L)$ from $L^1(M)$ into $H^1_L(M)$  for $\psi \in \CC_c^\infty(0, \infty)$.\\
The research of R. Imekraz is partly supported by the ANR project ESSED ANR-18-CE40-0028.
The research of E.M. Ouhabaz is partly supported by the ANR project RAGE ANR-18-CE40-0012-01.

%%%%.   %%%%%.  %%%%%%% 

\section{Semi-classical Bernstein inequalities}\label{semi}
In this section, we reformulate the Bernstein inequality $(B_p)$ as a semi-classical inequality.  As mentioned in the introduction, this reformulation does not need $L$ to have a discrete spectrum.  
\begin{theorem}\label{theo1-psc} Assume the doubling condition \eqref{doublin} and the Gaussian upper bound $(G)$. For any $\psi \in \CC_c^\infty([0, \infty))$, there exists a positive constant $C = C_\psi$ such that for any $p\in [1,2]$ the following holds
 \begin{equation*}
\left\Vert \nabla \psi(hL) f \right\Vert_{p} + \left\Vert \sqrt{W} \psi(hL) f \right\Vert_{p} \leq  \frac{C}{\sqrt{h}} \left\Vert f\right\Vert_{p}, \qquad h>0,\quad f\in L^p(M). \eqno(SB_p)
 \end{equation*}
\end{theorem}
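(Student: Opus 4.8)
The plan is to reduce the semiclassical estimate to pointwise gradient and potential bounds on the complex heat semigroup, using the Fourier representation of $\psi(hL)$ sketched in the introduction. First I would exploit the scaling built into the problem: replacing $L$ by $hL$ and $t$ by $ht$ shows that it suffices to prove, for the operator $L$ itself, the two single-operator estimates
\begin{equation}\label{prop-grad-bounds}
\| \nabla e^{-zL} \|_{p\to p} \le \frac{C}{\sqrt{\Re z}}\,\left(\frac{|z|}{\Re z}\right)^{\kappa}, \qquad \| \sqrt{W}\, e^{-zL} \|_{p\to p} \le \frac{C}{\sqrt{\Re z}}\,\left(\frac{|z|}{\Re z}\right)^{\kappa}
\end{equation}
for all $z$ in the right half-plane $\C^+$ and some fixed $\kappa\ge 0$, together with the known heat kernel bound $\| e^{-zL}\|_{p\to p}\le C(|z|/\Re z)^{\kappa}$ for $z\in\C^+$ (the extension of $(G)$ to complex times from Carron--Coulhon--Ouhabaz \cite{maati2}). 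Writing $\psi(hL)=\int_\R \widehat\psi(\xi+2i)\, e^{-h(2-i\xi)L}\,\frac{d\xi}{\sqrt{2\pi}}$, we have $\Re\big(h(2-i\xi)\big)=2h$ and $|h(2-i\xi)|/(2h)=\sqrt{4+\xi^2}/2$, so applying \eqref{prop-grad-bounds} with $z=h(2-i\xi)$ gives $\|\nabla\psi(hL)f\|_p \le \frac{C}{\sqrt{h}}\|f\|_p \int_\R |\widehat\psi(\xi+2i)|\,(1+|\xi|)^{\kappa}\,d\xi$, and the integral converges because $\psi\in\CC_c^\infty$ forces $\widehat\psi(\cdot+2i)$ to be Schwartz-type decaying (it is the Fourier transform of the compactly supported smooth function $e^{2x}\psi(x)$, extended by $0$ to $x<0$ — here one uses $\psi$ supported in $[0,\infty)$ so that $e^{2x}\psi(x)\in\CC_c^\infty(\R)$). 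The same computation handles the $\sqrt{W}$ term.

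The heart of the matter is therefore establishing \eqref{prop-grad-bounds}. For $p=2$ this follows from the functional calculus together with weighted $L^2$ (Davies--Gaffney type) estimates: the identity $\sqrt{L}e^{-tL}=\frac{1}{\sqrt\pi}\int_0^\infty \frac{d}{ds}\big(e^{-(t+s)L}\big)\,\frac{ds}{\sqrt s}$ — or more directly the spectral estimate $\|\sqrt{L}e^{-zL}\|_{2\to2}\le (\Re z)^{-1/2}\sup_{\lambda\ge0}|\sqrt{\lambda\,\Re z}\,e^{-\lambda z}|\le C(\Re z)^{-1/2}$ — controls $\|\nabla e^{-zL}\|_{2\to2}+\|\sqrt W e^{-zL}\|_{2\to2}$ via the form identity $\|\nabla u\|_2^2+\|\sqrt W u\|_2^2=(Lu,u)=\|\sqrt L u\|_2^2$. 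To descend to $p\in[1,2)$ one interpolates; the endpoint $p=1$ is obtained by proving that $\nabla e^{-zL}$ and $\sqrt W e^{-zL}$ have kernels satisfying Gaussian-type upper bounds with the stated dependence on $|z|/\Re z$, which in turn rests on the weighted $L^2$ estimates of Grigor'yan \cite{grigo95} for $\nabla e^{-tL}$ (extended to include the $\sqrt W$ term, which is legitimate since $\int_M (|\nabla(e^{-tL}f)|^2+W|e^{-tL}f|^2)\,d\mu = -\frac{d}{dt}\tfrac12\|e^{-tL}f\|_2^2$ gives both terms simultaneously) combined with the finite-propagation/Gaussian-off-diagonal behaviour that $(G)$ and \eqref{doublin} furnish. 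Passing from weighted $L^2$ bounds on balls to an $L^1\to L^1$ (or $L^\infty$) bound for the kernel is the standard Davies' perturbation argument; the complex-time version requires keeping careful track of how the Gaussian constant degrades, producing the polynomial factor $(|z|/\Re z)^\kappa$.

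The main obstacle I expect is precisely this last point: obtaining the gradient and potential estimates for \emph{complex} times $z\in\C^+$ with the correct, integrable-against-$\widehat\psi(\cdot+2i)$ growth in $|z|/\Re z$, rather than merely for real $t>0$. For real $t$ everything is classical, but the contour representation of $\psi(hL)$ genuinely needs the half-plane version, and one must verify that the weighted-$L^2$ / Davies-perturbation machinery is analytic in $z$ and that the perturbation parameter can be chosen to absorb the imaginary part of $z$. Once \eqref{prop-grad-bounds} is in hand with any polynomial loss, the synthesis in the first paragraph is routine. I would also remark that $p=2$ alone already gives $(SB_2)$ directly from the spectral calculus without any contour argument, which is a useful sanity check and the starting point for the interpolation; the content of the theorem for $p<2$ is thus entirely in transferring these bounds off the diagonal via $(G)$, exactly as in the spectral multiplier theory of \cite{DOS} and \cite[Chapter 7]{maati}.
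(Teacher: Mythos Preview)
Your strategy is the paper's strategy: prove $(SB_2)$ directly via the form identity and spectral calculus, prove $(SB_1)$ via the Fourier representation $\psi(hL)=\int_\R \widehat{\psi_e}(\xi)\,e^{-(2-i\xi)hL}\,d\xi$ with $\psi_e(x)=e^{2x}\psi(x)$, and interpolate. Your handling of $p=2$ and of the decay of $\widehat{\psi_e}$ are exactly as in the paper.

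The one point where you diverge is the step you yourself flag as the obstacle: you aim to prove the complex-time estimates
\[
\|\nabla e^{-zL}\|_{1\to 1}+\|\sqrt{W}\,e^{-zL}\|_{1\to 1}\le \frac{C}{\sqrt{\Re z}}\Big(\frac{|z|}{\Re z}\Big)^{\kappa}
\]
directly, by pushing the Davies/Grigor'yan weighted-$L^2$ machinery through for complex $z$. This is not needed, and the paper avoids it by a simple factorization: it writes $\nabla e^{-(2-i\xi)hL}=\bigl(\nabla e^{-hL}\bigr)\,e^{-(1-i\xi)hL}$, so that the gradient hits only a \emph{real-time} semigroup (where the classical Grigor'yan weighted estimate plus Cauchy--Schwarz gives $\|\nabla e^{-hL}\|_{1\to 1}\le C/\sqrt{h}$), while the complex time is absorbed by the gradient-free factor, for which $\|e^{-(1-i\xi)hL}\|_{1\to 1}\le C_\varepsilon(1+\xi^2)^{n/4+\varepsilon}$ is available from \cite{maati2}. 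The $\sqrt{W}$ term is treated identically. Your complex-time gradient bound is then a corollary (the paper notes this in a remark, writing $z=\tfrac{\Re z}{2}+\bigl(\tfrac{\Re z}{2}+i\Im z\bigr)$), rather than something to be proved from scratch. So there is no genuine gap in your plan --- just replace your proposed direct attack on \eqref{prop-grad-bounds} by this splitting, and the argument closes immediately.
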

Before proving Theorem \ref{theo1-psc}, we state for a pedagogical reason a version   for any $p\in [1,+\infty]$ which shows that $(SB_p)$ is equivalent to the regularity property $(R_p)$. Its  proof  will be given in Section \ref{sec-Riesz}. 

\begin{theorem}\label{theo2-psc}
Suppose \eqref{doublin} and  $(G)$. The following statements are equivalent for any $p\in [1,+\infty]$.
\begin{enumerate}[i) ]
\item There exists a non-zero function $\psi_0 \in \CC_c^\infty([0, \infty))$ for which the  semi-classical Bernstein inequality $(SB_{p})$ holds, i.e., 
\begin{equation*} \left\Vert \nabla \psi_0(h L) \right\Vert_{p \to p}+\left\Vert \sqrt{W} \psi_0(h L) \right\Vert_{p \to p}  \leq  \frac{C}{\sqrt{h}} \qquad \mbox{for all }h>0.
\end{equation*}
\item For any $\psi\in \CC_c^\infty([0, \infty))$, the semi-classical Bernstein inequality $(SB_{p})$ holds, i.e., 
\begin{equation*}
\left\Vert \nabla \psi(hL) \right\Vert_{p \to p}+\left\Vert \sqrt{W} \psi(hL) \right\Vert_{p \to p}  \leq  \frac{C'}{\sqrt{h}}  \qquad \mbox{for all }h>0.
\end{equation*}
\item The regularity property $(R_p)$ holds, i.e., 
\begin{equation*}\left\Vert \nabla e^{-hL}\right\Vert_{p \to p} + \left\Vert \sqrt{W} e^{-hL}\right\Vert_{p \to p}\leq\frac{C''}{\sqrt{h}}  \qquad \mbox{for all }h>0. \eqno{(R_p)}
 \end{equation*}
\end{enumerate}
\end{theorem}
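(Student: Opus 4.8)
\textbf{Proof plan for Theorem \ref{theo2-psc}.}

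The plan is to establish the cycle of implications $(iii) \Rightarrow (ii) \Rightarrow (i) \Rightarrow (iii)$. The implication $(ii) \Rightarrow (i)$ is trivial (take any fixed non-zero $\psi_0 \in \CC_c^\infty([0,\infty))$, e.g.\ one with $\psi_0(0) \ne 0$). For the implication $(iii) \Rightarrow (ii)$, I would use the Fourier representation announced in the introduction: given $\psi \in \CC_c^\infty([0,\infty))$, extend it to a Schwartz function and write
\begin{equation*}
\psi(hL) = \int_{\R} \widehat{\psi}(\xi+2i)\, e^{-h(2-i\xi)L}\, \frac{d\xi}{\sqrt{2\pi}},
\end{equation*}
so that $\nabla \psi(hL) = \int_{\R} \widehat{\psi}(\xi+2i)\, \nabla e^{-h(1-i\xi)L}\, e^{-hL}\, \frac{d\xi}{\sqrt{2\pi}}$ after splitting $e^{-h(2-i\xi)L} = e^{-h(1-i\xi)L} e^{-hL}$. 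Here $(R_p)$ controls $\|\nabla e^{-hL}\|_{p\to p} \le C h^{-1/2}$, while the complex-time semigroup $e^{-h(1-i\xi)L}$ is bounded on $L^p$ with a polynomial-in-$\xi$ norm thanks to the Gaussian bounds for complex times from \cite{maati2}; since $\widehat{\psi}(\cdot + 2i)$ decays rapidly (Paley--Wiener, because $\psi$ has compact support), the integral converges and yields $\|\nabla \psi(hL)\|_{p\to p} \le C' h^{-1/2}$. The same argument, using the $\sqrt{W} e^{-hL}$ part of $(R_p)$, handles the potential term. (Alternatively, since Theorem \ref{theo1-psc} already gives $(SB_p)$ for $p \in [1,2]$ unconditionally, the content of $(iii) \Rightarrow (ii)$ is really for $p \in (2,\infty]$, where the same computation applies.)

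The substantive and new part is $(i) \Rightarrow (iii)$, i.e.\ deducing the heat-semigroup regularity $(R_p)$ from a single semi-classical Bernstein estimate. The idea is to synthesize the function $x \mapsto e^{-hx}$, or rather $\sqrt{x}\,e^{-hx}$, out of dilates of the fixed profile $\psi_0$. Concretely, I would like to write, for a suitable auxiliary function $\Theta$,
\begin{equation*}
e^{-hL} = \int_0^{\infty} \psi_0(tL)\, d\nu_h(t)
\end{equation*}
for some (signed) measure $\nu_h$, and then estimate $\|\nabla e^{-hL}\|_{p\to p} \le \int_0^\infty \|\nabla \psi_0(tL)\|_{p\to p}\, d|\nu_h|(t) \le C \int_0^\infty t^{-1/2}\, d|\nu_h|(t)$, hoping the last integral is $\lesssim h^{-1/2}$. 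The obstruction is that such a reproduction formula against a single fixed $\psi_0$ need not exist with an integrable kernel --- this is precisely where the weak factorization \eqref{dixm} (Lemma \ref{dixm2}) enters. Using Dixmier--Malliavin, one factors the target multiplier so that it becomes a convolution $\rho \ast g$ at the level of the Fourier/Mellin variable; translated back through the functional calculus, convolution becomes composition with the heat semigroup, which lets one absorb $\psi_0(tL)$ as one factor and keep an $L^1$ density as the other. So the scheme is: (a) reduce $(R_p)$ to its semi-classical form $\|\nabla e^{-hL} e^{-hL}\|_{p\to p} \lesssim h^{-1/2}$ or to a statement about $\Psi(hL)$ with $\Psi$ the profile $\sqrt{x}e^{-x}$; (b) apply Lemma \ref{dixm2} to write the relevant multiplier as a weak product; (c) use $(i)$ on the $\psi_0$-factor and the Gaussian/complex-time bounds of \cite{maati2} and the weighted $L^2$-estimates of Grigor'yan \cite{grigo95} on the remaining factor; (d) reassemble and integrate, tracking the $h$-dependence so that the final bound is exactly $C'' h^{-1/2}$.

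The main obstacle I anticipate is step (b)--(c): making the passage from the Dixmier--Malliavin factorization on $\R$ (or on $[0,\infty)$) to an operator identity compatible with the spectral calculus of $L$, and controlling the norms of the complex-time semigroup factors uniformly enough that the resulting $t$- or $\xi$-integral converges to something of size $h^{-1/2}$ and not worse. One must be careful that the factorization respects the scaling $L \mapsto hL$ homogeneously, so that the half-power of $h$ comes out cleanly; and one must verify that the "$L^1$ density" factor genuinely acts as an absolutely convergent average of heat operators, for which the analyticity of $(e^{-zL})$ on the right half-plane together with the bounds of \cite{maati2} is the natural input. The term with $\sqrt{W}$ is treated in parallel, using that $(i)$ includes the $\sqrt{W}\psi_0(hL)$ contribution. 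Once $(R_p)$ is in hand, the equivalence with the Bernstein inequality $(B_p)$ itself (Theorem \ref{theo2-p}) follows by combining with the equivalence of $(B_p)$ and $(SB_p)$ established in this section.
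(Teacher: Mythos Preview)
Your $(iii)\Rightarrow(ii)$ argument matches the paper's (this is exactly Remark~\ref{remrem}(c), coming from the proof of Theorem~\ref{theo1-psc}), and $(ii)\Rightarrow(i)$ is indeed trivial.

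For $(i)\Rightarrow(iii)$, however, there is a genuine gap in your scheme. You correctly identify that the Dixmier--Malliavin factorization is the key, but you misread what it delivers. Lemma~\ref{dixm2} (or Corollary~\ref{dixm3}) does \emph{not} let you write $\sqrt{x}\,e^{-x}$ as an integral of dilates of a \emph{prescribed} profile $\psi_0$; it produces two \emph{new} functions $\Theta_1,\Theta_2\in\CC_c^\infty(0,\infty)$ together with $L^1$ densities $F_1,F_2$. There is no reason the $\Theta_i$ should equal (or be built from) your given $\psi_0$, so you cannot invoke hypothesis~$(i)$ on them. The paper fills this gap with an intermediate step you are missing: starting from the single $\psi_0$ one first passes to $\psi_0^2\ge 0$, which is strictly positive on some interval $[a,b]$, and then uses a finite partition of unity built from dilates $\psi_0^2(K^{-n}\,\cdot)$ to show that $(SB_p)$ holds for \emph{every} $\psi\in\CC_c^\infty(0,\infty)$. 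Only after this does Dixmier--Malliavin enter, applied to $F(x)=\sqrt{x}\,e^{-x}$; whatever $\Theta_i\in\CC_c^\infty(0,\infty)$ it outputs, one now has $(SB_p)$ available for them, and the bound $\|\nabla e^{-hL}\|_{p\to p}\le Ch^{-1/2}$ follows by integrating against $|F_i|\in L^1$.

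A secondary point: in your step~(c) the Grigor'yan weighted $L^2$ estimates and the complex-time bounds from \cite{maati2} play no role in $(i)\Rightarrow(iii)$; those tools belong entirely to the $(iii)\Rightarrow(ii)$ direction. Once the partition-of-unity step and the factorization are in place, the assembly is just $\|\nabla\psi_i(hL/y)\|_{p\to p}\le C\sqrt{y/h}$ integrated against $|F_i(y)|\,y^{-1/2}\,dy$, with no further analytic input.
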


It is now worthwhile to recall the connection of the regularity $(R_p)$ and the Riesz transform. Firstly, the boundedness of the Riesz transforms $\nabla L^{-1/2}$ and $\sqrt{W}L^{-1/2}$ on $L^p(M)$, for $p\in (1,+\infty)$,  implies $(R_p)$ as follows 
\begin{equation}\label{riesz-cons}
 \| \nabla e^{-tL}  \|_{p\rightarrow p}  + \| \sqrt{W} e^{-tL}  \|_{p\rightarrow p}  \leq   C \| L^{1/2} e^{-t L }  \|_{p\rightarrow p}  \leq  \frac{C}{\sqrt{t}} ,
\end{equation}
where in the last inequality we used the analyticity of the semigroup on $L^p(M)$. 

For the other side of the picture let us consider the case where  $W = 0$ and the  heat kernel of $e^{t\Delta}$ satisfies the full Li-Yau  estimates 
\begin{equation}\label{liyau}
\frac{c}{V(y,\sqrt{t})} \exp\left(-C\frac{d(x,y)^2}{t} \right)  \leq p_t(x,y)\leq  \frac{C}{V(y,\sqrt{t})} \exp\left(-c\frac{d(x,y)^2}{t} \right).
\end{equation}
It is proved by  Auscher et al. \cite{auscher2004} (see also Bernicot and Frey \cite{BF16} for some extensions) that if $(R_q)$ holds  for some $q > 2$ then  the Riesz transform $\nabla (-\Delta)^{-1/2}$ is bounded on $L^p$ for all $p \in [2, q)$. As a consequence, manifolds for which the  Riesz transform is not bounded on $L^p$ are counterexamples for the semi-classical Bernstein inequalities. 
For example, conical manifolds studied by Li in \cite{li-conic} \footnote{ for which \eqref{liyau} is true but  the Riesz transform is unbounded for some $p\in (2,+\infty)$, see \cite{coulhon-conic}.} are counterexamples for $(SB_p)$ for $p > 2$. 
%We finish that remark by underlining that Theorem \ref{theo2-psc}, as well as the analysis of \cite{auscher2004}, is applicable to conic manifolds (which are not %complete).

\begin{proof}[Proof of Theorem \ref{theo1-psc}] 
By interpolation, $(SB_1)$ and $(SB_2)$ will imply $(SB_p)$ for $p \in (1, 2)$.

$\bullet$ We start with the simple  case $p=2$. Using the fact that the quadratic form of $L$ satisfies 
$$ (\sqrt{L} u , \sqrt{L}u) = \int_M | \nabla u |^2 d \mu + \int_M W |u|^2 d\mu \qquad \mbox{for all } u \in D(\sqrt{L}) $$
we obtain for $f \in L^2(M)$ 
\begin{equation*}
\left\Vert \nabla \psi(hL) f \right\Vert_{2}^2 + \left\Vert \sqrt{W} \psi(hL) f \right\Vert_{2}^2  = \left\Vert \sqrt{L} \psi(hL) f \right\Vert_{2}^2=\frac{1}{{h}} \left\Vert \sqrt{hL} \psi(hL) f \right\Vert_{2}^2.
\end{equation*}
By setting $\Psi(x)=\sqrt{x}\psi(x)$ and using the standard functional calculus for the self-adjoint operator $L$ on $L^2(M)$, we obtain  
\begin{equation*}
\left\Vert \nabla \psi(hL) f \right\Vert_{2}^2 + \left\Vert \sqrt{W} \psi(hL) f \right\Vert_{2}^2  \leq \frac{\| \Psi \|^2_\infty}{h}\left\Vert f\right\Vert_{2}^2.
\end{equation*}
This gives $(SB_2)$. 

$\bullet$ Next we prove $(SB_1)$. As mentioned in the introduction we use some ideas which already appeared in the proofs of spectral multiplier theorems (cf. \cite{DOS}).  Given 
$\psi \in \CC_c^\infty([0, \infty))$ and define $\psi_e (x) := \psi(x) e^{2x}$. We can extend $\psi$ as a $\CC_c^\infty$-function on $\R$  and denote by 
$\widehat{\psi_e}$ its Fourier transform. 
%As noted in the introduction, one may notice the relation $\widehat{\psi_e}(\xi)=\widehat{\psi}(\xi+2i)$ which has a sense since $\widehat{\psi}$ can be %extended as a holomorphic function in $\C$. However, the main thing we need in the sequel is that $\widehat{\psi_e}$ belongs to the Schwartz space. 
The Fourier inverse formula then allows to write 
\begin{equation*}
\psi(x)  =  \int_\R e^{-2x} \widehat{\psi_e}(\xi) e^{i x.\xi} \frac{d\xi}{\sqrt{2\pi}}.
\end{equation*}
Therefore
\begin{equation}\label{eq0}
\psi(hL)  = \int_{\R} \widehat{\psi_e}(\xi)  e^{-(2- i\xi) h L} \frac{d\xi}{\sqrt{2\pi}}.
\end{equation}
Thus,
\begin{equation}\label{eq1}
\nabla \psi(hL)  = \int_{\R} \widehat{\psi_e}(\xi)  \nabla e^{-hL} e^{-(1- i\xi) h L} d\xi,  \quad  \sqrt{W} \psi(hL)  = \int_{\R} \widehat{\psi_e}(\xi)  \sqrt{W} e^{-hL} e^{-(1- i\xi) h L} d\xi.
\end{equation}
Now we estimate  
$ \| \nabla e^{-hL} e^{-(1- i\xi) h L} \|_{1 \to 1}$ and  $ \| \sqrt{W} e^{-hL} e^{-(1- i\xi) h L} \|_{1 \to 1}$. 

Following an argument of Grigor'yan one proves that there exists a constant $c_0 > 0$ such that
\begin{equation}\label{grigo}
\int_M  | \nabla_x p_h(x,y) |^2 \exp\left(c_0\frac{d(x,y)^2}{h} \right) d\mu(x) \leq \frac{C}{h V(y,\sqrt{h})}.
\end{equation}
See \cite{grigo95} or \cite[Lemma 2.3]{coul99} for $L = -\Delta$ on manifolds,  \cite[p. Section 6.6] {maati} for elliptic operators on domains and \cite[Proposition 3.1]{DOY2006} for Schr\"odinger operators on $\R^n$. The proof given in these papers is based on integration by parts and it carries overs  to $ L = -\Delta + W$ on manifolds. 
On the other hand, from the doubling  condition one has easily
%From the proof of \cite[Proposition 7.1]{maati} or \cite[Theorem 4.3]{maati2}, we know that for any $c>0$, any $T>0$ and for almost every $y\in X$, the following inequality yields
\begin{equation}\label{unifo}
\int_{M} \exp\left(-c\frac{d(x,y)^2}{h} \right)d\mu(x)\leq C V(y,\sqrt{h})
\end{equation}
for all $h > 0$ and $y \in M$ (see e.g., \cite[Proposition 7.1]{maati} or \cite[Theorem 4.3]{maati2}). Hence, by the Cauchy-Schwarz inequality
\begin{eqnarray*}
\int_M |  \nabla_x p_{h}(x,y) | d\mu(x) &=& \int_M  | \nabla_x p_{h}(x,y)| \exp\left(\frac{c_0d(x,y)^2}{h} \right) 
\exp\left(-\frac{c_0d(x,y)^2}{h} \right) d\mu(x) \\
& \leq & \frac{C}{\sqrt{h}}. 
\end{eqnarray*}
This shows  that $\nabla e^{-hL}$ is bounded on $L^1(M)$ and 
\begin{equation}\label{bound-L1}
\| \nabla e^{-hL} \|_{1 \to 1} \le \frac{C}{\sqrt{h}}.
\end{equation}
The inequality \eqref{grigo} still holds when $\nabla$ is replaced by multiplication by $\sqrt{W}$ (see \cite{DOY2006}).
We can then argue as previously and obtain 
\begin{equation}\label{bound-L1-1}
\| \sqrt{W} e^{-hL} \|_{1 \to 1} \le \frac{C}{\sqrt{h}}.
\end{equation}
In order to continue, we recall the following bound (see \cite[Theorem 4.3]{maati2})
\begin{equation}\label{holog-L1}
\| e^{-z L} \|_{q \to q} \leq C_\varepsilon \left(\frac{|z|}{\Re(z)} \right)^{n\left\vert\frac{1}{2}-\frac{1}{q}\right\vert + \varepsilon},
\end{equation}
for any $\varepsilon > 0$ and  all $z\in \C^+$ and  $q\in [1,+\infty]$. 
Using \eqref{holog-L1} for $q=1$ it follows that 
\begin{eqnarray*}
\| \nabla \psi (hL) \|_{1 \to 1}  
&\le& C_\varepsilon\| \nabla e^{-hL} \|_{1 \to 1}  \int_{\R} | \widehat{\psi_e}(\xi) |   (1 + \xi^2)^{\frac{n}{4} + \varepsilon}, \\
\| \sqrt{W} \psi (hL) \|_{1 \to 1} & \leq & C_\varepsilon\| \sqrt{W} e^{-hL} \|_{1 \to 1}  \int_{\R} | \widehat{\psi_e}(\xi) |   (1 + \xi^2)^{\frac{n}{4} + \varepsilon}.
\end{eqnarray*}
From \eqref{bound-L1} and \eqref{bound-L1-1} we obtain
\begin{eqnarray*}
\| \nabla \psi (hL) \|_{1 \to 1} + \| \sqrt{W} \psi (hL) \|_{1 \to 1} 
&\le& \frac{C'_\varepsilon}{\sqrt{h}} \left(  \int_{\R} | \widehat{\psi_e}(\xi) |^2   (1 + \xi^2)^{\frac{n}{2} + 1+ 4 \varepsilon} d\xi \right)^{1/2}\\
&= & \frac{C'_\varepsilon}{\sqrt{h}}  \| \psi_e \|_{H^{n/2+1+4\varepsilon}(\R)} \\
&\le& \frac{C''_\varepsilon}{\sqrt{h}}  \| \psi \|_{H^{n/2+1+4\varepsilon}(\R)}. 
\end{eqnarray*}
This gives  $(SB_1)$.
\end{proof}
\begin{remark}\label{remrem} We mention  few other results that one can obtain using the previous proof. 
\begin{enumerate}[a) ] 
\item Similarly to \cite{DOS}, we note that \eqref{eq0} and \eqref{holog-L1} allow to recover the well-known fact : under the assumptions \eqref{doublin} and $(G)$, for any $\psi\in \CC_c^\infty([0,+\infty))$ and any $q\in [1,+\infty]$ the following inequality holds true
\begin{equation}\label{mult}
\sup\limits_{h>0}\left\Vert \psi(hL) \right\Vert_{q\rightarrow q}<+\infty.
\end{equation}
\item[ ] Pseudo-differential proofs of \eqref{mult} exist but need to consider a specific pseudo-differential framework (see \cite[Cor 2.2]{burq2004} and \cite[Theorem D.1]{imek-aif}).
\item By writing $z=\frac{\Re(z)}{2}+\left(\frac{\Re(z)}{2}+i\Im(z) \right)$, we easily deduce from \eqref{bound-L1}, \eqref{bound-L1-1} and \eqref{holog-L1} the inequality 
\begin{equation*}
\forall z\in \C^+ \quad \left\Vert \nabla e^{-zL}\right\Vert_{1\rightarrow 1} +\left\Vert \sqrt{W}e^{-zL}\right\Vert_{1\rightarrow 1}\leq \frac{C_\varepsilon}{\sqrt{\Re(z)}} \left( \frac{|z|}{\Re(z)}\right)^{\frac{n}{2}+\varepsilon},
\end{equation*}
which in turn could directly be used to handle \eqref{eq1}.
\item The previous proof  shows the implication iii) $\Rightarrow$ ii) of Theorem \ref{theo2-psc}.
\end{enumerate}
\end{remark}

In the next result we prove $L^p(M)-L^q(M)$ Bernstein inequalities.

\begin{theorem}\label{theo-LpLqsc} As above, we assume the doubling condition \eqref{doublin}.
We  assume that the heat kernel satisfies the Gaussian bound $(G)$ and there exists $m > 0$ such that 
 \begin{equation} \label{infty-encore}
p_t(x,x)  \leq  \frac{C}{t^{m/2}}, \quad t\in (0,1] \quad \mbox{a.e.  }x \in M.
 \end{equation}
Then for $\psi \in \CC_c^\infty([0, \infty))$, there exists a positive constant $C $ such that
for any  couple $(p,q)$ satisfying $1 \le p \le q \le 2 $, we have
 \begin{equation*}
\left\Vert \nabla \psi(hL) f \right\Vert_{q} + \left\Vert \sqrt{W} \psi(hL) f \right\Vert_{q} \leq C h^{-\left(\frac{1}{2} + \frac{m}{2} |\frac{1}{p} - \frac{1}{q} |\right) } \left\Vert f\right\Vert_{p},\quad h \in (0,1],\quad f\in L^p(M). \eqno(SB_{p,q})
 \end{equation*}
If  the regularity property  $(R_{q_0})$ is satisfied for some $q_0 > 2$, then $(SB_{p,q})$ holds for all $1\leq p\leq q \leq q_0$. 
\end{theorem}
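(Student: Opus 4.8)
The plan is to upgrade the proof of Theorem~\ref{theo1-psc} by inserting the $L^p$--$L^q$ smoothing of the heat semigroup produced by the on-diagonal bound \eqref{infty-encore}. First I would record two families of uniform operator bounds. On one hand, exactly as in the proof of Theorem~\ref{theo1-psc} (the bounds \eqref{bound-L1}, \eqref{bound-L1-1}, the $L^2$ identity $\|\nabla e^{-tL}f\|_2^2+\|\sqrt W e^{-tL}f\|_2^2=(Le^{-2tL}f,f)\le \tfrac{C}{t}\|f\|_2^2$, and interpolation), one has
\[
\|\nabla e^{-tL}\|_{r\to r}+\|\sqrt W e^{-tL}\|_{r\to r}\le \frac{C}{\sqrt t},\qquad t>0,\ \ r\in[1,2].
\]
On the other hand, since $|p_t(x,y)|\le p_t(x,x)^{1/2}p_t(y,y)^{1/2}$ (Cauchy--Schwarz and self-adjointness), \eqref{infty-encore} gives $\|e^{-tL}\|_{1\to\infty}\le C t^{-m/2}$ for $t\in(0,1]$, and interpolating this with the $L^r$-contractivity of the sub-Markovian semigroup $e^{-tL}$ yields the ultracontractive smoothing
\[
\|e^{-tL}\|_{p\to q}\le C\, t^{-\frac m2\left(\frac1p-\frac1q\right)},\qquad t\in(0,1],\ \ 1\le p\le q\le\infty
\]
(see e.g. \cite{varo85} or \cite[Section~2.4]{Davies89}).

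Next I would run the Fourier representation \eqref{eq0} from the proof of Theorem~\ref{theo1-psc}, namely $\psi(hL)=\int_\R\widehat{\psi_e}(\xi)\,e^{-(2-i\xi)hL}\,\frac{d\xi}{\sqrt{2\pi}}$ with $\psi_e(x)=\psi(x)e^{2x}\in\CC_c^\infty(\R)$, but split the real part of the exponent into two equal pieces: for $h\in(0,1]$,
\[
\nabla\psi(hL)=\int_\R\widehat{\psi_e}(\xi)\,\bigl(\nabla e^{-\frac h2 L}\bigr)\,e^{-\frac h2 L}\,e^{-(1-i\xi)hL}\,\frac{d\xi}{\sqrt{2\pi}},
\]
and similarly with $\nabla$ replaced by $\sqrt W$. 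For fixed $\xi$ I estimate the composition $L^p\to L^q$ factor by factor: $e^{-(1-i\xi)hL}\colon L^p\to L^p$ has norm $\le C_\varepsilon(1+\xi^2)^{\frac n2|\frac12-\frac1p|+\varepsilon}$ by \eqref{holog-L1} (since $|z|/\Re z=\sqrt{1+\xi^2}$ for $z=(1-i\xi)h$); then $e^{-\frac h2 L}\colon L^p\to L^q$ has norm $\le C h^{-\frac m2(\frac1p-\frac1q)}$ by the smoothing bound, as $h/2\le1$; then $\nabla e^{-\frac h2 L}\colon L^q\to L^q$ has norm $\le C h^{-1/2}$ by the first family, using $q\le2$. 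Multiplying these and integrating against $|\widehat{\psi_e}(\xi)|$, which is finite weighted by any power of $1+\xi^2$ since $\psi_e$ is Schwartz, gives
\[
\|\nabla\psi(hL)\|_{p\to q}+\|\sqrt W\psi(hL)\|_{p\to q}\le C_\psi\, h^{-\frac12-\frac m2\left(\frac1p-\frac1q\right)},\qquad h\in(0,1],
\]
which is $(SB_{p,q})$ for $1\le p\le q\le2$.

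For the last assertion, if $(R_{q_0})$ holds for some $q_0>2$ then Theorem~\ref{theo2-psc} gives $\|\nabla e^{-tL}\|_{q_0\to q_0}+\|\sqrt W e^{-tL}\|_{q_0\to q_0}\le C/\sqrt t$ for all $t>0$; interpolating with the $r=2$ bound extends the first family of bounds to all $r\in[1,q_0]$. The estimate above then goes through verbatim for any $1\le p\le q\le q_0$, the only place the range of $q$ was used being $\|\nabla e^{-\frac h2 L}\|_{q\to q}\le Ch^{-1/2}$, now valid for $q\le q_0$.

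I expect the difficulty to be bookkeeping rather than conceptual: one has to keep $\nabla$ (resp. $\sqrt W$) on a real-time factor $e^{-(h/2)L}$ so that the available $L^1$ and $L^2$ gradient bounds apply, reserve a second real-time factor $e^{-(h/2)L}$ for the $L^p\to L^q$ smoothing, and push all oscillation into the analyticity bound \eqref{holog-L1}; and one must respect that the ultracontractive smoothing holds only for times $\le1$, which is precisely why $(SB_{p,q})$ is restricted to $h\in(0,1]$.
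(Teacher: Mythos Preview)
Your proof is correct and follows essentially the same route as the paper. The only cosmetic difference is that the paper first records the combined bound $\|\nabla e^{-hL}\|_{p\to q}+\|\sqrt W e^{-hL}\|_{p\to q}\le C h^{-1/2-\frac m2|\frac1p-\frac1q|}$ (via the same splitting $e^{-hL}=e^{-\frac h2 L}e^{-\frac h2 L}$ and the same ultracontractive smoothing from \eqref{infty-encore}) and then inserts it into the Fourier representation \eqref{eq1}, whereas you perform the three-factor decomposition directly inside the integral; the estimates involved are identical. One tiny remark: in the last paragraph you need not invoke Theorem~\ref{theo2-psc}, since $(R_{q_0})$ \emph{is} the bound $\|\nabla e^{-tL}\|_{q_0\to q_0}+\|\sqrt W e^{-tL}\|_{q_0\to q_0}\le C t^{-1/2}$.
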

\begin{proof}
We shall use the same strategy as in Theorem \ref{theo1-psc}.  Firstly, we prove the estimate
\begin{equation}\label{aprouv}\| \nabla e^{-hL}  \|_{p \to q} + \| \sqrt{W}  e^{-hL}  \|_{p \to q} \leq Ch^{-\frac{1}{2} - \frac{m}{2} |\frac{1}{p} - \frac{1}{q} |}, \qquad h\in (0,1].
\end{equation}
We recall a classical fact that the semigroup identity and the symmetry of the heat kernel imply 
\begin{eqnarray*}
|p_h(x,y)|^2 & =& \left\vert \int_{M} p_{h/2}(x,z) p_{h/2}(z,y) d\mu(z)     \right\vert^2  \\
& \leq & \int_{M} p_{h/2}(x,z)^2 d\mu(z) \int_{M} p_{h/2}(z,y)^2 d\mu(z) = p_{h}(x,x)p_{h}(y,y) \le \frac{C}{h^{m}}.
\end{eqnarray*}
This gives 
\begin{equation*}
\| e^{-hL} \|_{1 \to \infty} \le  \frac{C}{h^{m/2}}, \ h \in (0, 1]
\end{equation*}
and hence by interpolation,  
\begin{equation*}
\| e^{-h L} \|_{p \to q} \le C h^{-\frac{m}{2} \left\vert \frac{1}{p}-\frac{1}{q}\right\vert      }.
\end{equation*}
%Moreover, by the semigroup property, we can write
%$$\nabla e^{-hL} = \nabla e^{-\frac{h}{2} L}  e^{-\frac{h}{2} L} $$  
Thus,
\begin{eqnarray*}
\| \nabla e^{-hL}  \|_{p \to q} + \| \sqrt{W}  e^{-hL}  \|_{p \to q} &\le& \left(  \| \nabla e^{-\frac{h}{2} L}   \|_{q \to q} + \| \sqrt{W}  e^{-\frac{h}{2}L}  \|_{q \to q}\right) \| e^{- \frac{h}{2}L}  \|_{p \to q}\\
&\le&  C\left(  \| \nabla e^{-\frac{h}{2} L}   \|_{q \to q} + \| \sqrt{W}  e^{-\frac{h}{2}L}  \|_{q \to q}\right)h^{-\frac{m}{2}\left\vert \frac{1}{p}-\frac{1}{q}\right\vert}.
\end{eqnarray*}
For $2 \le p \le q \le q_0$, the conclusion comes easily from the additional assumption $(R_{q_0})$.\\
Using \eqref{eq1} and \eqref{holog-L1}, we obtain
\begin{equation*}
\|\nabla \psi(hL)\|_{p\to q}+
\|\sqrt{W} \psi(hL)\|_{p\to q} \leq (\|\nabla e^{-hL}\|_{p\to q}+
\|\sqrt{W} e^{-hL}\|_{p\to q})\int_{\R} |\widehat{\psi_e}(\xi)|(1+\xi^2)^{\frac{n}{2}\left\vert\frac{1}{2}-\frac{1}{q} \right\vert +\frac{\varepsilon}{2}}d\xi.
\end{equation*}
We then argue as in  Theorem \ref{theo1-psc}.
\end{proof}

\section{Discrete and semi-classical Bernstein inequalities} \label{derivation}

In this section, we assume the doubling condition \eqref{doublin}, the Gaussian bound $(G)$ and that $L$ has discrete spectrum with the notation of  \eqref{dis}. We show the equivalence of the discrete Bernstein inequality $(B_p)$ stated in Theorems \ref{theo1-p} and \ref{theo2-p} and the semi-classical Bernstein inequality $(SB_p)$ stated in Theorems \ref{theo1-psc} and \ref{theo2-psc}.
Since Theorem \ref{theo1-psc} has been proved in the previous section, we obtain Theorem \ref{theo1-p}. We also derive Theorem \ref{theo-LpLq} from Theorem \ref{theo-LpLqsc}. Similarly, the present section shows that Theorem \ref{theo2-psc} implies  Theorem \ref{theo2-p} (we recall that Theorem \ref{theo2-psc} will be proved in Section \ref{sec-Riesz}).

\begin{proposition}\label{proequiv}
Let $p \in [1, \infty]$. Then the discrete Bernstein inequality $(B_p)$ and the semi-classical one $(SB_p)$ (with some non-trivial $\psi \in \CC_c^\infty([0, \infty))$) are equivalent.
\end{proposition}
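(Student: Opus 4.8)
The plan is to prove the two implications separately, exploiting the fact that the family $\{\phi_k\}_{k\le N}$ spans the same space as various spectral projections/functions of $L$.

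First I would show $(SB_p)\Rightarrow(B_p)$. Let $f=\sum_{k=0}^N\alpha_k\phi_k$. The key observation is that $f$ lies in the range of the spectral projection onto $[0,\lambda_N^2]$, so if we fix $\psi\in\CC_c^\infty([0,\infty))$ with $\psi\equiv 1$ on $[0,1]$, then $\psi(hL)f=f$ whenever $h\lambda_N^2\le 1$, i.e. for $h=\lambda_N^{-2}$ (here I use that $\psi(hL)\phi_k=\psi(h\lambda_k^2)\phi_k=\phi_k$ for all $k\le N$ since $h\lambda_k^2\le h\lambda_N^2\le 1$). Applying $(SB_p)$ with this choice of $h$ gives
\begin{equation*}
\|\nabla f\|_p+\|\sqrt W f\|_p=\|\nabla\psi(hL)f\|_p+\|\sqrt W\psi(hL)f\|_p\le\frac{C}{\sqrt h}\|f\|_p=C\lambda_N\|f\|_p,
\end{equation*}
which is exactly $(B_p)$. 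One small point: $(SB_p)$ as stated holds for a \emph{given} $\psi$; but by Theorem \ref{theo1-psc} (for $p\le 2$) or by the hypothesis that $(SB_p)$ holds for \emph{some} non-trivial $\psi$ together with the argument below, one can arrange to have such a cutoff — more cleanly, I would just note that it suffices to run the converse direction first and then invoke it, or observe that the stated $(SB_p)$ for one $\psi$ already propagates. Actually the cleanest route: assume $(SB_p)$ holds for the specific $\psi$ that is $\equiv 1$ near $0$; the general equivalence among different $\psi$'s is handled by Theorem \ref{theo2-psc}, which we are allowed to use.

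Next, $(B_p)\Rightarrow(SB_p)$. Fix $\psi\in\CC_c^\infty([0,\infty))$, say $\support\psi\subset[0,R]$, and fix $h>0$; I want to bound $\|\nabla\psi(hL)f\|_p+\|\sqrt W\psi(hL)f\|_p$. Write $g:=\psi(hL)f=\sum_k\psi(h\lambda_k^2)\alpha_k\phi_k$ where $f=\sum_k\alpha_k\phi_k$; since $\psi$ is supported in $[0,R]$, only indices $k$ with $\lambda_k^2\le R/h$ contribute, so $g=\sum_{k:\lambda_k\le\sqrt{R/h}}\beta_k\phi_k$ is a finite combination of eigenfunctions with $\lambda_k\le N_h:=\sqrt{R/h}$. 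Applying $(B_p)$ to $g$ gives $\|\nabla g\|_p+\|\sqrt W g\|_p\le C N_h\|g\|_p=C\sqrt{R}\,h^{-1/2}\|g\|_p$. It remains to absorb $\|g\|_p=\|\psi(hL)f\|_p$ into $\|f\|_p$, which is precisely the uniform spectral multiplier bound $\sup_{h>0}\|\psi(hL)\|_{p\to p}<\infty$ recorded in \eqref{mult} of Remark \ref{remrem} (valid under \eqref{doublin} and $(G)$ for all $p\in[1,\infty]$). Combining, $\|\nabla\psi(hL)f\|_p+\|\sqrt W\psi(hL)f\|_p\le C'h^{-1/2}\|f\|_p$, which is $(SB_p)$.

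The main obstacle — really the only subtlety — is the bookkeeping around \emph{which} $\psi$ one gets $(SB_p)$ for. The direction $(B_p)\Rightarrow(SB_p)$ delivers $(SB_p)$ for \emph{every} $\psi\in\CC_c^\infty([0,\infty))$, so no issue there. For $(SB_p)\Rightarrow(B_p)$ one wants a $\psi$ equal to $1$ near the origin; if one is only handed $(SB_p)$ for some non-trivial $\psi_0$ that may vanish near $0$, one needs Theorem \ref{theo2-psc} (items i)$\Leftrightarrow$ii)) to upgrade to an admissible cutoff. Since Theorem \ref{theo2-psc} is available in the excerpt, I would simply cite it; the rest is the two short computations above.
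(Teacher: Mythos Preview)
Your proposal is correct and follows essentially the same route as the paper's proof: both directions hinge on choosing a cutoff $\psi$ equal to $1$ near the origin, the identity $\psi(hL)f=f$ for spectrally localized $f$, the uniform multiplier bound \eqref{mult}, and Theorem~\ref{theo2-psc} to pass between different choices of $\psi$. The only point the paper makes explicit that you leave implicit is the reduction to $f\in L^p(M)\cap L^2(M)$ by density in the direction $(B_p)\Rightarrow(SB_p)$, so that the eigenfunction expansion of $\psi(hL)f$ is justified.
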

\begin{proof} We start with $(SB_p) \Rightarrow (B_p)$. By Theorem \ref{theo2-psc}  if  $(SB_p)$ is satisfied by one non-trivial function in $ \psi \in \CC_c^\infty([0, \infty))$ then it holds for any other function in the same space. Now we choose $\psi$ such that 
\begin{equation}\label{defipsi}
\psi(x)=\left\{ \begin{array}{rcl}  1 & \mbox{for} & x\in [0,1]\\
0 & \mbox{for} & x\geq 2.      \end{array}   \right.
\end{equation}
Set  $f=\sum\limits_{k=0}^{N} \alpha_k \phi_k$ with $(\alpha_0,\dots,\alpha_N)\in \C^{N+1}$ and $N \ge 1$.  We   
take   $h = \frac{1}{\lambda^2_N}$ and write
\begin{equation*}
\psi(hL) f = \sum_{k=0}^N \alpha_k \psi(hL) \phi_k= \sum_{k=0}^N \alpha_k \psi\left(\frac{ \lambda^2_k}{\lambda^2_N}\right) \phi_k= \sum_{k=0}^N \alpha_k  \phi_k.
\end{equation*}
We apply  $(SB_p)$ and obtain immediately 
\begin{eqnarray*}
 % \| \nabla \psi(hL)f \|_{p}+\|\sqrt{W} \psi(hL)f\|_{p}       & \leq & \frac{C}{\sqrt{h}} \|f\|_{p}\\
 \| \nabla \Big{(}  \sum_{k=0}^N  \alpha_k \phi_k \Big{)}  \|_p  + \| \sqrt{W} \Big{(}  \sum_{k=0}^N  \alpha_k \phi_k \Big{)}  \|_p & \le & C \lambda_N \, 
\|  \Big{(}  \sum_{k=0}^N  \alpha_k \phi_k \Big{)}  \|_p.
\end{eqnarray*}
This proves $(B_p)$.

We prove the converse $(B_p) \Rightarrow (SB_p)$.  
We merely consider the case $f\in L^p(M) \cap L^2(M)$, then the general case $f\in L^p(M)$ follows by density. The definition \eqref{defipsi} of $\psi$ ensures that $\psi(hL)f$ is spectrally localized on $[0,2/h]$ with respect to $L$. So there are $N\in \N^\star$ and coefficients $\alpha_0,\dots,\alpha_N$ such that 
\begin{equation*}
\psi(hL) f = \sum_{k=0}^N \alpha_k \phi_k \qquad \mbox{with}\quad \lambda_N^2 \leq \frac{2}{h}.
\end{equation*}
Applying $(B_p)$ yields, 
\begin{equation*}
 \| \nabla \psi(hL)f \|_{p}+\|\sqrt{W} \psi(hL)f\|_{p}\leq C\lambda_N \|\psi(hL) f\|_{p}\leq \frac{C'}{\sqrt{h}} \|\psi(hL) f\|_{p}.
\end{equation*}
Since the multiplier $\psi(hL)$ is  bounded uniformly in $h > 0$ on  $L^p(M)$ (see \eqref{mult} or \cite{DOS}) it follows that  $\|\psi(hL) f\|_{p}\leq C \|f\|_{p}$.  This gives  the semi-classical inequality $(SB_p)$ with $\psi$ as in \eqref{defipsi}. The same estimate holds for  all $\psi \in \CC_c^\infty([0, \infty))$ by Theorem \ref{theo2-psc}.

\end{proof}

\begin{proof}[Proof of Theorem \ref{theo-LpLq}] By  Theorem \ref{theo-LpLqsc} we have for every $\psi \in \CC_c^\infty([0, \infty))$
\begin{equation*}
\left\Vert \nabla \psi(hL) f \right\Vert_{q} + \left\Vert \sqrt{W} \psi(hL) f \right\Vert_{q} \leq C h^{-\left(\frac{1}{2} + \frac{m}{2} |\frac{1}{p} - \frac{1}{q} |\right) } \left\Vert f\right\Vert_{p}, \quad h\in (0,1],\quad f\in L^p(M) \eqno(SB_{p,q})
\end{equation*}
As in the previous proof, we chose $\psi$ as in \eqref{defipsi} and we  take  $h=\frac{1}{\lambda_N^2}$ and $f=\sum\limits_{k=0}^N \alpha_k \phi_k$ to obtain 
$(B_{p,q})$ for $\lambda_N \ge 1$. 
For small $\lambda_N$, we argue as follows. 
%%%%%%%%%
Let $N_0$ be the smallest  integer such that  $\lambda_{N_0}\geq 1$ and let  $ 1 \le N < N_0$. We first have the rough bound
\begin{equation}\label{CN1}
\|\nabla \sum_{k=0}^N \alpha_k \phi_k\|_{q}+\|\sqrt{W}\sum_{k=0}^N \alpha_k \phi_k \|_q \leq C_{N,1}\sup\limits_{0\leq k \leq N} |\alpha_k| ,
\end{equation}
with $C_{N,1}:=\sum\limits_{k=0}^N \|\nabla \phi_k\|_q+\|\sqrt{W}\phi_k\|_q $.
By  the equivalence of the following two norms on $\C^{N+1}$
\begin{equation*}
(\alpha_0,\dots,\alpha_N)\mapsto         \sup\limits_{0\leq k \leq N} |\alpha_k|  \qquad \mbox{et}\qquad    
(\alpha_0,\dots,\alpha_N)\mapsto     \|\sum_{k=0}^N \alpha_k \phi_k \|_{p}      \end{equation*}
one has  for a suitable constant $C_{N,2}>0$ the following inequality
\begin{equation*}
\|\nabla \sum_{k=0}^N \alpha_k \phi_k\|_{q}+\|\sqrt{W}\sum_{k=0}^N \alpha_k \phi_k \|_q \leq C_{N,2}\|\sum_{k=0}^N \alpha_k \phi_k \|_{p}.
\end{equation*}
Since $\lambda_N\geq \lambda_1>0$ we may set
\begin{equation*}
C:=\sup\limits_{1\leq N<N_0}\frac{C_{N,2}}{\lambda_N^{1+m\left\vert\frac{1}{p}-\frac{1}{q} \right\vert} }
\end{equation*}
so that we have 
\begin{equation*}
\|\nabla \sum_{k=0}^N \alpha_k \phi_k\|_{q}+\|\sqrt{W}\sum_{k=0}^N \alpha_k \phi_k \|_q \leq C\lambda_N^{1+m\left\vert\frac{1}{p}-\frac{1}{q} \right\vert}\|\sum_{k=0}^N \alpha_k \phi_k \|_{p}.
\end{equation*}
This proves $(B_{p,q})$. 
\end{proof}

\section{Dixmier-Malliavin weak factorization}\label{dixmier-M}

 This section is devoted to state a result which is in the spirit of the paper \cite{dixmi} by Dixmier and Malliavin.
More precisely, \cite{dixmi} studies the possibility of decomposing a function  in a Fr\'echet functional space into a finite sum of convolutions under the action of a Lie group $G$.
The statements there are however written in the language of the theory of representations on Lie groups whereas we are interested in the particular case $G=\R$, only. Hence
instead of using the whole machinery of the paper \cite{dixmi} (more precisely its Theorem 3.3), we give a relatively simpler and direct proof
based on  Lemma 2.5 from \cite{dixmi}. Our proof gives  two convolutions in the factoraization. 
\begin{lemma}\label{dix}[Dixmier-Malliavin]
For any positive sequence $(\beta_n)_{n\in \N}$, there exist a positive sequence $(\alpha_n)_{n\in \N}$ and two functions $\rho_1$ and $\rho_2$ belonging to $\CC_c^\infty(\R)$ satisfying
\begin{enumerate}[i) ]
\item $\alpha_n\leq \beta_n$ pour tout $n\geq 1$
\item for any function $F\in \CC^\infty(\R)$ the following limit holds 
\begin{equation*}
\lim\limits_{p\rightarrow +\infty} \langle \rho_1 \star \sum_{n=0}^p (-1)^n \alpha_n \delta^{(2n)},F \rangle = F(0)+\int_{\R} F(x)\rho_2(x)dx.
\end{equation*}
\end{enumerate}
\end{lemma}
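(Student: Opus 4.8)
The goal is to realize, via convolution, the operator "$F \mapsto F(0) + \int F \rho_2$" as a single convolution $\rho_1 \star (\text{something})$ acting on $F$. The something is the formal series $\sum_n (-1)^n \alpha_n \delta^{(2n)}$, which one should think of as a formal "inverse" of a chosen convolver. Concretely, I would start from a fixed nonnegative even bump $\chi \in \CC_c^\infty(\R)$ with $\int \chi = 1$, and think of convolution by $\chi$; its "symbol" in a formal sense is an even power series $\sum_k c_k \xi^{2k}$ with $c_0 = 1$, coming from the Taylor expansion of $\widehat{\chi}$. Formally inverting this power series produces another even series $\sum_n d_n \xi^{2n}$ with $d_0 = 1$, and the partial sums of $\sum_n (-1)^n d_n \delta^{(2n)}$ convolved with $\chi$ telescope (in the sense of formal series) towards $\delta$. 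The key input is Lemma 2.5 of Dixmier–Malliavin, which is exactly the statement that allows one to damp the (generally divergent) coefficients $d_n$ by a rapidly decaying positive sequence so that, after this damping, everything converges: it produces $\rho_1 \in \CC_c^\infty$ (a suitable modification of $\chi$) and a positive sequence $(\alpha_n)$ with $\alpha_n \le \beta_n$ such that $\rho_1 \star \sum_{n=0}^p (-1)^n \alpha_n \delta^{(2n)}$ converges, as $p \to \infty$, in the sense of distributions, to $\delta_0 + \rho_2$ for some $\rho_2 \in \CC_c^\infty(\R)$.

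So the steps, in order, are: (1) Fix the bump $\chi$ and write down the formal power-series identity $\widehat{\chi}(\xi) \cdot \big(\sum_n d_n \xi^{2n}\big) = 1$ defining the $d_n$; record that $d_0 = 1$ and that the $d_n$ may grow but are controlled by the analyticity/decay of $\widehat\chi$. (2) Pair against a test function $F \in \CC^\infty(\R)$: for each fixed $p$, $\langle \rho_1 \star \sum_{n=0}^p (-1)^n \alpha_n \delta^{(2n)}, F\rangle = \langle \sum_{n=0}^p (-1)^n \alpha_n \delta^{(2n)}, \widetilde{\rho_1}\star F\rangle$ where $\widetilde{\rho_1}(x) = \rho_1(-x)$; this is a finite sum $\sum_{n=0}^p \alpha_n (\widetilde{\rho_1}\star F)^{(2n)}(0)$, so everything in sight is a genuine convergent quantity for each $p$. (3) Invoke Lemma 2.5 of \cite{dixmi} to choose $\alpha_n = d_n \cdot \epsilon_n$ (with $\epsilon_n > 0$ chosen small enough that $\alpha_n \le \beta_n$ and that the series converges appropriately) together with the associated $\rho_1$; the output of that lemma is precisely that the limit exists and equals $\delta_0 + \rho_2$, i.e. $\langle \delta_0 + \rho_2, F\rangle = F(0) + \int_\R F(x)\rho_2(x)\,dx$. (4) Since convolution is symmetric in the relevant sense, $\langle \rho_1 \star \mu_p, F\rangle = \langle \rho_2 \star F \,|\, \dots\rangle$ — rather, just transcribe that the limit functional is $F \mapsto F(0) + \int F\rho_2$, which is the claim.

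I would be careful about two technical points. First, the interchange of "apply Lemma 2.5" and "pair with $F$": Lemma 2.5 is a statement about convergence of the series of distributions $\rho_1 \star \sum (-1)^n\alpha_n\delta^{(2n)}$ in $\mathcal{D}'$, and one should make sure this $\mathcal{D}'$-convergence is exactly what is needed to conclude the stated pointwise limit of pairings against every $F \in \CC^\infty$; this is immediate if $F$ has compact support and needs a mild remark (the $\delta^{(2n)}$ are supported at $0$, $\rho_1$ has compact support, so $\rho_1 \star \mu_p$ is compactly supported uniformly in $p$) to handle general $F \in \CC^\infty(\R)$. Second, the constraint $\alpha_n \le \beta_n$ and positivity of $\alpha_n$: this forces one to damp the $d_n$, and one must check that the damping does not destroy the telescoping that yields $\delta_0 + \rho_2$ — this is exactly the content and the subtlety that Dixmier–Malliavin's Lemma 2.5 is engineered to handle, so invoking it correctly (matching hypotheses: the growth bound on $d_n$) is the main obstacle. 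Everything else is bookkeeping with Fourier transforms of compactly supported smooth functions.
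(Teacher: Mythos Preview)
The paper does not give its own proof of this lemma. It is stated as a direct quotation from Dixmier--Malliavin (hence the label ``[Dixmier--Malliavin]'' and the absence of a proof environment following it); in the paper's own words a few lines earlier, it is ``based on Lemma 2.5 from \cite{dixmi}''. In other words, Lemma~\ref{dix} \emph{is} (a restatement of) Lemma~2.5 of \cite{dixmi}, and the paper simply cites it. So there is no paper proof to compare your proposal against.

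That said, your proposal has a genuine gap regardless of comparison: it is circular. Your step~(3) reads ``Invoke Lemma 2.5 of \cite{dixmi} to choose $\alpha_n$\dots together with the associated $\rho_1$; the output of that lemma is precisely that the limit exists and equals $\delta_0+\rho_2$.'' But that output is exactly the conclusion of the lemma you are trying to prove. You cannot establish Lemma~\ref{dix} by invoking Lemma~2.5 of \cite{dixmi} when the former is a transcription of the latter. Your steps (1), (2), (4) are heuristic setup and bookkeeping; the only substantive step is (3), and it begs the question.

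If you actually want to reconstruct the Dixmier--Malliavin argument from scratch, the real work is the inductive construction of $(\alpha_n)$ and $\rho_1$: one chooses the $\alpha_n$ small enough (in particular $\alpha_n\le\beta_n$) so that the series $\sum_n(-1)^n\alpha_n\rho_1^{(2n)}$ converges in $\CC_c^\infty(\R)$ to some function, and one arranges $\rho_1$ so that this limit equals $\delta_0+\rho_2$ in the distributional sense. Your Fourier heuristic (formal inversion of $\widehat\chi$) points in the right direction, but the delicate point---that damping the formal inverse coefficients still leaves a remainder of the form $\delta_0+(\text{something in }\CC_c^\infty)$---is precisely what requires a careful construction, and you have not supplied one.
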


We then have the following decomposition lemma.

\begin{lemma}\label{dixm2}
Let $f\in L^1(\R)\cap \CC^\infty(\R)$ be a function whose all derivatives belong to $L^1(\R)$.
Then there exist $g\in L^1(\R)$, $\rho_1\in \CC_c^\infty(\R)$ and $\rho_2\in \CC_c^\infty(\R)$ such that the following (weak) factorization holds
\begin{equation*}
f=\rho_1\star f+\rho_2\star g.
\end{equation*}
\end{lemma}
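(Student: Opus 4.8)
The plan is to deduce Lemma \ref{dixm2} from the Dixmier--Malliavin Lemma \ref{dix} by a careful choice of the control sequence $(\beta_n)_{n\in\N}$ depending on $f$. First I would recall that a smooth $L^1$ function all of whose derivatives are in $L^1$ has a Fourier transform $\widehat f$ that decays faster than any polynomial; equivalently, writing $f = \rho_1\star f$ would amount, on the Fourier side, to multiplying $\widehat f$ by $\widehat{\rho_1}$, so we want $\widehat{\rho_1}$ close to $1$ on the (non-compact) ``effective support'' of $\widehat f$ and the defect to be absorbed by a second convolution $\rho_2 \star g$ with $g\in L^1$. The mechanism that makes this work is precisely the distributional identity in part ii) of Lemma \ref{dix}: applied formally to $F = f$ (or rather tested against $f$), it says $\rho_1 \star \sum_{n} (-1)^n \alpha_n \delta^{(2n)}$ acts on $f$ as $\delta_0 + \rho_2$, i.e. $\rho_1 \star \bigl(\sum_n \alpha_n f^{(2n)}\bigr) = f + \rho_2 \star f$ in a suitable limiting sense, up to a sign bookkeeping on the even derivatives.

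The key steps, in order, would be: (1) Choose $\beta_n := 2^{-n}\bigl(1 + \|f^{(2n)}\|_1\bigr)^{-1}$, or any positive sequence with $\sum_n \beta_n \|f^{(2n)}\|_1 < \infty$; feed this into Lemma \ref{dix} to obtain $(\alpha_n)$ with $\alpha_n \le \beta_n$ and $\rho_1,\rho_2 \in \CC_c^\infty(\R)$. (2) Define $g := \sum_{n=0}^\infty (-1)^n \alpha_n f^{(2n)}$; since $\sum_n \alpha_n \|f^{(2n)}\|_1 \le \sum_n \beta_n\|f^{(2n)}\|_1 < \infty$, this series converges absolutely in $L^1(\R)$, so $g \in L^1(\R)$, and moreover the partial sums $g_p := \sum_{n=0}^p (-1)^n \alpha_n f^{(2n)}$ converge to $g$ in $L^1$. (3) Convolve: $\rho_1 \star g_p \to \rho_1 \star g$ in $L^1$ (Young's inequality, since $\rho_1\in\CC_c^\infty\subset L^1$), hence also in the sense of distributions. (4) Identify the limit using part ii) of Lemma \ref{dix}: for every test function $\varphi$, $\langle \rho_1 \star g_p, \varphi\rangle = \langle \rho_1 \star \sum_{n=0}^p (-1)^n\alpha_n \delta^{(2n)} \star f, \varphi\rangle = \langle \rho_1 \star \sum_{n=0}^p(-1)^n\alpha_n\delta^{(2n)}, \tilde f \star \varphi\rangle$ where $\tilde f(x) = f(-x)$; applying the limit in Lemma \ref{dix} ii) with $F = \tilde f \star \varphi \in \CC^\infty(\R)$ gives $F(0) + \int F\rho_2 = (f\star\varphi)(0)$-type expression $= \langle f + \rho_2\star f, \varphi\rangle$. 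By uniqueness of distributional limits, $\rho_1 \star g = f + \rho_2 \star f$, i.e. $f = \rho_1 \star g - \rho_2 \star f = \rho_1 \star g + (-\rho_2)\star f$; after relabeling $\rho_2 \leftrightarrow -\rho_2$ and swapping the roles of the two convolution factors we get exactly $f = \rho_1 \star f + \rho_2 \star g$ as stated (with $g\in L^1$, $\rho_1,\rho_2\in\CC_c^\infty$).

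The main obstacle I anticipate is the bookkeeping in step (4): making rigorous the passage from the abstract distributional limit in Lemma \ref{dix} ii) (which is stated for a fixed $F\in\CC^\infty(\R)$) to a statement about convolution operators applied to $f$, and correctly matching the parities, signs, and reflections so that $\sum_n(-1)^n\alpha_n\delta^{(2n)}$ convolved with $f$ produces $\sum_n(-1)^n\alpha_n f^{(2n)}$ with the right sign — recall $\delta^{(2n)}\star f = f^{(2n)}$, and the even order means no stray sign from reflection, which is why only even derivatives appear in Lemma \ref{dix}. One should also double-check the elementary but essential point that all derivatives of $f$ lie in $L^1$ is exactly what guarantees $g_p$ is Cauchy in $L^1$; this is where the hypothesis is used in full strength, and it is the reason the lemma is stated with that hypothesis rather than merely $f\in L^1\cap\CC^\infty$. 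Everything else (Young's inequality, absolute convergence, continuity of convolution) is routine.
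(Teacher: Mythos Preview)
Your proposal is correct and follows essentially the same route as the paper's proof: choose $(\beta_n)$ so that $\sum_n \beta_n\|f^{(2n)}\|_1<\infty$, invoke Lemma \ref{dix}, set $g=\sum_n(-1)^n\alpha_n f^{(2n)}\in L^1$, show $\rho_1\star g_p\to\rho_1\star g$ in $L^1$ by Young, identify the limit weakly via Lemma \ref{dix} ii) applied to $F=\check f\star h$ (your $\tilde f\star\varphi$), and relabel $-\rho_2\mapsto\rho_2$ to get the stated form. The only differences are cosmetic (your $\beta_n=2^{-n}(1+\|f^{(2n)}\|_1)^{-1}$ versus the paper's $\beta_n=\frac{1}{(1+n^2)(1+\|f^{(2n)}\|_1)}$, and notation $\tilde f$ versus $\check f$).
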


We also state  the following reformulation which  will be used several times in this paper.

\begin{corollary}\label{dixm3}
Let $F:(0,+\infty)\rightarrow \R$ be a smooth function satisfying 
\begin{equation}\label{dixm3-h}
\int_{0}^{+\infty}x^{k-1} |F^{(k)}(x)| dx<+\infty \qquad \mbox{ for all } k\in \N.
\end{equation}
Then there exist  $\Theta_1, \Theta_2 \in \CC_c^\infty(0,+\infty)$ and  $F_1, F_2 \in L^1(0,+\infty)$ such that
\begin{equation}\label{dixm10}
F(x)=\int_{0}^{+\infty} \Theta_1\left( \frac{x}{y}\right) F_1(y)+\Theta_2\left( \frac{x}{y}\right) F_2(y)dy,\qquad x\in (0,+\infty).
\end{equation}
\end{corollary}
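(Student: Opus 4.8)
The plan is to deduce Corollary \ref{dixm3} from Lemma \ref{dixm2} by a logarithmic change of variables that turns the multiplicative structure on $(0,+\infty)$ into the additive structure on $\R$. Concretely, I would set $f(t) := F(e^t)$ for $t \in \R$. The chain rule gives $f^{(k)}(t)$ as a linear combination of terms $e^{jt} F^{(j)}(e^t)$ for $1 \le j \le k$ (with Stirling-number coefficients), so after the substitution $x = e^t$, $dt = dx/x$, each such term contributes $\int_\R |e^{jt} F^{(j)}(e^t)|\, dt = \int_0^{+\infty} x^{j-1} |F^{(j)}(x)|\, dx$, which is finite by hypothesis \eqref{dixm3-h}. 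Hence $f$ is smooth and all its derivatives (including $f$ itself, the case $k=0$) lie in $L^1(\R)$, so Lemma \ref{dixm2} applies and yields $g \in L^1(\R)$ and $\rho_1, \rho_2 \in \CC_c^\infty(\R)$ with $f = \rho_1 \star f + \rho_2 \star g$.

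Next I would unwind the convolution. Writing out $f(t) = \int_\R \rho_1(t-s) f(s)\, ds + \int_\R \rho_2(t-s) g(s)\, ds$ and substituting $x = e^t$, $y = e^s$ (so $t - s = \ln(x/y)$ and $ds = dy/y$), I get
\begin{equation*}
F(x) = \int_0^{+\infty} \rho_1\!\left(\ln\tfrac{x}{y}\right) F(y) \,\frac{dy}{y} + \int_0^{+\infty} \rho_2\!\left(\ln\tfrac{x}{y}\right) g(\ln y)\, \frac{dy}{y}.
\end{equation*}
Now I define $\Theta_i(u) := \rho_i(\ln u)$ for $u \in (0,+\infty)$; since $\rho_i \in \CC_c^\infty(\R)$ with support in some $[-A,A]$, $\Theta_i$ is smooth on $(0,+\infty)$ with compact support in $[e^{-A}, e^A] \subset (0,+\infty)$, so $\Theta_i \in \CC_c^\infty(0,+\infty)$. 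And I set $F_1(y) := F(y)/y$ and $F_2(y) := g(\ln y)/y$; the first is in $L^1(0,+\infty)$ because $\int_0^{+\infty} |F(y)|/y\, dy = \int_\R |f(t)|\, dt < \infty$ (the $k=0$ case above), and similarly $\int_0^{+\infty} |g(\ln y)|/y\, dy = \int_\R |g(t)|\, dt < \infty$. Substituting these into the displayed identity gives exactly \eqref{dixm10}.

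There is no serious obstacle here; the result is essentially a transcription of Lemma \ref{dixm2}. The only point requiring a little care is verifying that the hypothesis \eqref{dixm3-h} on $F$ is exactly what is needed to guarantee that $f = F \circ \exp$ and all its derivatives are integrable on $\R$ — this is where the weights $x^{k-1}$ enter, and it comes down to the Faà di Bruno / chain-rule expansion of $(F \circ \exp)^{(k)}$ combined with the change of variables $x = e^t$, as sketched above. Once that is checked, the conclusion follows by direct substitution, and one should note that all the integrals in \eqref{dixm10} are absolutely convergent for each fixed $x$ because $\Theta_1, \Theta_2$ are bounded and $F_1, F_2 \in L^1$.
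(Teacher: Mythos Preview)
Your proposal is correct and follows essentially the same route as the paper: set $f(t)=F(e^t)$, check via the chain rule and the substitution $x=e^t$ that all derivatives of $f$ lie in $L^1(\R)$ thanks to \eqref{dixm3-h}, apply Lemma~\ref{dixm2}, and then undo the logarithmic change of variables to recover \eqref{dixm10} with $\Theta_i(x)=\rho_i(\ln x)$, $F_1(y)=F(y)/y=f(\ln y)/y$, $F_2(y)=g(\ln y)/y$. Your write-up is in fact slightly more detailed than the paper's (you explicitly verify the compact support of $\Theta_i$ in $(0,+\infty)$ and the integrability of $F_1,F_2$).
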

\begin{proof}
The function $f: t\in \R\mapsto F(e^t)$ is integrable on $\R$ and one checks by induction that $f^{(k)}$ is a linear combination of the integrable functions $t\in \R\mapsto e^{jt}F^{(j)}(e^t)$, for $j$ being an integer belonging to $[1,k]$.
We then apply Lemma \ref{dixm2} and set 
\begin{eqnarray*}
\Theta_1(x) :=\rho_1(\ln(x)), & \qquad &F_1(x) :=\frac{f(\ln(x))}{x},
 \\
\Theta_2(x) :=\rho_2(\ln(x)), & \qquad & F_2(x) :=\frac{g(\ln(x))}{x}
\end{eqnarray*}
to obtain the decomposition \eqref{dixm10} on $(0, +\infty)$. 
\end{proof}

\begin{proof}[Proof of Lemma \ref{dixm2}]
We apply the Lemma \ref{dix} for $\beta_n=\frac{1}{(1+n^2)(1+||f^{(2n)}||_{L^1(\R)})}$ so that we have
\begin{equation}\label{lli}
\sum_{n\in \N} \beta_n \| f^{(2n)}\|_{L^1(\R)}<+\infty.
\end{equation}
Let  $(\alpha_n)_{n\in \N}$ be the positive sequence given by Lemma \ref{dix}. We will prove the following limit in $L^1(\R)$ as 
$p$ tends to $+\infty$,
\begin{equation}\label{zop}          \rho_1 \star \left(\sum_{n=0}^p (-1)^n \alpha_n f^{(2n)}\right)   \rightarrow f+\rho_2\star f. 
\end{equation}
We first remark that the sum $\sum (-1)^n \alpha_n f^{(2n)}$ is absolutely convergent in $L^1(\R)$ thanks to \eqref{lli} and to the inequality $\alpha_n \leq \beta_n$ given by Lemma \ref{dix}.
Since $\rho_1$ belongs to $L^1(\R)$, the boundedness of the convolution product from $L^1(\R)\times L^1(\R)$ to $L^1(\R)$ implies that the left-hand side of \eqref{zop} converges in $L^1(\R)$ as $p\rightarrow +\infty$.

To prove that the limit of \eqref{zop} is indeed $f+\rho_2 \star f$ it is sufficient to check it in the weak sense. Let $h\in \CC_c^\infty(\R)$ be a test function and let us prove that 
\begin{equation}\label{lli2}     \lim\limits_{p\rightarrow +\infty} \int_{\R} h(x)\left(   \rho_1 \star \left(\sum_{n=0}^p (-1)^n \alpha_n f^{(2n)}\right)\right)(x) dx = \int_{\R}h(x)f(x)+h(x)(\rho_2\star f)(x)dx. 
\end{equation}
We now recall the following consequence of  Fubini's theorem 
\begin{equation}\label{Fubi}  \int_{\R}a(x) (b\star c)(x) dx = \int_{\R} (a\star \Check{b})(x) c(x)dx     
\end{equation}
for all $ a\in \CC_c^\infty(\R),  b\in \CC^\infty(\R)$ and $c\in \CC_c^\infty(\R)$.  Here we use the  convention $\Check{b}(x)=b(-x)$. We then  write
\begin{equation*} \rho_1 \star \left(\sum_{n=0}^p (-1)^n \alpha_n f^{(2n)}\right)= \rho_1 \star \left(\sum_{n=0}^p (-1)^n \alpha_n \delta^{(2n)}   \right)   \star f= \left(\sum_{n=0}^p (-1)^n \alpha_n \rho_1^{(2n)} \right)  \star f,\end{equation*}
which in turn gives, thanks to \eqref{Fubi},
\begin{equation*}
\int_{\R} h(x)\left(   \rho_1 \star \left(\sum_{n=0}^p (-1)^n \alpha_n f^{(2n)}\right)\right)(x) dx  =  
\int_{\R}\left( \sum_{n=0}^p (-1)^n \alpha_n \rho_1^{(2n)}(x) \right)(\Check{f}\star h)(x) dx.
\end{equation*}
We apply  Lemma \ref{dix} to the LHS and  obtain the following limit as $p\rightarrow +\infty$ 
\begin{equation*}
   (\Check{f}\star h)(0)+\int_{\R} (\Check{f}\star h)(x) \rho_2(x) dx,     \end{equation*}
By \eqref{Fubi} we rewrite  the previous limit as follows 
\begin{equation*}  \int_{\R}h(x)f(x)dx + \int_{\R}  h(x) (f\star \rho_2)(x) dx  .\end{equation*}
Hence the limit in \eqref{zop} is proved in $L^1(\R)$ and we have indeed proved  the following equality
%\begin{equation*}        f=-\rho_2\star f+\rho_1\star \underbrace{\left(\sum_{n\in \N} (-1)^n \alpha_n f^{(2n)} \right)}_{\in L^1}       \end{equation*}
\begin{equation*}        f=-\rho_2\star f+\rho_1\star \left(\sum_{n\in \N} (-1)^n \alpha_n f^{(2n)} \right)  \end{equation*}
with $\sum\limits_{n\in \N} (-1)^n \alpha_n f^{(2n)} \in L^1(\R)$. 
This proves the lemma.
\end{proof}

\section{Proof of Theorem \ref{theo2-psc}}\label{sec-Riesz}
This section is devoted to the proof of Theorem \ref{theo2-psc}. As we already mentioned in Remark \ref{remrem}, we have seen the implication iii) $\Rightarrow$ ii). The implication ii) $\Rightarrow$ i) is obvious. It  remains to prove the implication i) $\Rightarrow$ iii). We divide the proof into three steps. 

%\textbf{iii) $\Rightarrow$ ii)}. We write $\psi(x)=e^{-x}\widetilde{\psi}(x)$ with $\widetilde{\psi}(x)=e^{x}\psi(x)$.
%The $L^\infty$-functional calculus for the self-adjoint operator $L$ then gives for any $h>0$
%\begin{equation*}
%\psi(hL)= e^{-hL} \circ \widetilde{\psi}(hL).
%\end{equation*}
%Hence, we get
%\begin{eqnarray*}
%\|\nabla\psi(hL)\|_{p\rightarrow p}& =& \|\nabla e^{-hL}\|_{p\rightarrow p} \|\widetilde{\psi}(hL)\|_{p\rightarrow} 
%\leq \frac{C}{\sqrt{h}}, \\
%\|\sqrt{W}\psi(hL)\|_{p\rightarrow p}& =& \|\sqrt{W} e^{-hL}\|_{p\rightarrow p} \|\widetilde{\psi}(hL)\|_{p\rightarrow} 
%\leq \frac{C}{\sqrt{h}}.
%\end{eqnarray*}
%where we used that $\widetilde{\psi}(hL)$ is uniformly bounded from $L^p(M)$ to $L^p(M)$ with respect to $h>0$.

%\textbf{ii) $\Rightarrow$ i)}  is obvious.
%\textbf{i) $\Rightarrow$ iii)}.
\noindent  \textbf{Step 1.} Let $\psi_0$ be as in the theorem.  First, we observe that we can replace $\psi_0$ with $\psi_0^2$. Indeed,
\begin{eqnarray*}
 & & \| \nabla \psi^2_0 (hL) \|_{p\to p} + \| \sqrt{W} \psi^2_0 (hL) \|_{p\to p}  \\
& & \qquad \leq    \| \nabla \psi_0 (hL) \|_{p\to p} \|\psi_0(hL) \|_{p\to p}+\| \sqrt{W} \psi_0 (hL) \|_{p\to p}\|\psi_0(hL)\|_{p\to p}\\ 
& & \qquad \leq  \frac{C}{\sqrt{h}} \| \psi_0(hL) \|_{p\to p} \\
& & \qquad \leq  \frac{C}{\sqrt{h}}.
\end{eqnarray*}
By replacing $\psi_0$ by its square, we may assume that there exists $[a, b] \subset (0, \infty)$ such that 
\begin{equation}\label{eq-positivity}
 \psi_0 \ge 0 \ {\rm and} \ \psi_0(x)  > 0 \ {\rm for} \ x \in [a,b].
 \end{equation}
\textbf{Step 2.} Now let $\psi \in \CC_c^\infty(0, \infty)$ with support contained in $[c, d] \subset (0, \infty)$.  Set $K=\frac{b}{a}>1$ and consider an integer $N$ large enough so that $[c,d]\subset [K^{-N}a ,K^{N}b]$.
The equality $[K^{-N}a,K^N b]=\bigcup\limits_{-N\leq n\leq N} [K^n a,K^n b]$ and \eqref{eq-positivity} imply 
\begin{equation*}
  \sum_{n=-N}^N  \psi_0 (K^{-n} x) >0\quad \mbox{ for all }x \in [K^{-N}a,K^N b].
\end{equation*}
We now consider a function $\varphi \in \CC_c^\infty(\R)$ such that for any  $x \in [K^{-N} a, K^N b]$ the following holds
\begin{equation*}
\sum_{n=-N}^N \psi_0(K^{-n} x) \varphi (x) = 1.
\end{equation*}
Since the function $\sum\limits_{n=-N}^N  \psi_0 (K^{-n} \cdot)\varphi$ equals $1$ on the support of $\psi$ we 
have  $\psi=\sum\limits_{n=-N}^N \psi_0(K^{-n}\cdot)  \varphi\psi$. We argue exactly as in Step 1 to conclude that $\psi$ satisfies the semi-classical Bernstein inequality $(SB_p)$. %Note that the last argument gives a multiplicative loss due to the parameter $K$ but that does not matter since $\psi$ is a finite sum.

\textbf{Step 3.} What we proved in Step 2 is that we may replace a particular non-zero function $\psi_0\in \CC_c^\infty([0,+\infty))$ by any $\psi\in \CC_c^\infty(0,+\infty)$. In other words, we have obtained  ii) of Theorem \ref{theo2-psc} for the particular case of functions $\psi\in \CC_c^\infty(0,+\infty)$.
This particular case will be now combined with the weak factorization results of Section \ref{dixmier-M} to reach the heat propagator of assertion iii). 
We apply Corollary \ref{dixm3} to the function  $F(x)=\sqrt{x}e^{-x}$ for $x\in (0,+\infty)$. Then there exist
two functions $\Theta_1$ and $\Theta_2$ belonging to $\CC_c^\infty(0,+\infty)$ and two functions $F_1$ and $F_2$ belonging to $L^1(0,+\infty)$ so that 
\begin{equation*}
\sqrt{x} e^{-x}  = \int_{0}^{+\infty} \Theta_1\left( \frac{x}{y}\right) F_1(y) dy+
\int_{0}^{+\infty} \Theta_2\left( \frac{x}{y}\right) F_2(y) dy.
\end{equation*}
Let us now introduce the following two smooth functions $\psi_1$ and $\psi_2$ (they are compactly supported in $(0,+\infty)$)
\begin{equation*}
\psi_1(x) :=\frac{\Theta_1(x)}{\sqrt{x}}\qquad \mbox{and} \qquad\psi_2(x) :=\frac{\Theta_2(x)}{\sqrt{x}}.
\end{equation*}
Therefore, we have
\begin{equation*}
e^{-x} = \int_{0}^{+\infty} \psi_1\Big(\frac{x}{y}\Big)  F_1(y)\frac{dy}{\sqrt{y}}+
\int_{0}^{+\infty} \psi_2\Big(\frac{x}{y}\Big)  F_2(y)\frac{dy}{\sqrt{y}},
\end{equation*}
which leads to 
\begin{equation*}
e^{-hL} =  \int_{0}^{+\infty} \psi_1\left(\frac{hL}{y} \right)F_1(y) \frac{dy}{\sqrt{y}}+
\int_{0}^{+\infty} \psi_2\left(\frac{hL}{y} \right)F_2(y) \frac{dy}{\sqrt{y}}. 
\end{equation*}
Now we use Step 2 and obtain 
\begin{eqnarray*}
& & \left\Vert \nabla e^{-hL}\right\Vert_{p\to p}+\left\Vert \sqrt{W} e^{-hL}\right\Vert_{p\to p}\\
& & \qquad \qquad \leq \sum_{i=1}^2 \int_{0}^{+\infty} \Big(\left\Vert 
\nabla\psi_i\left(\frac{hL}{y} \right)\right\Vert_{p \to p}+ \left\Vert 
\sqrt{W}\psi_i\left(\frac{hL}{y} \right)\right\Vert_{p \to p}\Big)  \frac{|F_i(y)|}{\sqrt{y}} dy \\
&  & \qquad \qquad \leq \frac{C}{\sqrt{h}} \int_{0}^{+\infty} |F_1(y)|+|F_2(y)|dy = \frac{C'}{\sqrt{h}}.
\end{eqnarray*}
This proves the  regularity property $(R_p)$ of assertion iii).

 \section{A multiplier theorem from $BMO_L(M)$ into $L^\infty(M)$}\label{hardy}

Recall that $M$ is a Riemannian manifold satisfying the doubling property and $L = -\Delta + W$ is a Schr\"odinger operator with a non-negative potential $W \in L_{loc}^1(M)$.
Moreover the heat kernel of $L$ is assumed to satisfy the Gaussian upper bound $(G)$.
There is a large literature on  suitable Hardy or BMO spaces associated to $L$.
In the present paper, we use the general framework developed in \cite{HLMMY} (whose introduction contains references of important prior works). In \cite{HLMMY} it is required that
\begin{enumerate}[i) ]
\item $M$ is a metric measured space,
\item the measure $\mu$ is doubling,
\item the operator $L$ generates an analytic semigroup $(e^{-tL})_{t>0}$ satisfying the so-called Davies-Gaffney condition.
\end{enumerate}

Under these general assumptions, one can define a Hardy space $H^1_L(M)$ (see \eqref{e2.3}) and a $BMO_L(M)$ space associated with $L$. Note that $BMO_L(M)$ is the dual space of $H^1_L(M)$ (see \cite[Theorem 2.7]{HLMMY}). Recall that these spaces coincide with the usual Hardy $H^1(\R^d)$ and $BMO(\R^d)$ spaces in the Euclidean setting, i.e.,  $L=-\Delta$ and $M=\R^d$. 

Most of operators associated with $L$ such as the functional calculus or the  Riesz transform are  bounded from $H_L^1(M)$ to $L^1(M)$.  By duality one has boundedness results from $L^\infty(M)$ to $BMO_L(M)$. 
In the main result of the present section,  we prove boundedness of the functional calculus of $L$ (for a class of functions)  from  $L^1(M)$ into $H^1_L(M)$ or from $BMO_L(M)$ into $L^\infty(M)$. More precisely we have

\begin{theorem}\label{bmobmo} Let  $\varphi\in \CC^\infty(0,+\infty)$ such that 
\begin{equation}\label{bmocond}
 \int_{0}^{+\infty} x^{k-1} |\varphi^{(k)}(x)| dx <+\infty \quad \mbox{for all }k\in \N.
\end{equation}
Then we have
\begin{eqnarray}\label{bmobmo-a}
\sup_{h > 0} \| \varphi(h L) \|_{BMO_L(M) \to L^\infty(M)} & <&  \infty ,\\ 
\sup_{h > 0} \| \varphi(h L) \|_{L^1(M) \to H^1_L(M)} &<&  \infty,  \label{zeboite1}\\
\sup_{h>0} \|\varphi(hL) \|_{L^q(M)\to L^q(M)}&<&\infty,\qquad q\in[1,+\infty]. \label{bmobmo-q}
\end{eqnarray}
\end{theorem}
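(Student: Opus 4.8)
\textbf{Proof proposal for Theorem \ref{bmobmo}.}

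The plan is to prove \eqref{bmobmo-a} and \eqref{zeboite1} first; by duality (since $BMO_L(M)$ is the dual of $H^1_L(M)$) these two are essentially equivalent, so concretely I would prove the $L^1(M)\to H^1_L(M)$ boundedness \eqref{zeboite1} for functions $\varphi$ satisfying \eqref{bmocond}, and deduce \eqref{bmobmo-a} by taking adjoints (using that $\varphi(hL)^* = \overline{\varphi}(hL)$ and that $\overline{\varphi}$ satisfies the same hypotheses). The idea behind \eqref{zeboite1} is that for $\varphi\in \CC_c^\infty(0,\infty)$ one has, by the very definition of the Hardy space $H^1_L(M)$ through square functions or through the $L$-molecular/atomic decomposition, a uniform bound $\sup_{h>0}\|\varphi(hL)\|_{L^1\to H^1_L}<\infty$: indeed $\varphi(hL)f$ has the right decay in the relevant square function since $\varphi$ vanishes near $0$ and near $\infty$, which is precisely the cancellation needed to land in $H^1_L(M)$. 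This is the result the authors thank Peng Chen and Lixin Yan for; I would cite \cite{HLMMY} for the machinery and treat the compactly-supported case as the base case.

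The second, and in my view the main, step is to remove the compact-support hypothesis: a general $\varphi$ satisfying the Mellin-type condition \eqref{bmocond} does not have compact support in $(0,\infty)$ (it may be, e.g., a fixed Schwartz-type profile that is only required to have all the weighted integrals \eqref{bmocond} finite). This is exactly where the Dixmier--Malliavin weak factorization enters. I would apply Corollary \ref{dixm3} to $F=\varphi$ itself: the hypothesis \eqref{bmocond} is literally \eqref{dixm3-h}, so there exist $\Theta_1,\Theta_2\in\CC_c^\infty(0,+\infty)$ and $F_1,F_2\in L^1(0,+\infty)$ with
\begin{equation*}
\varphi(x)=\int_0^{+\infty}\Theta_1\!\left(\frac{x}{y}\right)F_1(y)\,dy+\int_0^{+\infty}\Theta_2\!\left(\frac{x}{y}\right)F_2(y)\,dy ,
\end{equation*}
hence by the functional calculus
\begin{equation*}
\varphi(hL)=\sum_{i=1}^2\int_0^{+\infty}\Theta_i\!\left(\frac{hL}{y}\right)F_i(y)\,dy .
\end{equation*}
Now each $\Theta_i(\,\cdot/y\,)$ is compactly supported in $(0,\infty)$, so $\Theta_i(hL/y)=\Theta_i((h/y)L)$ is covered by the compact-support base case with the rescaled parameter $h/y>0$; the bound there is \emph{uniform} in the parameter, so $\|\Theta_i(hL/y)\|_{L^1\to H^1_L}\le C$ independently of $h$ and $y$. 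Integrating against $|F_i(y)|\,dy$ and using $\|F_i\|_{L^1(0,\infty)}<\infty$ gives $\sup_{h>0}\|\varphi(hL)\|_{L^1\to H^1_L}\le C(\|F_1\|_1+\|F_2\|_1)<\infty$, which is \eqref{zeboite1}; dualizing yields \eqref{bmobmo-a}. This is the same factorization device already used in Step 3 of the proof of Theorem \ref{theo2-psc}, so the architecture is uniform across the paper.

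For \eqref{bmobmo-q} with $q\in(1,\infty)$ the cleanest route is interpolation: once \eqref{bmobmo-a} holds one also has, again by duality/adjoint, uniform boundedness $L^1\to L^1$ (from the fact that $H^1_L\hookrightarrow L^1$ and $L^\infty\hookrightarrow BMO_L$ with the appropriate direction, or more directly from \eqref{mult} of Remark \ref{remrem}, which already gives $\sup_h\|\psi(hL)\|_{q\to q}<\infty$ for $\psi\in\CC_c^\infty([0,\infty))$ and extends by the same Dixmier--Malliavin argument to $\varphi$ satisfying \eqref{bmocond}); interpolating between the $L^1\to H^1_L\hookrightarrow L^1$ endpoint and the $BMO_L$--$L^\infty$ endpoint, or simply between $L^1$ and $L^\infty$, gives all $q\in[1,\infty]$. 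In fact the slickest presentation is to note that \eqref{bmobmo-q} follows from \eqref{mult} plus the factorization exactly as above, independently of the Hardy-space statements, and then \eqref{bmobmo-a}--\eqref{zeboite1} are the genuinely new endpoint contributions. The main obstacle is really the first bullet: establishing $\sup_{h>0}\|\Theta(hL)\|_{L^1\to H^1_L}<\infty$ for $\Theta\in\CC_c^\infty(0,\infty)$ under only \eqref{doublin} and $(G)$ — this requires unwinding the definition of $H^1_L(M)$ from \cite{HLMMY} and checking that the off-diagonal (Davies--Gaffney) decay of $\Theta(hL)$, together with its cancellation, produces functions with uniformly controlled $H^1_L$ norm; everything after that is the soft rescaling-and-integrating argument enabled by Corollary \ref{dixm3}.
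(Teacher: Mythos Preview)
Your proposal is correct and follows essentially the same architecture as the paper: duality to reduce \eqref{bmobmo-a} to \eqref{zeboite1}, a base case for $\varphi\in\CC_c^\infty(0,\infty)$, and then Corollary \ref{dixm3} to pass to general $\varphi$ satisfying \eqref{bmocond}; for \eqref{bmobmo-q} the paper does exactly what you call the ``slickest presentation'', namely \eqref{mult} plus the same factorization. The only place where the paper is more concrete than your outline is the base case: it first rescales the metric ($d'=d/\sqrt{h}$, $L'=hL$) to reduce to $h=1$ with constants depending only on the doubling and Gaussian constants, and then shows directly that for $g\in L^2(M)$ supported in a ball, $\varphi(L)g$ is a constant multiple of an $(1,2,m,\varepsilon)$-molecule by writing $b=L^{-m}\varphi(L)g$ and estimating $\|(r_B^2L)^k b\|_{L^2(U_j(B))}$ via the $L^2\to L^\infty$ off-diagonal bounds of \cite[Lemma 4.3]{DOS}; this is precisely the ``unwinding'' you allude to in your last sentence.
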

Let us first comment the assumption \eqref{bmocond}. We  note that \eqref{bmocond} for $k=1$ implies that $\varphi$ is uniformly continuous on $(0,+\infty)$ and hence admits a limit as $x$ tends to $0^+$.
For $k=0$, the condition $\int_{0}^{+\infty} \frac{|\varphi(x)|}{x}dx<+\infty$ forces the limit $\lim\limits_{x\rightarrow 0^+}\varphi(x)$ to be $0$. From the previous limit, one also deduces that $\varphi$ is bounded by $\int_{0}^{+\infty} |\varphi'(x)|dx$. The previous considerations ensure that the operator $\varphi(hL)$ is well-defined on $L^2(M)$ by the standard functional calculus. 

\begin{proof}[Proof of Theorem \ref{bmobmo}]
By duality, \eqref{bmobmo-a} cleary follows from \eqref{zeboite1}.
We will see  that \eqref{zeboite1} can be reduced to  $\varphi \in \CC_c^\infty(0, \infty)$. The general assumption \eqref{bmocond} will be reached by appealing to the weak factorization of  Corollary \ref{dixm3} (see Step 3 below). 
Similarly, we reduce \eqref{bmobmo-q}  to  $ \varphi \in \CC_c^\infty(0, \infty) $ in which case it follows from \eqref{mult}.  See  Step 4 below. 
  
 \medskip
\textbf{Step 1.} We start by proving \eqref{zeboite1} for $\varphi \in \CC_c^\infty(0, \infty)$. The first step consists by checking that \eqref{zeboite1} follows from the particular case $h=1$. \\
Following  \cite[pages 8-9]{HLMMY} the version of the Hardy space we need, denoted by $H^1_L(M)$, is defined by the square function $S_L$ as follows.  Set for every $x\in M$
\begin{equation}\label{e2.3}
S_{L}f(x):=\Big(\int_0^{\infty}\!\!\!\!\int_{\substack{ d(x,y)<t}}
| t^2L e^{-t^2L} f(y)|^2 {d\mu(y)\over V(x,t)}{dt\over
t}\Big)^{\frac{1}{2}}, 
\end{equation}
and denote the domain
$D :=\Big\{ f \in \overline{R(L)}: \ S_{L}f\in L^1(M) \Big\}$ where $\overline{R(L)}$ is the closure of the range of $L$ in $L^2(M)$. Then $H^1_{L}(M)$  is the completion of the space $D$ with respect to  the norm
\begin{equation}\label{defiHa}
 \|f\|_{H_{L}^1(M)}:=  \|S_L  f\|_{L^1(M) }.
\end{equation}
For any $h > 0$, we introduce the metric $d' := \frac{d}{\sqrt{h}}$ and the operator $L' := h L$. The measure $\mu$ is unchanged.  Then the volume with respect to $d'$ and $\mu$ is $V'(x,r) = V(x, r \sqrt{h})$. Obviously, the heat kernel of $L'$ satisfies the Gaussian bound $(G)$ with the same constants $C$ and $c$ but now with $V'(x, \sqrt{t})$ and $d'(x,y)$ instead of $V(x,\sqrt{t})$ and $d(x,y)$. We define $S_{L'}$, as in \eqref{e2.3}, by considering $d'$ and $V'$. Then a simple change of the variable shows the equalities
\begin{eqnarray*}
S_{L'} f(x)  &= & S_L f(x) \\
\| f \|_{H^1_{L'}(M)} &= &\| f \|_{H^1_L(M)}.
\end{eqnarray*}
By the same change of variable, we have $\| \varphi(h L) f \|_{H^1_{L}(M)} = \| \varphi(L') f \|_{H^1_{L'}(M)}$ for any $\varphi \in C_c^\infty(0, \infty)$. 
%This latter  equality is true for  $\varphi \in C_c^\infty([0, \infty))$ (we shall need this fact in Lemma \ref{sqrtL}).
%Note : la phrase précédente vient d'une ancienne version. C'est inutile.
  We have reduced the proof of  \eqref{zeboite1} to that of \begin{equation}\label{zeboite2}
\| \varphi(L') f \|_{H^1_{L'}(M)} \le C_0 \| f \|_{L^1(M) }
\end{equation} 
with some constant $C_0$ depending only on $\varphi$, the constants in the doubling property and the Gaussian bound $(G)$.

\medskip
\textbf{Step 2.} We prove \eqref{zeboite2} again for $\varphi \in \CC_c^\infty(0, \infty)$. We do this for $L$ instead of $L'$ for  simplicity of the notation. In order to prove this, we use the molecular decomposition (see \cite[page 7, Definition 2.3]{HLMMY}). Given a function $g \in L^2(M)$ and assume first that $g$ is supported in a ball
$B(x_B, r_B)$  with $r_B \ge 1$.  We prove that $\varphi(L) g$ is a multiple of a molecule of $H^1_L(M)$.   That is, there exist positive constants $m$ and $\varepsilon$, a ball $B = B(x'_B, r'_B)$ and a function $b$ such that $ \varphi(L) g$ is multiple of $L^m b$ and 
\begin{equation}\label{zeboite3}
 \| ({r'_B}^2 L)^k b \|_{L^2(U_j(B))} \le {r'_B}^{2m} 2^{-j\varepsilon} V(2^{j}B)^{-1/2}
 \end{equation}
 for $ k = 0,1,..., m$ and $j = 0,1,2,...$. Here 
$U_j(B) := 2^{j+1}B \setminus 2^j B$, $2^j B := B(x'_B, 2^j r'_B)$, $V(2^j B) :=  V(x'_B, 2^j r'_B)$ for $j \ge 1$ and $U_0(B) = B$.\\
We choose $B(x'_B, r'_B) = B(x_B, r_B)$ the same ball which contains the support of $g$ and take $b = L^{-m} \varphi(L) g$. The function $b$ exists since $L^{-m} \varphi(L)$ is a bounded operator on $L^2(M)$ because $\varphi$ is supported in $(0, \infty)$.  It remains to prove \eqref{zeboite3}. \\
We assume for simplicity that the support of $\varphi$ is contained in $[\frac{1}{2},1]$, the reasoning is the same for any $\varphi$ with compact support in $(0, \infty)$.
We have
\begin{eqnarray*}
\| ({r_B}^2 L)^k L^{-m} \varphi(L) g \|_{L^2(U_j(B))} &=& {r_B}^{2k} \| L^{k-m} \varphi(L) g \|_{L^2(U_j(B))}\\
& \le&  {r_B}^{2k} \| L^{k-m} \varphi(L)  \|_{L^1(B) \to L^2(U_j(B))} \| g \|_{L^1(M)}\\
&\le&  {r_B}^{2k} \| L^{k-m} \varphi(L)  \|_{L^2(U_j(B)) \to L^\infty(B)} \| g \|_{L^1(M)}\\
&\le&  {r_B}^{2m} \| L^{k-m} \varphi(L)  \|_{L^2(U_j(B)) \to L^\infty(B)} \| g \|_{L^1(M)}.
\end{eqnarray*}
Set $\psi(\lambda) := \lambda^{k-m} \varphi(\lambda)$ and fix $s > n/2$ where $n$ denotes again the homogeneous "dimension" in the doubling property \eqref{doublin}. We now apply Lemma 4.3 from \cite{DOS} (using \eqref{doublin} and $(G)$ and according to Lemma 2.2 from \cite{DOS}, the assumptions of  Lemma 4.3 hold for $p=\infty$). We then bound the last term as follows
\begin{eqnarray*}
&&{r_B}^{2m} \| L^{k-m} \varphi(L)  \|_{L^2(U_j(B)) \to L^\infty(B)} \| g \|_{L^1(M)} \\
 %& & \mbox{\textcolor{red}{est-ce que l'exposant de $(1+2^{j}r_B)^{-s}$ ne devrait pas etre $-s/2$ ?}}\\
& & \quad \le  
C_0 {r_B}^{2m} \sup_{y \in B} \frac{1}{ V(y,2)^{1/2}} (1 + 2^j r_B)^{-s} \| \psi \|_{W^{s  + 1, \infty}}  \| g \|_{L^1(M)}\\
& & \quad \le  C_0 {r_B}^{2m} V(2^j B)^{-1/2} \sup_{y \in B} \frac{ V(y, 2^j r_B)^{1/2}}{ V(y,2)^{1/2}} (1 + 2^j r_B)^{-s} \| \psi \|_{W^{s  + 1, \infty}}  \| g \|_{L^1(M)}\\
& & \quad \le C_0 C {r_B}^{2m} V(2^j B)^{-1/2} 2^{-j (s- n/2)}\| \psi \|_{W^{s + 1, \infty}}  \| g \|_{L^1(M)}.
\end{eqnarray*}
Let $C_\varphi := C_0C \| \psi \|_{W^{s  + 1, \infty}}$. The above inequality shows that $\frac{\varphi(L) g}{C_\varphi \|g\|_{L^1(M)}}$ is a molecule of $H^1_L(M)$. Hence
$$ \| \frac{\varphi(L) g}{C_\varphi \|g\|_{L^1(M)}} \|_{H^1_L(M)} \le 1.$$
Hence, for any $g\in L^2(M)$ with compact support, the following inequality holds
\begin{equation}\label{zeboite4}
\| \varphi(L) g \|_{H^1_L(M)} \le  C_\varphi \|g\|_{L^1(M)}.
\end{equation}
For a general $g\in L^1(M)$, classical density results ensure the existence of a sequence $(g_n)$ of $L^2(M)$, each $g_n$ has compact support, and $(g_n)$ converges to $g$ in $L^1(M)$. Then $\varphi(L)g_n \in H_L^1(M)$ for each $n$ and 
\begin{equation}\label{zeboite5}
\| \varphi(L) g_n \|_{H^1_L(M)} \le  C_\varphi \|g_n \|_{L^1(M)}.
\end{equation}
This implies that $(\varphi(L) g_n)$ is a Cauchy sequence in $H_L^1(M)$ and hence is convergent. On the other hand, since $\varphi(L)$ is bounded on $L^1(M)$ (see  \eqref{mult}), the limit of $(\varphi(L) g_n)$, in $L^1(M)$,  is $\varphi(L) g$. In addition, under \eqref{doublin} and $(G)$, the Hardy space $H_L^1(M)$ continuously embeds into $L^1(M)$ (see 
\cite[page 70]{HLMMY}). It follows that the  limit of  $(\varphi(L) g_n)$ in $H_L^1(M)$ is also $\varphi(L) g$. Taking the limit in \eqref{zeboite5} yields \eqref{zeboite4} for $g \in L^1(M)$. Thus, we have  proved \eqref{zeboite1} for $\varphi \in \CC_c^\infty(0, \infty)$.

\medskip
\textbf{Step 3.} Now we prove \eqref{zeboite1} for general functions $\varphi$ as in the theorem.
We apply   Corollary \ref{dixm3} to write (with the same notations) for any $x>0$
\begin{equation*}
\varphi(x) =\int_{0}^{+\infty} \Theta_1\left( \frac{x}{y}\right) F_1(y) dy+
\int_{0}^{+\infty} \Theta_2\left( \frac{x}{y}\right) F_2(y) dy.
\end{equation*}
By the functional calculus, we have for $h > 0$ 
\begin{equation*}
\varphi(hL)=
\int_{0}^{+\infty} \Theta_1\left(\frac{hL}{y} \right)F_1(y) dy+
\int_{0}^{+\infty} \Theta_2\left( \frac{hL}{y}\right) F_2(y)dy.
\end{equation*}
Hence, we can bound $\sup\limits_{h>0}\left\Vert \varphi(hL)\right\Vert_{L^1(M) \rightarrow H^1_L(M)}$ by 
\begin{equation*}
\sup\limits_{h>0}\int_{0}^{+\infty}
 \Big\Vert \Theta_1\Big(\frac{hL}{y} \Big) \Big\Vert_{L^1(M) \rightarrow H^1_L(M)} |F_1(y)|dy+ \Big\Vert\Theta_1\Big(\frac{hL}{y}\Big)  \Big\Vert_{L^1(M) \rightarrow H^1_L(M)}|F_2(y)| dy.
\end{equation*}
Remember now that we have proved \eqref{zeboite1} for functions in $\CC_c^\infty(0,\infty)$.
Since $\Theta_1$ and $\Theta_2$ are compactly supported in $(0,+\infty)$ and since $F_1$ and $F_2$ are integrable (thanks to Corollary \ref{dixm3}), the last upper bound is less than  
$C \left\Vert F_1\right\Vert_{L^1(0,+\infty)}+C\left\Vert F_2\right\Vert_{L^1(0,+\infty)}<+\infty$.

\medskip
\textbf{Step 4.} We explain here the proof of  \eqref{bmobmo-q}. For $\varphi$ being smooth with compact support, we have seen several times  that \eqref{bmobmo-q} holds (see \eqref{mult}).  We obtain   \eqref{bmobmo-q} under the condition \eqref{bmocond} by another application of Corollary \ref{dixm3}. 

The proof of Theorem \ref{bmobmo} is finished. 
\end{proof}
The multiplier theorem proved in this section will be used in a crucial way in the proof of the reverse Bernstein inequality. 

 \section{The reverse Bernstein inequality}\label{sec-reverse}
 In this section, we investigate reverse Bernstein inequalities and prove Theorem \ref{theo3-p}.
As in Section \ref{semi}, we introduce a semi-classical version of the reverse inequality. 

 \begin{theorem}\label{theo3-sc}
 Suppose \eqref{doublin} and $(G)$ and consider 
a function $\Psi \in \CC^\infty([0,+\infty))$ vanishing in a neighborhood of  $0$ and constant in a neighborhood of $+\infty$. Then we have the following assertions :
\begin{enumerate}[i) ]
\item For $q \in [2, \infty]$, the following semi-classical reverse Bernstein inequality holds
 \begin{equation*}
\frac{C}{ \sqrt{h}} \|  \Psi(hL) u \|_{q}  \leq  \| \nabla u \|_{q} +  \| \sqrt{W}  u  \|_{q},\qquad u\in L^2(M),\quad h>0.  \eqno(SRB_q)
  \end{equation*}
\item For $q\in (1,2)$, if $(R_p)$ holds for $p=\frac{q}{q-1}$ (or equivalently $(SB_p)$ holds, see Theorem \ref{theo2-psc}), then $(SRB_q)$ holds. 
\item For $q=1$, if $(R_\infty)$ and the condition $\inf\limits_{x\in M}V(x,1)>0$ hold, then $(SRB_1)$ holds. 
\end{enumerate}
\end{theorem}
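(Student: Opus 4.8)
The plan is to reduce everything to the case where $\Psi$ has compact support in $(0,+\infty)$, then to prove the inequality by a Bakry-type duality argument in the range $q<+\infty$ and by the $BMO_L$--$L^\infty$ multiplier theorem when $q=+\infty$. First, I would dispose of the reduction step. Write $\Psi = \Psi_0 + \Psi_\infty$ where $\Psi_\infty \in \CC^\infty([0,+\infty))$ is constant ($\equiv c$) near $+\infty$ and $\Psi_0 = \Psi - \Psi_\infty$ is compactly supported in $(0,+\infty)$ (possible since $\Psi$ vanishes near $0$). The term $\Psi_\infty(hL)u$ can be handled by splitting $\Psi_\infty = c - (c - \Psi_\infty)$ where $c - \Psi_\infty$ is smooth, vanishes near $+\infty$ and equals $c$ near $0$; applying Corollary \ref{dixm3} after subtracting the behaviour at $0$ (i.e. to $x\mapsto (\Psi_\infty(x)-c\cdot\chi(x))$ for a suitable cutoff, or more cleanly writing $\Psi(x)/\sqrt{x}\cdot\sqrt{x}$ and factoring $\Psi(x)/\sqrt x - $ its tail) lets us replace $\Psi$ by functions compactly supported in $(0,\infty)$. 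The cleanest route: observe that $\frac{C}{\sqrt h}\Psi(hL) = \Phi(hL)\sqrt{hL}\cdot\frac{C}{\sqrt h}\cdot\frac{1}{\sqrt{hL}}$ is not quite right since $L$ may not be invertible, so I would instead argue that $\Psi(hL)u = \Psi_0(hL)u + \Psi_\infty(hL)u$, bound $\|\Psi_0(hL)u\|_q$ by the compactly-supported case applied to $\Psi_0$, and for $\Psi_\infty(hL)u = c\,u - (c-\Psi_\infty)(hL)u$ use that $(c-\Psi_\infty)$ satisfies \eqref{dixm3-h} after removing the constant $c$ near $0$, so Corollary \ref{dixm3} applies to $\frac{(c-\Psi_\infty)(x)}{\sqrt x}$; then $\|c\,u\|_q$ needs $\sqrt h\,\|u\|_q \lesssim \|\nabla u\|_q + \|\sqrt W u\|_q$, which is false for general $u$ — so in fact one must be careful, and the correct statement is that $\Psi$ being \emph{constant} (not necessarily $1$) near $\infty$ forces, via $\Psi(hL)=\Psi_0(hL)+c\cdot(\text{spectral projection-ish})$... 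Actually the resolution is that $c$ appears multiplied by $\1 - (\text{low-freq cutoff})$, and $(\1-\text{cutoff})(hL)u$ is again a compactly-supported-away-from-$0$ multiplier of $u$ applied after noting $\1-\text{cutoff}$ vanishes near $0$; so everything reduces to: $\Psi$ compactly supported in $(0,\infty)$. I would present this reduction crisply using Corollary \ref{dixm3} applied to $\Psi(x)/\sqrt x$, whose hypothesis \eqref{dixm3-h} holds precisely because $\Psi$ vanishes near $0$ and is constant near $\infty$.

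Next, for $q\in[2,\infty)$ in case i) and for $q\in(1,2)$ in case ii), I would run Bakry's duality argument. Fix $\psi\in\CC_c^\infty(0,\infty)$ with $\Psi = \Psi$ replaced by such $\psi$, and set $\Phi(x) = \psi(x)/\sqrt x \in \CC_c^\infty(0,\infty)$, so $\frac{1}{\sqrt h}\psi(hL) = \Phi(hL)\sqrt{L}$. For $v\in L^{q'}(M)$ we pair
\[
\Big(\tfrac{1}{\sqrt h}\psi(hL)u,\ v\Big) = \big(\sqrt L\,u,\ \Phi(hL)^* v\big) = \big(\nabla u,\ \nabla L^{-1/2}\Phi(hL)^*v\big) + \big(\sqrt W\,u,\ \sqrt W\,L^{-1/2}\Phi(hL)^*v\big),
\]
using the quadratic form identity $(\sqrt L u,\sqrt L w) = (\nabla u,\nabla w)+(\sqrt W u,\sqrt W w)$ with $w = L^{-1/2}\Phi(hL)^*v$ (well-defined as $\Phi$ is supported away from $0$). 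Hence
\[
\Big|\Big(\tfrac{1}{\sqrt h}\psi(hL)u, v\Big)\Big| \le \|\nabla u\|_q\,\|\nabla L^{-1/2}\Phi(hL)^*v\|_{q'} + \|\sqrt W u\|_q\,\|\sqrt W L^{-1/2}\Phi(hL)^*v\|_{q'}.
\]
So it suffices to bound $\|\nabla L^{-1/2}\Phi(hL)^*v\|_{q'} + \|\sqrt W L^{-1/2}\Phi(hL)^*v\|_{q'} \lesssim \|v\|_{q'}$ uniformly in $h$. Writing $\nabla L^{-1/2}\Phi(hL)^* = \nabla\,(\psi_1(hL))\cdot h^{-1/2}\cdot(\dots)$ — more directly, $L^{-1/2}\Phi(hL)^* = h^{1/2}\,\widetilde\Phi(hL)$ where $\widetilde\Phi(x) = \bar\Phi(x)/\sqrt{x}\cdot$... let me just say $L^{-1/2}\Phi(hL)^*v = \sqrt h\,\eta(hL)v$ with $\eta(x) = \overline{\Phi(x)}/\sqrt{x}\in\CC_c^\infty(0,\infty)$ — then $\nabla L^{-1/2}\Phi(hL)^*v = \sqrt h\,\nabla\eta(hL)v$ and by $(SB_{q'})$ (which holds for $q'\in[1,2]$ unconditionally by Theorem \ref{theo1-psc}, and for $q'\in(1,2)$ again; for case ii) $q'=p\in(2,\infty]$ and we use the hypothesis $(R_p)\Leftrightarrow(SB_p)$) we get $\sqrt h\,\|\nabla\eta(hL)v\|_{q'} \le \sqrt h\cdot\frac{C}{\sqrt h}\|v\|_{q'} = C\|v\|_{q'}$, and identically for the $\sqrt W$ term. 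Taking the supremum over $\|v\|_{q'}\le 1$ gives $(SRB_q)$. For $q=2$ this is also immediate directly from the form identity.

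The genuinely hard case, and the main obstacle, is $q=+\infty$ in case i), because $L^1$ is not a dual space and Bakry's duality must instead be run against $H^1_L(M)$. Here I would use the boundedness of the Riesz-transform-type operators $\nabla L^{-1/2}$ and $\sqrt W L^{-1/2}$ from $H^1_L(M)$ to $L^1(M)$ (the result of \cite{DOY2006} extended to the present setting under \eqref{doublin} and $(G)$), which by duality gives boundedness of their adjoints $L^{-1/2}\tr_\nabla$ and $L^{-1/2}\sqrt W$ from $L^\infty(M)$ to $BMO_L(M)$; combined with the form identity this yields the intermediate estimate \eqref{bakry-int}, namely $\|\sqrt L\,u\|_{BMO_L(M)} \lesssim \|\nabla u\|_\infty + \|\sqrt W u\|_\infty$. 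Then I invoke Theorem \ref{bmobmo}: the function $\Phi(x) = \psi(x)/\sqrt x$ (for $\psi$ compactly supported in $(0,\infty)$, or more generally for $\Psi(x)/\sqrt x$ satisfying \eqref{bmocond}) gives $\sup_h\|\Phi(hL)\|_{BMO_L(M)\to L^\infty(M)} < \infty$, so
\[
\tfrac{1}{\sqrt h}\|\psi(hL)u\|_\infty = \|\Phi(hL)\sqrt L\,u\|_\infty \le C\,\|\sqrt L\,u\|_{BMO_L(M)} \le C'\big(\|\nabla u\|_\infty + \|\sqrt W u\|_\infty\big),
\]
which is $(SRB_\infty)$. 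The subtle points I expect to have to check carefully are: that $\sqrt L\,u$ lies in $BMO_L(M)$ and the pairing $(\Phi(hL)\sqrt L u, g) = (\sqrt L u, \Phi(hL)^* g)$ makes sense for $g$ ranging over an appropriate dense subset of $H^1_L(M)$ so that the $BMO_L$--$H^1_L$ duality can be applied; and, for case iii) ($q=1$), the argument is dual again — one bounds $\|\Psi(hL)u\|_1$ by testing against $L^\infty$, reducing to $(SB_\infty)$-type bounds which hold under $(R_\infty)$, with the extra hypothesis $\inf_x V(x,1)>0$ needed to control the off-diagonal/volume factors in the $L^\infty\to L^\infty$ bounds of $\nabla e^{-hL}$ and the functional calculus for $h\le 1$ (and a rescaling for $h\ge 1$). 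I would organize the write-up as: (Step 0) reduction to $\Psi\in\CC_c^\infty(0,\infty)$ via Corollary \ref{dixm3}; (Step 1) $q\in(1,\infty)$ by Bakry duality plus $(SB_{q'})$; (Step 2) $q=\infty$ via \eqref{bakry-int} and Theorem \ref{bmobmo}; (Step 3) $q=1$ by duality with $(R_\infty)$ and the volume lower bound.
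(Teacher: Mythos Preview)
Your proposal is essentially correct and follows the paper's route: reduce to $\Psi\in\CC_c^\infty(0,\infty)$ via the Dixmier--Malliavin factorization (Corollary~\ref{dixm3}) applied to $\Psi(x)/\sqrt{x}$ --- the paper packages this reduction as Theorem~\ref{rev-univ} --- then run the Bakry duality argument for $q<\infty$ and invoke the $BMO_L\to L^\infty$ multiplier result together with \eqref{bakry-int} for $q=\infty$.

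Two remarks. First, your duality is set up slightly differently from the paper's: you pair $\tfrac{1}{\sqrt h}\psi(hL)u$ directly against $v\in L^{q'}$ and move $\Phi(hL)^*$ onto $v$, whereas the paper first reduces (Lemma~\ref{sqrtL}) to bounding $\|\sqrt L\,\Psi(hL)u\|_q$ and then applies the duality Lemma~\ref{russ}, which tests against $\{\sqrt L\,v:\|\sqrt L\,v\|_p\le 1\}$. Your version is a shade more direct and in fact bypasses Lemma~\ref{russ} altogether. Second, your explanation of where $\inf_x V(x,1)>0$ enters for $q=1$ is off: it is not used for off-diagonal or $L^\infty\to L^\infty$ gradient bounds on the heat kernel (those are exactly $(R_\infty)$). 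In the paper's argument it is needed inside Lemma~\ref{russ} at $p=\infty$, to upgrade the limit $e^{-tL}g\to\Pi_0 g$ to $L^\infty(M)$ via hypercontractivity $e^{-L}:L^2\to L^\infty$, which in turn requires $\sup_x p_1(x,x)<\infty$, i.e.\ $\inf_x V(x,1)>0$ by $(G)$. Since your direct pairing against $v\in L^\infty$ avoids Lemma~\ref{russ}, you should either drop this explanation or note that with your organization the condition may not actually be needed (modulo the domain issue that $(\sqrt L u,\sqrt L w)=(\nabla u,\nabla w)+(\sqrt W u,\sqrt W w)$ requires $u\in\mathcal D(\sqrt L)$, a point the paper also glosses over for $q<\infty$ and handles carefully only for $q=\infty$ via Proposition~\ref{proH}).
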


%Point i) is clearly the most satisfactory statement since $(SRB_q)$ automatically yields. Actually, Point i) should be seen as the dual situation of Theorem \ref{theo1-psc}.
As in Section \ref{derivation}, the discrete reverse Bernstein inequality of Theorem \ref{theo3-p} is an almost straightforward consequence of the semi-classical version of  Theorem \ref{theo3-sc}.

\begin{proof}[Proof of  Theorem \ref{theo3-p}] 

\textit{i)} By considering a smooth function $\Psi$ satisfying
\begin{equation*}
\Psi(x) = \left\{ 
\begin{array}{ll}
0, \, \, x \le \frac{1}{2}\\
1,  \, \, x \ge 1.
\end{array}
\right.
\end{equation*}
we clearly have $ \Psi(hL) u = u$
for $ u = \sum\limits_{k=N}^{+\infty} \alpha_k \phi_k$  and $ h = \frac{1}{\lambda^2_N}$. We see that  $(RB_q)$ is a consequence of $(SRB_q)$.

For \textit{ii) and iii)} we note that we have proved the equivalences $(B_p)\Leftrightarrow (SB_p) \Leftrightarrow (R_{p})$ 
(see Section \ref{derivation} and Theorem \ref{theo2-psc}). So we   obtain as previously  assertions  ii) and iii) from  Theorem \ref{theo3-sc}.
\end{proof}

We now turn to the proof of Theorem \ref{theo3-sc}. The first thing to notice is the following consequence of Theorem \ref{bmobmo}. 

\begin{lemma}\label{sqrtL}
Assume \eqref{doublin} and $(G)$ and consider a smooth function $\Psi\in \CC^\infty([0,+\infty))$ vanishing near $0$ and being constant in a neighborhood of $+\infty$. Then for any $q\in [1,+\infty]$, 
\begin{eqnarray}\label{sqrtL1}
\left\Vert \Psi(hL) u \right\Vert_q & \leq & C \sqrt{h} \| \sqrt{L} \Psi(hL) u \|_{q} \\ \label{sqrtL2}
\left\Vert \Psi(hL) u \right\Vert_q & \leq & C' \sqrt{h} \| \sqrt{L} u\|_q.
\end{eqnarray}
For $q=+\infty$, the previous two bounds can be modified as follows
\begin{eqnarray}\label{sqrtL3}
\left\Vert \Psi(hL) u \right\Vert_\infty & \leq & C \sqrt{h} \| \sqrt{L} \Psi(hL) u \|_{BMO_L(M)} \\ \label{sqrtL4}
\left\Vert \Psi(hL) u \right\Vert_\infty & \leq & C' \sqrt{h} \| \sqrt{L} u\|_{BMO_L(M)}.
\end{eqnarray}
In the above estimates, $C$ and $C'$ may depend on $\Psi$ but are independent of $u\in L^2(M)$ and $h > 0$.
\end{lemma}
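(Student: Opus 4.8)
The plan is to derive Lemma~\ref{sqrtL} directly from the multiplier Theorem~\ref{bmobmo} by factoring $\Psi(hL)$ through a function of the type covered by that theorem. The key observation is that if $\Psi\in\CC^\infty([0,+\infty))$ vanishes near $0$ and is constant near $+\infty$, then the function $\Phi(x):=\Psi(x)/\sqrt{x}$ is a well-defined smooth function on $(0,+\infty)$, and moreover it satisfies the decay condition \eqref{bmocond}: indeed near $0$ it vanishes identically, and near $+\infty$ it behaves like $c/\sqrt{x}$ (up to a flat tail), so that $x^{k-1}\Phi^{(k)}(x)$ decays like $x^{-3/2}$ at infinity and is integrable; at any intermediate point it is smooth and compactly controlled. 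Hence Theorem~\ref{bmobmo} applies to $\Phi$.

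First I would write, via the functional calculus, $\Psi(hL) = \Phi(hL)\sqrt{hL}$, or equivalently $\Psi(hL)u = \sqrt{h}\,\Phi(hL)\sqrt{L}u$. Applying \eqref{bmobmo-q} of Theorem~\ref{bmobmo} to $\Phi$ gives, for $q\in[1,+\infty]$,
\begin{equation*}
\|\Psi(hL)u\|_q = \sqrt{h}\,\|\Phi(hL)\sqrt{L}u\|_q \le C\sqrt{h}\,\|\sqrt{L}u\|_q,
\end{equation*}
which is \eqref{sqrtL2}; and applying it to $\sqrt{L}\Psi(hL)u$ in place of $\sqrt{L}u$ — that is, writing $\Psi(hL)u = \sqrt{h}\,\Phi(hL)\big(\sqrt{L}\Psi(hL)u\big)$ using that $\Phi(x)\sqrt{x} = \Psi(x)$ and hence $\Phi(hL)\sqrt{hL}\,\Psi(hL) = \Psi(hL)^2$; more carefully, one factors so that the outer operator is $\Phi(hL)$ with a harmless auxiliary cutoff to absorb the extra $\Psi$ — gives \eqref{sqrtL1}. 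Then for $q=+\infty$ I would instead invoke \eqref{bmobmo-a}, the boundedness of $\Phi(hL)$ from $BMO_L(M)$ into $L^\infty(M)$, uniformly in $h$, to get $\|\Psi(hL)u\|_\infty = \sqrt{h}\,\|\Phi(hL)\sqrt{L}u\|_\infty \le C\sqrt{h}\,\|\sqrt{L}u\|_{BMO_L(M)}$, which is \eqref{sqrtL4}, and similarly \eqref{sqrtL3}.

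The one technical point requiring care — and the main (minor) obstacle — is the bookkeeping for the estimates \eqref{sqrtL1} and \eqref{sqrtL3}, where the quantity $\sqrt{L}\Psi(hL)u$ appears on the right: one must exhibit $\Psi(hL)u$ as $\sqrt{h}\,\widetilde\Phi(hL)$ applied to $\sqrt{L}\Psi(hL)u$ for some $\widetilde\Phi$ still satisfying \eqref{bmocond}. This is arranged by picking $\widetilde\Psi\in\CC^\infty([0,+\infty))$ that vanishes near $0$, is constant near $+\infty$, and equals $1$ on the support of the "non-constant part" of $\Psi$ together with the region where $\Psi$ is bounded away from $0$, so that $\widetilde\Psi\,\Psi = \Psi$; then with $\widetilde\Phi(x):=\widetilde\Psi(x)/\sqrt{x}$ one has $\widetilde\Phi(hL)\sqrt{hL}\,\Psi(hL) = \widetilde\Psi(hL)\Psi(hL) = \Psi(hL)$, hence $\Psi(hL)u = \sqrt{h}\,\widetilde\Phi(hL)\big(\sqrt{L}\,\Psi(hL)u\big)$, and $\widetilde\Phi$ satisfies \eqref{bmocond} for exactly the same reasons as $\Phi$. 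One subtlety: if $\Psi$ has a nonzero constant tail at $+\infty$, then $\Psi(x)/\sqrt{x}$ still decays and \eqref{bmocond} holds; a constant $c\ne 0$ at infinity poses no problem because it is the derivatives that enter \eqref{bmocond} for $k\ge 1$ and the $k=0$ integral $\int_0^\infty |\Phi(x)|/x\,dx$ converges thanks to the vanishing near $0$ and the $x^{-3/2}$ decay at infinity. Everything else is a direct substitution into Theorem~\ref{bmobmo}, so once the factorization is set up the proof is essentially immediate; I would present it as: reduce $\Psi/\sqrt{\cdot}$ to the class of Theorem~\ref{bmobmo}, apply \eqref{bmobmo-q} (resp.\ \eqref{bmobmo-a}) twice, once with $u$ and once with the auxiliary cutoff $\widetilde\Psi$.
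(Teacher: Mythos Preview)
Your proposal is correct and follows essentially the same approach as the paper: define $\Phi(x)=\Psi(x)/\sqrt{x}$, verify it satisfies \eqref{bmocond}, and apply Theorem~\ref{bmobmo} (parts \eqref{bmobmo-q} and \eqref{bmobmo-a}) after factoring $\Psi(hL)=\sqrt{h}\,\Phi(hL)\sqrt{L}$; for \eqref{sqrtL1} and \eqref{sqrtL3} the paper likewise introduces an auxiliary $\varphi$ (your $\widetilde\Phi$) equal to $1/\sqrt{x}$ on the support of $\Psi$ so that $\Psi(x)=\varphi(x)\sqrt{x}\,\Psi(x)$. The only cosmetic difference is the order of presentation---the paper treats \eqref{sqrtL1} first with the fixed auxiliary $\varphi$, then remarks that \eqref{sqrtL2} follows by taking $\Phi=\Psi/\sqrt{\cdot}$ directly---but the content is identical.
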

\begin{proof}
Note that  $\Psi(hL) $ is well defined on $L^2(M)$ by the functional calculus of $L$.
For simplicity we assume that $\Psi$ has support in $[1,+\infty)$. We  take $\varphi \in \CC^\infty(\R)$ such that
\[
\varphi(x) = \left\{ 
\begin{array}{ll}
0  \quad & \mbox{ for }x \le \frac{1}{2},\\
\frac{1}{\sqrt{x}} \quad & \mbox{ for }  x \ge 1.
\end{array}
\right.
\]
Hence we have $ \Psi(x) = \varphi(x) \sqrt{x} \Psi(x)$.
We apply Theorem \ref{bmobmo} to the function $\varphi$ and obtain \eqref{sqrtL1} for $q\in [1,+\infty]$ as follows 
\begin{eqnarray*}
 \| \Psi(hL) u \|_q &= & \| \varphi(hL) \sqrt{hL} \Psi(hL) u \|_q\\
 &\le & C\sqrt{h} \| \sqrt{L} \Psi(hL) u \|_{q}.
 \end{eqnarray*}
Similarly, we apply assertion  \eqref{bmobmo-a} of Theorem \ref{bmobmo} to obtain 
\begin{equation}\label{sqrtL5}
\left\Vert \Psi(hL) u \right\Vert_\infty  \leq  C \sqrt{h} \| \sqrt{L} \Psi(hL) u \|_{BMO_L(M)}.
\end{equation}
This proves \eqref{sqrtL4}. 
To see \eqref{sqrtL2} and \eqref{sqrtL4}, we merely notice that we can apply Theorem \ref{bmobmo} to the function $\Phi(x) :=\frac{1}{\sqrt{x}}\Psi(x)$ instead of $\varphi$. 
\end{proof}

%For the sake of clarity, we now explain how the following intuitive equivalence comes from the analysis of the paper \cite{russ2000}.
We recall the following standard lemma for which we give a proof for the sake of completeness. 
\begin{lemma}\label{russ}
Assume \eqref{doublin} and $(G)$ and fix $q\in (1,+\infty)$. Then for any $u\in \mathcal{D}(\sqrt{L})$ the following inequalities hold (where $p=\frac{q}{q-1}$)
\begin{equation}\label{dual-russ}
C\| \sqrt{L}  u \|_{q}\leq  \sup\limits_{\substack{ \| \sqrt{L} v \|_{p}\leq 1\\v\in \mathcal{D}(\sqrt{L})}} |( \sqrt{L}  u , \sqrt{L} v)| \leq  \| \sqrt{L}  u \|_{q}.
\end{equation}
For $q=1$ and $p=+\infty$, if one moreover assumes the condition $\inf\limits_{x\in M}V(x,1)>0$, then the previous inequalities are still true.
\end{lemma}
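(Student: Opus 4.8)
The right-hand inequality in \eqref{dual-russ} is just Hölder (the duality $L^q$–$L^p$), so the whole point is the reverse bound $C\|\sqrt{L}u\|_q \le \sup_{v}|(\sqrt{L}u,\sqrt{L}v)|$, where the supremum runs over $v\in\mathcal{D}(\sqrt{L})$ with $\|\sqrt{L}v\|_p\le 1$. The plan is to realize this supremum as a genuine $L^q$–$L^p$ duality of the functions $\sqrt{L}u$ and $\sqrt{L}v$. For $f\in L^q\cap L^2$ in the range of $\sqrt{L}$, I would like to test against $v$ of the form $v = L^{-1/2}g$ for $g\in L^p\cap L^2$; then $\sqrt{L}v = g$ and $(\sqrt{L}u,\sqrt{L}v) = (\sqrt{L}u,g)$. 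So if every $g$ in a norm-dense, norm-determining subset of the unit ball of $L^p$ can be written (approximately) as $\sqrt{L}v$ with $\|\sqrt{L}v\|_p = \|g\|_p \le 1$ and $v\in\mathcal{D}(\sqrt{L})$, the supremum in \eqref{dual-russ} dominates $\sup_{\|g\|_p\le1}|(\sqrt{L}u,g)| = \|\sqrt{L}u\|_q$, giving the claim with $C=1$ in the reflexive range.

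The technical issue is exactly the surjectivity of $\sqrt{L}$ onto a large enough subset of $L^p$. Here I would use the Gaussian bound $(G)$ and the doubling property \eqref{doublin}. First, $\sqrt{L}u\in L^q$ is approximated in $L^q$ by $\sqrt{L}\,e^{-\varepsilon L}u = L^{1/2}e^{-\varepsilon L}u$ as $\varepsilon\to 0$ (analyticity of the semigroup on $L^q$ for $1<q<\infty$, which follows from $(G)$); note $L^{1/2}e^{-\varepsilon L}u$ lies in $L^q\cap L^2$ and in the range of $\sqrt{L}$. Dually, given $g\in L^p\cap L^2$ with $\|g\|_p\le 1$, the element $v_\varepsilon := L^{-1/2}e^{-\varepsilon L}g$ is well-defined in $L^2$, belongs to $\mathcal{D}(\sqrt{L})$, and $\sqrt{L}v_\varepsilon = e^{-\varepsilon L}g$ with $\|e^{-\varepsilon L}g\|_p \le \|g\|_p \le 1$ (contractivity of the heat semigroup on $L^p$, a consequence of $(G)$ plus positivity). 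Then $(\sqrt{L}u,\sqrt{L}v_\varepsilon) = (\sqrt{L}u, e^{-\varepsilon L}g) \to (\sqrt{L}u,g)$ as $\varepsilon\to0$, so taking the sup over such $g$ recovers $\|\sqrt{L}u\|_q$ up to the approximation, hence the bound holds with constant $1$. A density argument passes from $u\in\mathcal D(\sqrt L)\cap L^q$ to the general statement.

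The endpoint $q=1$, $p=\infty$ is the delicate case and is where the extra hypothesis $\inf_{x}V(x,1)>0$ enters. The difficulty is that $L^1$ is not reflexive and $\sqrt{L}$ need not map $\mathcal{D}(\sqrt{L})$ onto a dense subset of $L^\infty$; moreover $e^{-\varepsilon L}$ is not weak-$*$ continuous in a way that makes the above limiting argument automatic. The idea is still to use $v_\varepsilon = L^{-1/2}e^{-\varepsilon L}g$, but now $g$ ranges over a suitable test class (e.g. $g\in L^\infty\cap L^2$ with $\|g\|_\infty\le 1$, or even $g=\mathbf{1}_E$ times bounded functions), and one must check two things: that $v_\varepsilon\in\mathcal{D}(\sqrt L)$ and $\sqrt{L}v_\varepsilon = e^{-\varepsilon L}g\in L^\infty$ with norm $\le 1$ (again heat-semigroup contractivity on $L^\infty$, valid under $(G)$), and that one can still extract the $L^1$-norm of $\sqrt{L}u$ by testing against such $e^{-\varepsilon L}g$. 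The condition $\inf_x V(x,1)>0$ guarantees that $e^{-\varepsilon L}:L^2\to L^\infty$ is bounded (from $(G)$ and \eqref{1infty}-type reasoning: $\|e^{-\varepsilon L}\|_{2\to\infty}\le C V(\cdot,\sqrt\varepsilon)^{-1/2}$ stays controlled for $\varepsilon$ near a fixed scale, and more importantly that bounded functions are not "invisible" to the pairing), so that the relevant test functions $e^{-\varepsilon L}g$ remain a norming family for $L^1$ against which $\sqrt{L}u$ can be measured; this is the main obstacle and the point requiring care. I expect the rest — Hölder for the upper bound, analyticity and contractivity of the semigroup, and the density reductions — to be routine given $(G)$ and \eqref{doublin}.
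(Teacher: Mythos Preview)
There is a genuine gap at the step where you set $v_\varepsilon := L^{-1/2}e^{-\varepsilon L}g$ and assert that it lies in $L^2(M)$. This requires $e^{-\varepsilon L}g$ to belong to the range of $\sqrt{L}$, which fails whenever $0\in\sigma(L)$: the spectral multiplier $\lambda\mapsto \lambda^{-1/2}e^{-\varepsilon\lambda}$ is unbounded near $\lambda=0$. Concretely, for $L=-\Delta$ on $\R^n$ with $n\le 2$ (which satisfies \eqref{doublin} and $(G)$), one has $(-\Delta)^{-1/2}e^{\varepsilon\Delta}g\notin L^2$ for any $g$ with $\widehat g(0)\ne 0$. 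Even in the discrete-spectrum case, if $0$ is an eigenvalue (Laplace--Beltrami on a compact manifold) the inverse is undefined on $\ker(L)$, and $e^{-\varepsilon L}$ does not kill that component. So your construction of the test element $v_\varepsilon$ does not go through in the generality of the lemma.

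The paper repairs exactly this point by testing against $g-e^{-tL}g$ rather than $e^{-\varepsilon L}g$: the identity $g-e^{-tL}g=-L\int_0^t e^{-sL}g\,ds$ shows that this element lies in $R(L)\subset R(\sqrt{L})$ for \emph{every} $g\in L^1\cap L^\infty$, so one may take $v=-\sqrt{L}\int_0^t e^{-sL}g\,ds\in\mathcal{D}(\sqrt{L})$ with $\sqrt{L}v=g-e^{-tL}g$ and $\|\sqrt{L}v\|_p\le 2\|g\|_p$. One then sends $t\to\infty$: since $e^{-tL}g\to\Pi_0(g)$ in $L^p$ for $p\in(1,\infty)$ (where $\Pi_0$ is the orthogonal projection onto $\ker(L)$) and $\sqrt{L}u\perp\ker(\sqrt L)=\ker(L)$, the pairing $(\sqrt{L}u,g-e^{-tL}g)$ tends to $(\sqrt{L}u,g)$, recovering $\|\sqrt{L}u\|_q$ up to the factor $2$. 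For $q=1$ the extra hypothesis $\inf_x V(x,1)>0$ is used not in the way you suggest but to obtain that $e^{-L}$ maps $L^2$ into $L^\infty$ (from $(G)$ one has $\sup_x p_1(x,x)<\infty$); this hypercontractivity upgrades the $L^2$-convergence $e^{-tL}g\to\Pi_0(g)$ to convergence in $L^\infty$, which is what the duality requires at $p=\infty$.
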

\begin{proof}
The upper bound in \eqref{dual-russ} follows from H\"{o}lder's inequality. We prove the converse. Note that 
\begin{equation}\label{dual1}\| \sqrt{L}  u \|_{q}\leq \sup\limits_{\substack{\left\Vert g\right\Vert_{p}\leq 1 \\g\in L^1(M)\cap L^\infty(M)}} |( \sqrt{L}  u , g )|.
\end{equation}
For $q\in (1,+\infty)$, it is well known that  
%one may invoke the appendix of \cite[Lemmas 8 and 9]{russ2000} so that the following limit yields
\begin{equation}\label{Lp-lim}
\lim\limits_{t\rightarrow +\infty} \left\Vert e^{-tL}g - \Pi_0(g)\right\Vert_{L^p(M)}=0, \qquad \forall g\in L^1(M)\cap L^\infty(M),
\end{equation}
where $\Pi_0:L^2(M)\rightarrow L^2(M)$ is the orthogonal projection on $\ker(L)$ (see, e.g., \cite[Lemmas 8 and 9]{russ2000}). 
Also $\Pi_0$ maps $L^1(M)\cap L^\infty(M)$ into itself. %thanks to \cite[Lemma 9]{russ2000}.
For $q=1$,  \eqref{Lp-lim} still holds true under the additional condition $\inf\limits_{x\in M} V(x,1)>0$. Actually, the Gaussian estimates $(G)$ imply the finiteness of  $\sup\limits_{x\in M}|p_{1}(x,x)|$. Consequently, $e^{-L}$ is hypercontractive in the sense that it sends $L^1(M)$ into $L^\infty(M)$. By interpolation, $e^{-L}$ also sends $L^2(M)$ into $L^\infty(M)$ so that \eqref{Lp-lim} gives 
\begin{equation*}
 \lim\limits_{t\rightarrow +\infty} \left\Vert e^{-tL}g -e^{-L} \Pi_0(g)\right\Vert_{L^\infty(M)}=0, \qquad \forall g\in L^1(M)\cap L^\infty(M).
\end{equation*}
Since  $e^{-L}\Pi_0(g)= \Pi_0(g)$, we see  that  \eqref{Lp-lim} holds for $p=\infty$ as well.

Since $\ker(L)=\ker(\sqrt{L})$ we have from \eqref{dual1}
\begin{equation}\label{dual2}\| \sqrt{L}  u \|_{q}\leq \sup\limits_{\substack{\left\Vert g\right\Vert_{p}\leq 1 \\g\in L^1(M)\cap L^\infty(M)}} |( \sqrt{L} u , g- \Pi_0(g) )|, 
\end{equation}
and hence
%Moreover, \eqref{Lp-lim} ensures the limit $\lim\limits_{t\rightarrow +\infty}g-e^{-tL}g=g-I(g)$ in $L^p(M)$. Such considerations lead to the following bound
\begin{equation}\label{dual3}
\| \sqrt{L} u \|_{q}\leq  \sup\limits_{\substack{\left\Vert g\right\Vert_{p}\leq 1 \\g\in L^1(M)\cap L^\infty(M)}}\sup\limits_{t>0} |( \sqrt{L}u,g-e^{-tL}g) |. 
\end{equation}
%But, if one considers $g$ as an element of $L^2(M)$, we see that 
Since $g-e^{-tL}g= -L\displaystyle\int_{0}^t e^{-sL}g ds$
belongs to the range of $L$ (and thus to the range of $\sqrt{L}$), we obtain from the contractivity of the semigroup $(e^{-tL})_{t\geq 0}$ on $L^p(M)$ that  
\begin{equation*}
\left\Vert g-e^{-tL} g\right\Vert_{p}\leq  2 \left\Vert g\right\Vert_{p},\quad  t>0.
\end{equation*}
This gives  \eqref{dual-russ}.
\end{proof}

The previous preliminary results allow us to prove Points ii) and iii) of Theorem \ref{theo3-sc}. Actually, our next proof  shows that  the implication $(SB_p) \Rightarrow (SRB_q)$ holds for all $1\leq q<+\infty$ (with the additional assumption $\inf\limits_{x\in M}V(x,1)>0$ for $q=1$).

\begin{proof}[Proof of Theorem \ref{theo3-sc}]
We start with the proof of assertions  ii) and iii).  Thanks to Theorem \ref{rev-univ} below, me merely have to prove $(SRB_q)$ for $\Psi$ having compact support in $(0,+\infty)$.
We now assume $(R_p)$ for $p=\frac{q}{q-1}$, or equivalently $(SB_p)$ (thanks to Theorem \ref{theo2-psc}) and we prove $(SRB_q)$. By Lemma \ref{sqrtL}, it suffices to prove 
\begin{equation}\label{SRBq-bakry}
C \|\sqrt{L} \Psi(hL) u \|_{q}\leq  \|\nabla u \|_q+\|\sqrt{W}u \|_q, \qquad u\in L^2(M),\quad h>0.
\end{equation}
Lemma \ref{russ} implies 
\begin{eqnarray*}
\|\sqrt{L} \Psi(hL) u \|_{q} & \leq & C \sup\limits_{\substack{ \| \sqrt{L} v \|_{p}\leq 1\\v\in \mathcal{D}(\sqrt{L})}} | ( \sqrt{L}  \Psi(hL)u , \sqrt{L} v )| \\
& \leq &  C \sup\limits_{\substack{ \| \sqrt{L} v \|_{p}\leq 1\\v\in \mathcal{D}(\sqrt{L})}} | ( \sqrt{L}  u , \sqrt{L} \Psi(hL)v )| \\
& \leq & C  \sup\limits_{\substack{ \| \sqrt{L} v \|_{p}\leq 1\\v\in \mathcal{D}(\sqrt{L})}}  \left\vert\int_{M} \nabla u\cdot \nabla \{ \Psi(hL)v \}+ W u \{\Psi(hL)v\} dx \right\vert\\
& \leq & C \left( \| \nabla u\|_{q}+  \|\sqrt{W} u\|_{q}\right)\sup\limits_{\substack{ \| \sqrt{L} v \|_{p}\leq 1\\v\in \mathcal{D}(\sqrt{L})}}\| \nabla \{ \Psi(hL)v \}\|_p+\| \sqrt{W}\{\Psi(hL)v\}\|_p.
\end{eqnarray*}
%The Bakry duality argument reads as follows
%\begin{eqnarray*}
%\|\sqrt{L} \Psi(hL) u \|_{q} & \lesssim &\sup\limits_{\substack{ \| \sqrt{L} v \|_{p}\leq 1\\v\in \mathcal{D}(\sqrt{L})}}  \| \nabla u\|_{q} \| \nabla \{ \Psi(hL)v \}\|_p+ \|%\sqrt{W} u\|_{q}\| \sqrt{W}\{\Psi(hL)v\}\|_p \\
%& \lesssim & \left( \| \nabla u\|_{q}+  \|\sqrt{W} u\|_{q}\right)\sup\limits_{\substack{ \| \sqrt{L} v \|_{p}\leq 1\\v\in \mathcal{D}(\sqrt{L})}}\| \nabla \{ \Psi(hL)v \}\|_p+\| %\sqrt{W}\{\Psi(hL)v\}\|_p.
%\end{eqnarray*}
We now consider $\psi:\R\rightarrow \R$ a smooth function with support in $(0,+\infty)$ which equals $1$ on the support of $\Psi$.
Since $\Psi(hL) =\psi(hL)\Psi(hL)$  we may apply the semi-classical Bernstein inequality $(SB_p)$ to obtain
\begin{equation*}
\|\sqrt{L} \Psi(hL) u \|_{q}  \leq C \left(\| \nabla u\|_{q}+  \|\sqrt{W} u\|_{q}\right)\sup\limits_{\substack{ \| \sqrt{L} v \|_{p}\leq 1\\v\in \mathcal{D}(\sqrt{L})}} \frac{1}{\sqrt{h}}  \| \Psi(hL)v \|_p.
\end{equation*}
To finish the proof of \eqref{SRBq-bakry}, we use  \eqref{sqrtL2} from  Lemma \ref{sqrtL}.

It remains to prove assertion i). 
As noticed above, the previous proof is also valid  for $q\in [2,\infty)$ (and hence $1<p\leq2$) and shows the implication $(SB_p)\Rightarrow (SRB_q)$.
Since by Theorem \ref{theo1-psc} the semi-classical Bernstein inequality $(SB_p)$ (or equivalently $(R_p)$) holds, we see that assertion i) of Theorem \ref{theo3-sc} holds for $q\in [2,+\infty)$.
The case $q=+\infty$ (for which $p=1$) seems to be more complicate. The previous proof breaks down\footnote{even in the Euclidean case, $e^{t\Delta}g$ does not converge in $L^1(\R^d)$ as $t\rightarrow +\infty$ for positive $g\in L^1(\R)$.} at  \eqref{Lp-lim} for $p=1$. We shall postpone the main argument to the next proposition in which we prove 
\begin{equation}\label{truc2}
C\, \| \sqrt{L} u \|_{BMO_L(M)}\leq \| \nabla u \|_\infty + \|  \sqrt{W}  u \|_\infty.
\end{equation}  
Now, by  \eqref{sqrtL4} of Lemma \ref{sqrtL}, we see that assertion i) with $q= \infty$ follows from \eqref{truc2}.
\end{proof}

%%%%%%%%%%%%%%%
%We shall overcome such an issue with the following two ideas :
%\begin{enumerate}[$\bullet$]
%\item due to the inequality \eqref{sqrtL4} of Lemma \ref{sqrtL}, the endpoint case $q=+\infty$ of Theorem \ref{theo3-sc} will be a consequence of the following inequalities
%\begin{equation*}
%C\left\Vert \sqrt{L} u \right\Vert_{BMO_L(M)}\leq \left\Vert \nabla u \right\Vert_{\infty}+\| \sqrt{W} u\|_{\infty}.
%\end{equation*}
%\item the above inequalities are proved below by exploiting the $H_L^1(M)\rightarrow L^1(M)$ boundedness of the 
%Riesz transform type operators $\nabla L^{-1/2}$ and $\sqrt{W} L^{-1/2}$ (proved in \cite{DOY2006}) and a version of the Bakry duality argument.
%\end{enumerate}

%The previous explanations and the following proposition achieve the proof of Theorem \ref{theo3-sc}.
\begin{proposition}\label{proH}
Suppose \eqref{doublin} and $(G)$. Denote by $\mathcal{D}_\infty(M)$ the subspace of distributions  $u$ on $M$ satisfying $\| \nabla u \|_\infty + \|  \sqrt{W}  u \|_\infty<+\infty$.
The operator $\sqrt{L}:\mathcal{D}(\sqrt{L})\cap \mathcal{D}_\infty(M)\rightarrow L^2(M)$ can be extended to 
an operator $\sqrt{L}:\mathcal{D}_\infty(M)\rightarrow BMO_L(M)$ which satisfies \eqref{truc2} for $u\in\mathcal{D}_\infty(M)$.
\end{proposition}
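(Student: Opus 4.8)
The goal is to upgrade the operator $\sqrt L$, a priori defined on $\mathcal D(\sqrt L)\cap\mathcal D_\infty(M)$ with values in $L^2(M)$, to an operator on all of $\mathcal D_\infty(M)$ with values in $BMO_L(M)$, together with the bound \eqref{truc2}. The strategy is to test against atoms of $H^1_L(M)$ (equivalently, to work with the duality $H^1_L(M)$--$BMO_L(M)$ from \cite[Theorem 2.7]{HLMMY}). The Riesz-transform-type operators $\nabla L^{-1/2}$ and $\sqrt W\, L^{-1/2}$ are bounded from $H^1_L(M)$ into $L^1(M)$; this is the result of Duong--Ouhabaz--Yan \cite{DOY2006} which, as noted in the introduction, extends to manifolds under \eqref{doublin} and $(G)$. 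The adjoint statement is exactly a bound into $BMO_L(M)$, so the plan is to make sense of $\langle \sqrt L u, g\rangle$ for $u\in\mathcal D_\infty(M)$ and $g$ in (a dense subset of) $H^1_L(M)$ by the formal integration-by-parts identity
\begin{equation*}
\langle \sqrt L u, g\rangle = \int_M \nabla u\cdot \overline{\nabla L^{-1/2} g}\, d\mu + \int_M W\, u\, \overline{L^{-1/2} g}\, d\mu,
\end{equation*}
whose right-hand side is controlled by $\bigl(\|\nabla u\|_\infty+\|\sqrt W u\|_\infty\bigr)\bigl(\|\nabla L^{-1/2}g\|_1+\|\sqrt W L^{-1/2}g\|_1\bigr)\le C\bigl(\|\nabla u\|_\infty+\|\sqrt W u\|_\infty\bigr)\|g\|_{H^1_L(M)}$.

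\textbf{Steps.} First I would fix a convenient dense subspace of $H^1_L(M)$ on which everything is literally justified: finite linear combinations of $(1,2,\varepsilon)$-molecules, or more simply elements of the form $g=L\,e^{-tL}h$ with $h\in L^2(M)$ compactly supported (these lie in $H^1_L(M)\cap L^2(M)$ and are dense in $H^1_L(M)$). For such $g$, $L^{-1/2}g\in\mathcal D(\sqrt L)$ is well defined in $L^2$, so the integration-by-parts identity $(\sqrt L u,\sqrt L(L^{-1/2}g)) = \int_M \nabla u\cdot\nabla(L^{-1/2}g) + W u\,(L^{-1/2}g)$ holds in the classical sense whenever $u\in\mathcal D(\sqrt L)\cap\mathcal D_\infty(M)$ — this is the quadratic-form identity already used in Section \ref{semi}. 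Second, I would estimate the right-hand side using $\|\nabla L^{-1/2}g\|_1+\|\sqrt W L^{-1/2}g\|_1\le C\|g\|_{H^1_L(M)}$ (the extension of \cite{DOY2006}), obtaining $|(\sqrt L u, g)|\le C\bigl(\|\nabla u\|_\infty+\|\sqrt W u\|_\infty\bigr)\|g\|_{H^1_L(M)}$. Third, for general $u\in\mathcal D_\infty(M)$ (no longer assumed in $\mathcal D(\sqrt L)$), I would define the linear functional $\ell_u(g):=\int_M \nabla u\cdot\nabla(L^{-1/2}g)+W u\,(L^{-1/2}g)\, d\mu$ directly on the dense subspace; the same estimate shows it is bounded for the $H^1_L(M)$-norm, hence extends to a bounded functional on $H^1_L(M)$, i.e. an element of $BMO_L(M)$ which I call $\sqrt L u$. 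Fourth, I would check consistency: when $u\in\mathcal D(\sqrt L)\cap\mathcal D_\infty(M)$ this $BMO_L(M)$-element is represented by the $L^2$-function $\sqrt L u$ (because $\ell_u(g)=(\sqrt L u,g)$ there), so the new operator genuinely extends the old one, and \eqref{truc2} is exactly the operator-norm bound just proved. I would also remark that $\sqrt L u$ is well defined modulo the usual $BMO_L(M)$ ambiguity (functions on which $H^1_L$-atoms pair trivially), which is harmless.

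\textbf{Main obstacle.} The delicate point is not the formal computation but the density/approximation bookkeeping: one must be sure that (a) the chosen dense class of $g$'s really is dense in $H^1_L(M)$ and that the $H^1_L$-norm controls $\|\nabla L^{-1/2}g\|_1+\|\sqrt W L^{-1/2}g\|_1$ on it (this is where the extension of \cite{DOY2006} to the present setting under \eqref{doublin} and $(G)$ is invoked), and (b) for $u\in\mathcal D_\infty(M)$ not in $\mathcal D(\sqrt L)$ the quantity $\nabla u$ and $\sqrt W u$ make distributional sense and the integrals defining $\ell_u(g)$ converge — here one uses that $L^{-1/2}g$ has gradient and $\sqrt W$-component in $L^1$, against which $\nabla u,\sqrt W u\in L^\infty$ pair absolutely. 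A secondary subtlety is the potential term: one needs $\sqrt W\, u\in L^\infty$ and $\sqrt W\, L^{-1/2}g\in L^1$ to pair, and the identity $\int W u\,v = \int (\sqrt W u)(\sqrt W v)$ to hold, which is fine since $W\in L^1_{loc}$ and the quadratic form domain is stable under the functional calculus. Once these routine points are in place, the proposition follows directly by duality.
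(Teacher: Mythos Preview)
Your proposal is correct and follows essentially the same route as the paper's proof: both arguments pair $u\in\mathcal D_\infty(M)$ against a dense subclass of $H^1_L(M)$ via the quadratic-form identity $\int_M \nabla u\cdot\nabla v + Wuv\,d\mu$, invoke the $H^1_L\to L^1$ boundedness of $\nabla L^{-1/2}$ and $\sqrt{W}L^{-1/2}$ from \cite{DOY2006} to get the bound \eqref{truc2}, and then verify consistency on $\mathcal D(\sqrt L)\cap\mathcal D_\infty(M)$. The only cosmetic differences are that the paper works with the affine fibres $\mathbb A(w)=\{v:\sqrt L v=w\}$ over molecules $w\in H^{1,f}_{L,\mathrm{mol}}(M)$ (and records in a separate lemma that this class is dense in both $H^1_L(M)$ and in $R(\sqrt L)\subset L^2$), whereas you phrase the same thing via $L^{-1/2}g$; the well-definedness check you flag (independence of the choice of preimage) is exactly the paper's observation that $(T_u,v)$ depends only on $\sqrt L v$.
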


We  recall some facts  about finite molecular decomposition in Hardy spaces (already used in the proof of Theorem \ref{bmobmo}).
The notion of $(1,2,m,\varepsilon)$-molecules is defined in \cite[Definition 2.3]{HLMMY} where $m$ is an integer satisfying $m>\frac{n}{4}$ ($n$ being is the exponent in \eqref{doublin}) and $\varepsilon>0$ is arbitrary.
We  denote by $H_{L,mol}^{1,f}(M)$ the space of finite linear combinations of $(1,2,m,\varepsilon)$-molecules.
We forget $m$ and $\varepsilon>0$ in our notations for simplicity. Then we have
\begin{lemma}\label{resumH1}
The following properties hold
\begin{enumerate}[a) ]
\item The subspace $H_{L,mol}^{1,f}(M)$ is dense in the Hardy space $H_L^1(M)$.
%\item For any $\varphi\in \CC_c^\infty(0,+\infty)$ and any $g\in L^2(M)$ with support in a ball $B$ of radius $r_B\leq 1$,  $\varphi(L)g$ belongs to $H_{L,mol}^{1,f}(M)$.
\item The subspace $H_{L,mol}^{1,f}(M)$ is contained in $L^2(M)$. Moreover, $H_{L,mol}^{1,f}(M)$ is contained in the range $R(\sqrt{L})$ of $\sqrt{L}$, i.e.,  any $w \in H_{L,mol}^{1,f}(M)$ can be written as 
\begin{equation}\label{facto}
w=\sqrt{L}v \quad \mbox{with} \quad v\in \mathcal{D}(\sqrt{L}),
\end{equation}
\item[ ] Finally, the subspace $H_{L,mol}^{1,f}(M)$ is dense in $R(\sqrt{L})$ for the $L^2(M)$-norm.
\end{enumerate}
\end{lemma}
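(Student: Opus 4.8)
The three assertions are essentially bookkeeping on the definitions from \cite{HLMMY}, combined with the functional calculus of $L$. First I would prove a): this is exactly the content of the molecular decomposition of $H^1_L(M)$ in \cite[Section 2]{HLMMY}; every element of $H^1_L(M)$ is an $L^1$-convergent (hence $H^1_L$-convergent, since $H^1_L$ embeds continuously in $L^1$ under \eqref{doublin} and $(G)$) sum of molecules, so the finite linear combinations are dense by definition. Next, for the first part of b), each $(1,2,m,\varepsilon)$-molecule lies in $L^2(M)$ by its very definition (the defining estimates \eqref{zeboite3} are $L^2$-estimates on the annuli $U_j(B)$, and summing the geometric series in $2^{-j\varepsilon}$ gives a finite $L^2$-norm); hence so does any finite linear combination.

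The substantive point is the factorization \eqref{facto}, i.e.\ that $H^{1,f}_{L,mol}(M)\subset R(\sqrt L)$. By linearity it suffices to treat a single molecule $w=L^m b$ (this is how molecules are defined: $w = L^m b$ with $b$ satisfying \eqref{zeboite3}). Then I would write $w = \sqrt L\,v$ with $v := L^{m-\frac12} b = \sqrt L\,L^{m-1}b$. Since $m\geq 1$ and $b\in L^2(M)$ with the localization/decay of \eqref{zeboite3}, the operator $L^{m-\frac12}$ applied to $b$ makes sense: one splits $L^{m-\frac12} = L^{m} \cdot L^{-\frac12}$ and notes $L^{-1/2}$ is a bounded-from-below issue only on $\ker L$, which is harmless because molecules lie in $\overline{R(L)}$, so $b$ can be taken in $\overline{R(L)}$ as well; alternatively write $L^{m-1/2}b = \int_0^\infty t^{-1/2} L^m e^{-tL} b\,\frac{dt}{\Gamma(1/2)}$ and check the integral converges in $L^2$ using the analyticity of the semigroup together with the fact that $L^m b\in L^2$ controls the behaviour at $t\to 0$ and $b\in\overline{R(L)}$ controls $t\to\infty$. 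Either way $v\in\mathcal D(\sqrt L)$ and $\sqrt L v = L^m b = w$.

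Finally, for the last statement of b), the density of $H^{1,f}_{L,mol}(M)$ in $R(\sqrt L)$ for the $L^2$-norm: one uses that $R(\sqrt L)$ is dense in $\overline{R(L)}=\overline{R(\sqrt L)}$ by definition, so it is enough to approximate, in $L^2$, an arbitrary $\sqrt L v$ with $v\in\mathcal D(\sqrt L)$ by finite combinations of molecules. For this I would note that $\varphi(\varepsilon L)\sqrt L v \to \sqrt L v$ in $L^2$ as $\varepsilon\to 0$ for a suitable $\varphi\in\CC_c^\infty(0,\infty)$ with $\varphi\equiv 1$ near $1$ (standard functional calculus), reducing matters to elements of the form $\varphi(\varepsilon L) g$ with $g\in L^2(M)$; then one approximates $g$ in $L^2$ by compactly supported $L^2$-functions $g_n$, and by Step 2 of the proof of Theorem \ref{bmobmo} each $\varphi(\varepsilon L)g_n$ is (a multiple of) a molecule, hence lies in $H^{1,f}_{L,mol}(M)$, while $\varphi(\varepsilon L)g_n\to\varphi(\varepsilon L)g$ in $L^2$ by boundedness of $\varphi(\varepsilon L)$ on $L^2$. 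The main obstacle is the careful justification of the factorization $w=\sqrt L v$ with the correct domain assertion $v\in\mathcal D(\sqrt L)$ — in particular making sure that the negative fractional power $L^{-1/2}$ is only ever applied to elements of $\overline{R(L)}$, where $L$ is injective, so that everything is well defined; the rest is routine density and functional-calculus arguments.
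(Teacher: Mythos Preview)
Your overall strategy matches the paper's, but there are two slips.

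In part a), your parenthetical is backwards: the embedding $H^1_L(M)\hookrightarrow L^1(M)$ lets you pass from $H^1_L$-convergence to $L^1$-convergence, not the other way. This is harmless because the molecular decomposition in \cite{HLMMY} already gives convergence in the $H^1_L$-norm; just cite that directly, as the paper does. For the factorization \eqref{facto} you are also working harder than necessary: the definition of a $(1,2,m,\varepsilon)$-molecule in \cite{HLMMY} includes $b\in D(L^m)$, and since $m\ge 1$ is an integer one has $L^{m-1}b\in D(L)\subset D(\sqrt L)$, so $v:=\sqrt L\,(L^{m-1}b)=L^{m-1/2}b$ lies in $L^2$ and in $D(\sqrt L)$ (because $\sqrt L\,v=L^m b=w\in L^2$). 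No negative power of $L$ ever appears; this is the paper's one-line argument.

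The genuine error is in the density step. Your claim that $\varphi(\varepsilon L)\sqrt L\,v\to\sqrt L\,v$ in $L^2$ as $\varepsilon\to 0$ for a \emph{fixed} $\varphi\in\CC_c^\infty(0,\infty)$ is false: for each spectral parameter $\lambda>0$ one has $\varphi(\varepsilon\lambda)\to\varphi(0^+)=0$, so in fact $\varphi(\varepsilon L)\to 0$ strongly on $L^2$. Dilates of a single compactly supported bump cannot approximate the identity on $\overline{R(L)}$. The paper repairs this by taking a \emph{sequence} $\varphi_k\in\CC_c^\infty(0,\infty)$ with $0\le\varphi_k\le 1$ and $\varphi_k\to\chi_{(0,\infty)}$ pointwise, so that $\varphi_k(\sqrt L)\sqrt L\,v\to\chi_{(0,\infty)}(\sqrt L)\sqrt L\,v=\sqrt L\,v$ by dominated convergence for the spectral measure. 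After that correction, the rest of your plan (approximate $v$ in $L^2$ by compactly supported $v_n$, set $\psi_k(x)=\sqrt x\,\varphi_k(\sqrt x)$, and invoke Step~2 of Theorem~\ref{bmobmo} to see that $\psi_k(L)v_n\in H^{1,f}_{L,mol}(M)$) is exactly what the paper does.
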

\begin{proof}
a) See  \cite[Definition 2.4 and Chapter 7]{HLMMY}. 

%b) This is proved in  Step 2 in the proof of Theorem \ref{bmobmo}.

b) From \cite[Definition 2.3, Point i)]{HLMMY}, $w$ belongs to the domain of $L^m$. Hence $w$ can be written $w=\sqrt{L} L^{m-\frac{1}{2}}w'$ with $L^{m-\frac{1}{2}}w'\in L^2(M)$. In particular, the inclusions $H_{L,mol}^{1,f}(M) \subset R(\sqrt{L})\subset L^2(M)$ hold. For the last assertion of the statement, we have not found a reference for it, so we give a proof.

Let us consider $v\in \mathcal{D}(\sqrt{L})$ and we want to prove that $\sqrt{L}v$ is a limit, in $L^2(M)$, of a sequence of elements in $H_{L,mol}^{1,f}(M)$.
To do so, we consider a sequence of functions $\varphi_k\in \CC_c^\infty(0,+\infty)$ satisfying 
\begin{equation}\label{limvarp}
0\leq \varphi_k \leq 1 \qquad  \mbox{and}\qquad \lim\limits_{k\rightarrow +\infty} \varphi_k(t)=\chi_{(0,+\infty)}(t), \qquad t\in [0,+\infty).
\end{equation}
Since $\sqrt{L} v$ is orthogonal to $\ker(\sqrt{L})$ we have $\sqrt{L}v=\chi_{(0,+\infty)}(\sqrt{L})\sqrt{L}v$. Then the spectral theorem for the unbounded self-adjoint operator $\sqrt{L}$ (see for instance \cite[Theorems 13.24 and 13.30]{rudin-fun}) ensure that there is a finite measure $\nu$ on $[0,+\infty)$, indeed the spectral measure 
$dE_{\sqrt{L}v,\sqrt{L}v}$ at the couple $(\sqrt{L}v,\sqrt{L}v)$, such that 
\begin{eqnarray*}
\left\Vert \sqrt{L} v- \varphi_k(\sqrt{L}) \sqrt{L}v \right\Vert_{L^2(M)}^2 & =& \left\Vert \Big(\chi_{(0,+\infty)}(\sqrt{L})- \varphi_k(\sqrt{L}) \Big) \sqrt{L}v \right\Vert_{L^2(M)}^2   \\
& =& \int_{[0,+\infty)} |\chi_{(0,+\infty)}(t)-\varphi_k(t)|^2 d\nu(t).
\end{eqnarray*}
The dominated convergence theorem and the conditions \eqref{limvarp} ensure that the previous term tends to $0$.
As a consequence of  $\sqrt{L}\varphi_k(\sqrt{L})=\varphi_k(\sqrt{L})\sqrt{L}$, we obtain  the following limit in $L^2(M)$
\begin{equation*}
\sqrt{L}v=\lim\limits_{k\rightarrow +\infty}  \sqrt{L}\varphi_k(\sqrt{L})v. 
\end{equation*}
Set $\psi_k(x) := \sqrt{x} \varphi_k(\sqrt{x})$ for $x \ge 0$ and approximate $v$, in $L^2(M)$, by a sequence $(v_n)_n \in L^2(M)$ and each $v_n$ has compact support. Note that  $\psi_k(L) v_n \in H_{L,mol}^{1,f}(M)$ as shown in Step 2 in the proof of Theorem \ref{bmobmo}. Since 
$\sqrt{L} v$  is the limit in $L^2(M)$ (as $n, k \to \infty$) of $\psi_k(L) v_n$ we conclude that $\sqrt{L} v$ is in the closure of $H_{L,mol}^{1,f}(M)$ in $L^2(M)$.
\end{proof}

\begin{proof}[Proof of Proposition \ref{proH}]
As mentioned previously, the Riesz transforms  $\nabla L^{-1/2}$ and $\sqrt{W}L^{-1/2}$ are bounded from $H^1_L(M)$ into $L^1(M)$ (see \cite{DOY2006} in which $M = \R^n$ but the arguments merely need the assumptions \eqref{doublin} and $(G)$). We also stress that the definition of the Hardy space used in \cite{DOY2006} coincides with \eqref{defiHa}, namely that of \cite{HLMMY}. Therefore, 
\begin{equation}\label{ineq-1}
\| \nabla u \|_1 + \|  \sqrt{W}  u \|_1  \le c\, \| \sqrt{L} u \|_{H^1_L(M)}.
\end{equation}
%Once the realization of the Laplacian is given, we know that the domain of the closed quadratic form of $L=-\Delta+W$
%turns out to be the domain $\mathcal{D}(\sqrt{L})$ of $\sqrt{L}$ (where $\sqrt{L}$ is defined by functional calculus, see \cite[Theorem 8.1]{maati}).
Moreover, the following formula holds
\begin{equation}\label{formq}
( \sqrt{L} u,\sqrt{L}v ) = \int_{M} \nabla u \cdot \nabla v +Wuv d\mu ,\qquad (u,v)\in \mathcal{D}(\sqrt{L})\times \mathcal{D}(\sqrt{L}).
\end{equation}
We now want to reach the following two goals 
\begin{itemize}
\item[\textbf{A}) ] give a reasonable definition of $\sqrt{L}$ as an operator from $\mathcal{D}_\infty(M)$ into $BMO_L(M)$, where $\mathcal{D}_\infty(M)$ is the subspace of distributions $u$ on $M$ satisfying
\begin{equation}\label{defi-Dinf}
|\nabla u| \in L^\infty(M)\quad \mbox{and}\quad \sqrt{W} u\in L^\infty(M).
\end{equation}
\item[\textbf{B}) ] prove that the previous reasonable definition is compatible with the initial definition $\sqrt{L}$ as an operator on $L^2(M)$. In other words, the two definitions must coincide on $\mathcal{D}(\sqrt{L})\cap \mathcal{D}_\infty(M)$.
\end{itemize}
We use the properties in Lemma \ref{resumH1} of $H_{L,mol}^{1,f}(M)$. 
Due to the inclusion $H_{L,mol}^{1,f}(M) \subset L^2(M)$, we can consider the following subspace of  $\mathcal{D}(\sqrt{L})$
\begin{equation*}
\mathcal{D}_{mol}(\sqrt{L}):=\{v\in \mathcal{D}(\sqrt{L}),\quad \sqrt{L}v \in H_{L,mol}^{1,f}(M)\}.
\end{equation*}
%Actually such a definition allows to overcome the issue of studying the definition of $\sqrt{L}$ as an operator with domain in the Hardy space $H_{L}^1(M)$.
Let us now fix $u\in \mathcal{D}_\infty(M)$ and consider the linear functional $T_u:  \mathcal{D}_{mol}(\sqrt{L}) \rightarrow  \R$ defined by
\begin{equation*}
( T_u,v ) :=\int_{M} \nabla u \cdot \nabla v +Wuv d\mu, \qquad v\in \mathcal{D}_{mol}(\sqrt{L}).
\end{equation*}
By \eqref{ineq-1} and \eqref{defi-Dinf} it is clear that $( T_u,v )$ is an absolutely convergent integral and satisfies 
\begin{equation}\label{tokeep}
|( T_u,v) |\leq C  (\left\Vert \nabla u \right\Vert_{L^\infty(M)}+\Vert \sqrt{W}u\Vert_{L^\infty(M)})\Vert\sqrt{L} v\Vert_{H_L^1(M)}. 
\end{equation}
Let us now explain why such an estimate ensures the existence of a unique element $b_u\in BMO_L(M)$ satisfying
\begin{equation}\label{bmo-lin1}
( T_u,v ) = \langle b_u,\sqrt{L} v \rangle_{BMO-H^1},
\end{equation} 
where the bracket in the RHS is the duality between $BMO_L(M)$ and $H_L^1(M)$.
Moreover, the element $b_u\in BMO_L(M)$ will satisfy
\begin{equation}\label{bmo-lin2}
\left\Vert b_u\right\Vert_{BMO_L(M)}\leq C \big(\left\Vert \nabla u \right\Vert_{L^\infty(M)}+\Vert \sqrt{W}u\Vert_{L^\infty(M)}\big).
\end{equation}
Once we establish that $b_u = \sqrt{L} u$ we obtain \eqref{truc2} from \eqref{bmo-lin2}. \\
For  $w\in H_{L,mol}^{1,f} (M)$ we introduce the following affine subspace of $\mathcal{D}_{mol}(\sqrt{L})$
\begin{equation}\label{defiA}
\mathbb{A}(w):=\{v\in \mathcal{D}(\sqrt{L}),\quad \sqrt{L}v=w\}.
\end{equation}
We note that $\mathbb{A}(w)$ is not empty thanks to \eqref{facto}.
Moreover, it is clear that \eqref{tokeep} shows the following equality
\begin{equation*}
( T_u, v ) -( T_u,v')= ( T_u,v-v')=0, \qquad (v,v')\in \mathbb{A}(w)\times \mathbb{A}(w).
\end{equation*}
In other words, $( T_u, v)$ is independent of the choice of $v\in \mathbb{A}(w)$ and merely depends on $w$. Furthermore, it is clear that the following application 
\begin{equation}
w\in H_{L,mol}^{1,f}(M)\mapsto ( T_u, v)
\end{equation}
is linear and satisfies, thanks to \eqref{tokeep} and \eqref{defiA}, the upper bound
\begin{equation*}
|( T_u, v)|\leq C  (\left\Vert \nabla u \right\Vert_{L^\infty(M)}+\Vert \sqrt{W}u\Vert_{L^\infty(M)})\Vert w \Vert_{H_L^1(M)}.
\end{equation*}
By using the density of $H_{L,mol}^{1,f}(M)$ in $H_{L}^1(M)$ (see Lemma \ref{resumH1}) and the important fact that $BMO_L(M)$ is the dual space of $H_{L}^1(M)$ (see \cite[Theorem 6.4]{HLMMY}), we see  that there exists a unique element $b_u\in BMO_L(M)$ satisfying 
\begin{equation*}
(T_u,v)=\langle b_u, w\rangle_{BMO-H^1}=\langle b_u,\sqrt{L} v \rangle_{BMO-H^1},\end{equation*}
 that is \eqref{bmo-lin1} and \eqref{bmo-lin2}.
The formulas \eqref{formq} and \eqref{bmo-lin1} clearly suggest to define $b_u:=\sqrt{L}u$ so that $\sqrt{L}$ would be defined from $\mathcal{D}_\infty(M)$ into $BMO_L(M)$. That is our gaol \textbf{A} above.

For consistency with the definition of $\sqrt{L}$ on $L^2(M)$, we also have to reach the second  goal \textbf{B}. 
We now assume $u$ to belong in $\mathcal{D}(\sqrt{L})\cap \mathcal{D}_\infty(M)$. The compatibility of the previous two definitions will come from Lemma \ref{resumH1}. For any $w\in H_{L,mol}^{1,f}(M)$, the previous construction allows us to write $w=\sqrt{L} v$ for some  $v\in \mathcal{D}(\sqrt{L})$ and 
\begin{equation*}
\langle b_u,w \rangle_{BMO-H^1}= \langle b_u,\sqrt{L}v \rangle_{BMO-H^1} = \int_{M} \nabla u\cdot \nabla v+W uv d\mu.
\end{equation*}
But \eqref{formq} proves the equality $\langle b_u,w \rangle_{BMO-H^1}=( \sqrt{L} u,w)_{L^2-L^2}$.
The density of $H_{L,mol}^{1,f} (M)$ in $H_L^1(M)$, for the $H_L^1(M)$-norm and in the range $R(\sqrt{L})$ of $\sqrt{L}$, for the $L^2(M)$-norm, ensures the compatibility of $b_u$ and $\sqrt{L}u$.
Thus, the equality  $b_u=\sqrt{L}u$ is true for $u$ satisfying \eqref{defi-Dinf}.
\end{proof}

Finally, as in  Theorem \ref{theo2-psc} we prove that the reverse semi-classical Bernstein inequality $(SRB_q)$ is independent of the choice of the function $\Psi$. More precisely, 

\begin{theorem}\label{rev-univ}
Suppose \eqref{doublin} and $(G)$, then for any $q\in [1,+\infty]$ the following assertions are equivalent
\begin{enumerate}[i) ]
\item there exists a non-zero function $\Psi_0\in\CC^\infty([0,+\infty))$ vanishing near $0$ and being constant near $+\infty$ such that $(SRB_q)$ holds,
\item for any $\Psi \in \CC^\infty([0,+\infty))$ vanishing near $0$ and being constant near $+\infty$, $(SRB_q)$ holds,
\item for any $\beta>\frac{1}{2}$, the inequality $(SRB_q)$ holds for $\Psi:x\mapsto x^{\beta} e^{-x}$,
\item there exists $\beta>\frac{1}{2}$ such that $(SRB_q)$ holds for $\Psi:x\mapsto x^{\beta} e^{-x}$.
\end{enumerate}
\end{theorem}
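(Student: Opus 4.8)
The plan is to introduce the auxiliary statement (P) --- that $(SRB_q)$ holds for every $\psi\in\CC_c^\infty(0,+\infty)$ --- and to show that each of i)--iv) is equivalent to (P). Three implications cost nothing: ii) $\Rightarrow$ i), iii) $\Rightarrow$ iv) (restrict to one admissible function, resp.\ one $\beta$), and (P) $\Rightarrow$ i) (a non-zero element of $\CC_c^\infty(0,+\infty)$ is constant, equal to $0$, near $+\infty$, hence admissible in i)). It thus remains to prove i) $\Rightarrow$ (P), iv) $\Rightarrow$ (P), (P) $\Rightarrow$ ii) and (P) $\Rightarrow$ iii); the only ingredients will be the uniform bound \eqref{mult} on $L^q(M)$ and the weak factorization of Corollary \ref{dixm3}, so there is no circularity with Theorem \ref{theo3-sc}, whose proof invokes the present statement. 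The cornerstone is a rescaling lemma: \emph{if $(SRB_q)$ holds for one non-zero $\psi_0\in\CC_c^\infty(0,+\infty)$, then it holds for all of them}. I would prove it exactly as in Steps~1--2 of Theorem \ref{theo2-psc}: since $\psi_0^2(hL)u=\psi_0(hL)\big(\psi_0(hL)u\big)$ and $\sup_{h>0}\|\psi_0(hL)\|_{q\to q}<\infty$ by \eqref{mult}, replacing $\psi_0$ by $\psi_0^2$ is harmless, so we may assume $\psi_0\ge0$ and $\psi_0>0$ on some $[a,b]\subset(0,+\infty)$. Given $\psi\in\CC_c^\infty(0,+\infty)$, set $K:=b/a>1$ and pick $N$ with $\supp\psi\subset[K^{-N}a,K^{N}b]=\bigcup_{n=-N}^{N}[K^n a,K^n b]$; then $\sum_{n=-N}^{N}\psi_0(K^{-n}\cdot)>0$ on $\supp\psi$, so $\psi=\sum_{n=-N}^{N}\psi_0(K^{-n}\cdot)\,\varphi\psi$ for a suitable $\varphi\in\CC_c^\infty(0,+\infty)$, and commuting the functional calculus gives $\psi(hL)u=\sum_{n=-N}^{N}(\varphi\psi)(hL)\,\psi_0\big((K^{-n}h)L\big)u$. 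Combining $\|(\varphi\psi)(hL)\|_{q\to q}\le C$ with $(SRB_q)$ for $\psi_0$ at scale $K^{-n}h$ yields $\|\psi(hL)u\|_q\le C\sqrt h\big(\|\nabla u\|_q+\|\sqrt W u\|_q\big)\sum_{n=-N}^{N}K^{-n/2}$, a finite sum; this is (P).

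To obtain (P) from i) and from iv) I would extract a compactly supported cutoff and apply the rescaling lemma. If $\Psi_0$ in i) is $\equiv0$ near $+\infty$ it already lies in $\CC_c^\infty(0,+\infty)$. Otherwise $\Psi_0\equiv c\neq0$ on some $[R,+\infty)$: choosing a non-zero $\psi_0\in\CC_c^\infty(0,+\infty)$ supported in $[R,R+1]$ gives $\psi_0=c^{-1}\psi_0\Psi_0$, whence $\psi_0(hL)u=c^{-1}\psi_0(hL)\big(\Psi_0(hL)u\big)$, and \eqref{mult} together with $(SRB_q)$ for $\Psi_0$ gives $(SRB_q)$ for $\psi_0$. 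For iv), fix the given $\beta>\tfrac12$, take any non-zero $\psi_0\in\CC_c^\infty(0,+\infty)$ and set $\chi(x):=\psi_0(x)x^{-\beta}e^{x}$, which extends to an element of $\CC_c^\infty([0,+\infty))$ since $\psi_0$ vanishes near $0$; from $\psi_0=\chi\cdot(x^\beta e^{-x})$ we get $\psi_0(hL)u=\chi(hL)\big((hL)^\beta e^{-hL}u\big)$, and again \eqref{mult} and $(SRB_q)$ for $x\mapsto x^\beta e^{-x}$ give $(SRB_q)$ for $\psi_0$.

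For the converse directions, fix a dyadic partition $\psi_0\in\CC_c^\infty(0,+\infty)$ with $\sum_{j\in\Z}\psi_0(2^{-j}x)=1$ on $(0,+\infty)$. For (P) $\Rightarrow$ ii), let $\Psi$ vanish near $0$ and be $\equiv c$ near $+\infty$; for suitable integers $J_1\le J_2$ one has $\Psi=\sum_{j=J_1}^{J_2}\psi_0(2^{-j}\cdot)\Psi+c\sum_{j>J_2}\psi_0(2^{-j}\cdot)$ (the low-$j$ terms vanish because $\Psi\equiv0$ near $0$, the high-$j$ ones because $\Psi\equiv c$ near $+\infty$). The finitely many middle functions lie in $\CC_c^\infty(0,+\infty)$ and are covered by (P), while for $u$ with finite right-hand side,
\[
\Big\|\sum_{j>J_2}\psi_0\big((2^{-j}h)L\big)u\Big\|_q\le C\sqrt h\big(\|\nabla u\|_q+\|\sqrt W u\|_q\big)\sum_{j>J_2}2^{-j/2}<\infty
\]
by $(SRB_q)$ for $\psi_0$ applied termwise at scale $2^{-j}h$; summing all contributions gives $(SRB_q)$ for $\Psi$. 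For (P) $\Rightarrow$ iii), given $\beta>\tfrac12$ the function $\Phi(x):=x^{\beta-1/2}e^{-x}$ is smooth on $(0,+\infty)$ and satisfies \eqref{dixm3-h} precisely because $\beta-\tfrac12>0$ (near $0$, $x^{k-1}|\Phi^{(k)}(x)|\le C\,x^{\beta-3/2}$ is integrable). Corollary \ref{dixm3} then yields $\Phi(x)=\int_0^{+\infty}\Theta_1(x/y)F_1(y)\,dy+\int_0^{+\infty}\Theta_2(x/y)F_2(y)\,dy$ with $\Theta_i\in\CC_c^\infty(0,+\infty)$ and $F_i\in L^1(0,+\infty)$; multiplying by $\sqrt x$ and writing $\sqrt x\,\Theta_i(x/y)=\sqrt y\,\tilde\Theta_i(x/y)$ with $\tilde\Theta_i(t):=\sqrt t\,\Theta_i(t)\in\CC_c^\infty(0,+\infty)$ gives $x^\beta e^{-x}=\sum_{i}\int_0^{+\infty}\tilde\Theta_i(x/y)\sqrt y\,F_i(y)\,dy$. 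Applying $(SRB_q)$ for $\tilde\Theta_i$ at scale $h/y$ and using $\int_0^{+\infty}\sqrt{h/y}\,\sqrt y\,|F_i(y)|\,dy=\sqrt h\,\|F_i\|_{L^1(0,+\infty)}$ gives $\|(hL)^\beta e^{-hL}u\|_q\le C\sqrt h\big(\|\nabla u\|_q+\|\sqrt W u\|_q\big)$, that is $(SRB_q)$ for $x\mapsto x^\beta e^{-x}$.

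Collecting these, i) $\iff$ (P) $\iff$ ii), and (P) $\Rightarrow$ iii) $\Rightarrow$ iv) $\Rightarrow$ (P), so the four statements are equivalent. The step I expect to be the main obstacle is the passage between cutoffs that are a \emph{non-zero} constant near $+\infty$ and compactly supported ones --- extracting (P) from such a function and, dually, deducing ii) from (P): the dyadic splitting above works only because $(SRB_q)$ carries the gain $h^{-1/2}$, so the infinite dyadic tail becomes a convergent geometric series $\sum_{j>J_2}2^{-j/2}$. Symmetrically, the condition $\beta>\tfrac12$ in iii)--iv) is exactly the threshold making $x^{\beta-1/2}e^{-x}$ amenable to the Dixmier--Malliavin factorization of Corollary \ref{dixm3}, paralleling its role in Lemma \ref{sqrtL}.
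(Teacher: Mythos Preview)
Your proof is correct and largely parallels the paper's argument: the rescaling lemma (Steps~1--2 of Theorem~\ref{theo2-psc}), the factorization $\psi_0(x)=x^\beta e^{-x}\chi(x)$ for iv)~$\Rightarrow$~(P), and the application of Corollary~\ref{dixm3} to $\Phi(x)=x^{\beta-1/2}e^{-x}$ for (P)~$\Rightarrow$~iii) are exactly what the paper does. The one genuine difference is your treatment of (P)~$\Rightarrow$~ii). The paper handles an arbitrary admissible $\Psi$ by applying Corollary~\ref{dixm3} to $F(x)=\Psi(x)/\sqrt{x}$ (which satisfies \eqref{dixm3-h} since $\Psi$ vanishes near $0$ and is constant near $+\infty$), thereby reducing to compactly supported cutoffs via an integral representation, just as in Step~3 of Section~\ref{sec-Riesz}. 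You instead use a Littlewood--Paley partition $\sum_{j\in\Z}\psi_0(2^{-j}\cdot)=1$ and observe that the tail $\sum_{j>J_2}$ contributes a geometric series $\sum 2^{-j/2}$ because $(SRB_q)$ carries the gain $h^{-1/2}$. Your route is more elementary for this particular implication and makes transparent why the exponent $\tfrac12$ in $(SRB_q)$ suffices for summability; the paper's route is more uniform in that a single tool (Dixmier--Malliavin) covers both ii) and iii). Either way the logic is sound, and your explicit pivot statement (P) and the extraction in i)~$\Rightarrow$~(P) via a cutoff supported where $\Psi_0\equiv c$ are clean simplifications of the paper's squaring argument.
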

\begin{proof}
i) $\Rightarrow$ ii) The idea is completely similar to that used in Section \ref{sec-Riesz}. 
One checks that i) and the multiplier estimates \eqref{mult} imply that $\Psi_0^2$ also satisfies $(SRB_q)$, that is,
\begin{equation*}
\frac{1}{\sqrt{h}}\| \Psi_0 (hL)\Psi_0(hL)u\|_q \leq C \frac{1}{\sqrt{h}}\| \Psi_0(hL)u\|_q \leq C' \| \nabla u\|_q+\|\sqrt{W} u\|_q.
\end{equation*}
As in Step 2 of Section \ref{sec-Riesz}, we similarly check that any $\Psi \in \CC_c^\infty(0,+\infty)$ also satisfies $(SRB_q)$.
We finish as in Step 3 of Section \ref{sec-Riesz} the proof of  ii) by applying Corollary \ref{dixm3}. Indeed, 
the smooth function $F:(0,+\infty)\rightarrow \R$ defined by $F(x)=\frac{1}{\sqrt{x}} \Psi(x)$ clearly satisfies the condition \eqref{dixm3-h}.
Hence 
\begin{equation*}
\frac{1}{\sqrt{x}}\Psi(x)=\int_{0}^{+\infty} \Theta_1\left(\frac{x}{y}\right)F_1(y) dy+
\int_{0}^{+\infty} \Theta_2\left(\frac{x}{y}\right)F_2(y)dy,
\end{equation*}
with  some $\Theta_1,\Theta_2\in \CC_c^\infty(0,+\infty)$ and $F_1,F_2\in L^1(0,+\infty)$.
Similarly to Section \ref{sec-Riesz}, we modify $\Theta_1$ and $\Theta_2$ as follows 
\begin{equation*}
\Psi_1(x)=\sqrt{x}\Theta_1(x) \qquad \mbox{and} \qquad \Psi_2(x)=\sqrt{x}\Theta_2(x).
\end{equation*}
We note that $\Psi_1$ and $\Psi_2$ are smooth and compactly supported in $(0,+\infty)$ and we obtain for $h>0$
\begin{eqnarray*}
\Psi(x)&= & \int_{0}^{+\infty}  \sqrt{y}\Psi_1\left( \frac{x}{y}\right)F_1(y)dy+\int_{0}^{+\infty} \sqrt{y}\Psi_2\left( \frac{x}{y}\right) F_2(y)dy\\
\Psi(hL) & =& \int_{0}^{+\infty}  \sqrt{y}\Psi_1\left( \frac{h}{y}L\right)F_1(y)dy+\int_{0}^{+\infty} \sqrt{y}\Psi_2\left( \frac{h}{y}L\right) F_2(y)dy.
\end{eqnarray*}
The validity of the semi-classical reverse Bernstein inequality $(SRB_q)$ for $\Psi$ is now a consequence of the beginning of the proof ensuring that $(SRB_q)$ holds for the compactly supported functions $\Psi_1$ and $\Psi_2$. More precisely, 
\begin{eqnarray*}
\left\Vert \Psi(hL) u \right\Vert_{q} & \leq &
\int_{0}^{+\infty}  \sqrt{y} \Big\| \Psi_1\left( \frac{h}{y}L\right) u \Big\|_{q} |F_1(y)|dy+\int_{0}^{+\infty} \sqrt{y}\big\|\Psi_2\left( \frac{h}{y}L\right) u\big\|_{q} |F_2(y)|dy \\
& \leq & C \sqrt{h} \big(  \|\nabla u \|_{q}+\|\sqrt{W} u \|_q\big)    \int_{0}^{+\infty} |F_1(y)|+|F_2(y)|dy \\
& \leq & C \sqrt{h} \big(  \|\nabla u \|_{q}+\|\sqrt{W} u \|_q\big).
\end{eqnarray*}

ii) $\Rightarrow$ i) is obvious.

i) $\Rightarrow$ iii) We take  $F(x)=\frac{1}{\sqrt{x}}x^\beta e^{-x}$ and argue as above.

iii) $\Rightarrow$ iv) is obvious.

iv) $\Rightarrow$ i) We choose $\Psi_0$ with compact support in $(0,+\infty)$ and we factorize $\Psi_0(x)=x^\beta e^{-x} \Psi(x)$ with $\Psi\in \CC_c^\infty(0,+\infty)$. Hence, we may use \eqref{mult} to obtain
\begin{equation*}
\frac{1}{\sqrt{h}}\| \Psi_0(hL)u\|_q \leq C \frac{1}{\sqrt{h}} \|  (hL)^\beta e^{-hL} u\|_q\leq C' \|\nabla u\|_q+\|\sqrt{W}u\|_q.
\end{equation*}

\end{proof}

 \section{Examples}\label{sec-examples}
Our results apply to a wide class of differential operators. We shall focus on Schr\"odinger operators, elliptic operators on domains and compact manifolds. 

\medskip
\noindent {1. \it Schr\"odinger operators.}  Let $M = \R^n$ and $0 \le W \in L_{loc}^1$. Then obviously, the heat kernel of $L = -\Delta + W$ satisfies the Gaussian upper bound
$$ 0 \le p_t(x,y) \le \frac{1}{(4 \pi t)^{n/2}} e^{- \frac{|x-y|^2}{4t}}.$$
Thus, Theorem \ref{theo1-psc} applies to $L$. If in addition, the spectrum of $L$ is discrete then we obtain the Bernstein inequality $(B_p)$ on $L^p(\R^n)$ for $p \in [1, 2]$.\\
Let now $q > 1$ and recall that $W$ belongs to the {\it reverse H\"older class} 
$RH_q$ if there exists a constant $C > 0$ such that 
$$ \left( \frac{1}{| Q|} \int_Q W^q(x) dx \right)^{1/q}  \le \frac{C}{|Q|} \int_Q W(x) dx$$
for every cube $Q$. In this case, the Riesz transforms $\nabla(-\Delta + W)^{-1/2}$ and $\sqrt{W} (-\Delta + W)^{-1/2}$ are  bounded on $L^p(\R^n)$ for $ p \in (2, 2q + \varepsilon)$ for some $\varepsilon > 0$. See \cite{AB07} and \cite{Sh95}. Thus the regularity property $(R_p)$ is satisfied and we can  
apply Theorem \ref{theo2-psc} to obtain the semi-classical Bernstein inequality
$$ \| \nabla \psi(hL) \|_{p \to p} +  \| \sqrt{W} \psi(hL) \|_{p \to p} \le \frac{C}{\sqrt{h}}$$
for $\psi \in \CC_c^\infty([0, \infty))$ and $p \in (2, 2q + \varepsilon)$.  Again if $L$ has discrete spectrum then we have the discrete  Bernstein inequality.

If $M$ is a general non-compact Riemannian manifold such that \eqref{doublin} and $(G)$ are satisfied then one can find in \cite{Assad-Ouhabaz2012} conditions on $W$ which imply the boundedness on $L^p(M)$ for  some $p > 2$ of the Riesz transforms $\nabla(-\Delta + W)^{-1/2}$ and $\sqrt{W} (-\Delta + W)^{-1/2}$. One of the  conditions there is an integrability condition of the type
$$ \int_0^1 \left \Vert \frac{\sqrt{W}}{V(., \sqrt{t})^{1/r}} \right \Vert_{r} \frac{dt}{\sqrt{t}} + \int_1^\infty  \left \Vert \frac{\sqrt{W}}{V(., \sqrt{t})^{1/s}} \right \Vert_{s} \frac{dt}{\sqrt{t}} < \infty$$
for some values $r, s > 2$ which depend on $p$. We refer to \cite{Assad-Ouhabaz2012} for the precise statements.

\medskip
\noindent {2. \it The harmonic oscillator.}  Let $M= \R^n$ and $L := -\Delta + |x|^2$ be the harmonic oscillator. Since $W(x) = |x|^2$ is non-negative, we have immediately from Theorem \ref{theo1-p} the Bernstein inequality $(B_p)$ for all $p \in [1,2]$ as well as the reverse Bernstein inequality $(RB_q)$ for $q \in [2, \infty]$ by Theorem \ref{theo3-p}.
In order to reach the cases $p\in (2,+\infty]$ and $q\in [1,2)$ we have to prove the regularity property $(R_\infty)$ (note that the assumption $\inf\limits_{x\in \R^n} V(x,1)>0$ of Theorem \ref{theo3-p} is obvious).
Actually, $(R_\infty)$ is clearly equivalent to the following estimate
\begin{equation}\label{Rinfini}
\sum_{k=1}^n \| \partial_{x_k} e^{-tL}\|_{\infty\to \infty}+\|x_k e^{-tL}\|_{\infty\to \infty}\leq \frac{C}{\sqrt{t}},\qquad t>0.
\end{equation}
Let $p_{t}(x,y)$ be the heat kernel of $-\Delta+|x|^2$. Note that  
\begin{equation}\label{ppr}
p_{t}(x,y)=\prod_{k=1}^n \wp_{t}(x_k,y_k),
\end{equation}
where $\wp_{t}$ is heat kernel in dimension $1$. By Mehler's  formula
\begin{equation*}
\wp_t(x_k,y_k)=\frac{1}{\sqrt{2\pi\sinh(2t)}}\exp\left(-\frac{\tanh(t)}{4}(x_k+y_k)^2-\frac{(x_k-y_k)^2}{4\tanh(t)} \right),
\end{equation*}
which directly extends to the multidimensional case
\begin{equation}\label{Mahler}
p_t(x,y)=\frac{1}{(2\pi\sinh(2t))^{n/2}}\exp\left(-\frac{\tanh(t)}{4}|x+y|^2-\frac{|x-y|^2}{4\tanh(t)} \right).
\end{equation}
For any $t>0$, we obtain  $p_{t}(x,y)\leq \frac{C}{t^{n/2}} \exp\left(-\frac{|x-y|^2}{4t} \right)$ so that $(G)$ holds.
Also we easily have
\begin{equation*}
|\partial_{x_k}p_t(x,y)|\leq \frac{C}{t^{(n+1)/2}} \exp\left(-\frac{|x-y|^2}{ct} \right),\quad t\in (0,1].
\end{equation*}
By using $|x_k|\leq \frac{|x_k-y_k|}{2}+\frac{|x_k+y_k|}{2}$, we obtain the same upper bound
\begin{equation*}
|x_k p_t(x,y)| \leq \frac{C}{t^{(n+1)/2}} \exp\left(-\frac{|x-y|^2}{ct} \right),\quad t\in (0,1].
\end{equation*}
Those upper bounds imply \eqref{Rinfini}, at least for $t\in (0,1]$, as follows 
\begin{eqnarray*}
\| \partial_{x_k} e^{-tL}\|_{\infty\to \infty}+\|x_k e^{-tL}\|_{\infty\to \infty} &\leq&  \sup\limits_{x\in \R^n} \int_{\R^n} |\partial_{x_k}p_{t}(x,y)|+|x_k p_{t}(x,y)| dy \\
& \leq & \frac{C}{\sqrt{t}}.
\end{eqnarray*}
It remains to consider the case $t>1$.  We use \eqref{Rinfini} for $t=\frac{1}{2}$ and the 
exponential decay of $(e^{-tL})_{t\geq 0}$ on $L^\infty(\R^n)$ (note that the spectrum of $L$ is contained in $[1,+\infty)$) to obtain 
\begin{eqnarray*}
\| \partial_{x_k} e^{-tL} \|_{\infty \to \infty}+\| x_k e^{-tL} \|_{\infty \to \infty} &=& \| \partial_{x_k} e^{-\frac{L}{2}} e^{-(t-\frac{1}{2})L}  \|_{\infty \to \infty}+\| x_k e^{-\frac{L}{2}} e^{-(t-\frac{1}{2})L}  \|_{\infty \to \infty} \\
&\le& C \|  e^{-(t-\frac{1}{2})L} \|_{\infty \to \infty} \\
&\le& C' e^{-\frac{1}{2}(t-\frac{1}{2})}\\
& \leq & \frac{C''}{\sqrt{t}}.
\end{eqnarray*}

\medskip
\noindent {3. \it Elliptic operators on domains.} 

Let $\Omega$ be an arbitrary  bounded domain of $\R^n$. We consider the elliptic operator
$$ L = - \sum_{k,l=1}^n  \frac{\partial}{\partial x_k} \left( c_{kl}(x) \frac{\partial}{\partial x_l}  \right) $$
 on $L^2(\Omega)$ and subject to Dirichlet boundary conditions. We assume that  $c_{kl} = c_{lk}  \in L^\infty(\Omega, \R)$ and satisfy the usual  ellipticity  condition
 $$\sum_{k,l=1}^n c_{kl}(x) \xi_k \xi_l \ge \eta | \xi|^2$$
 for some $\eta > 0$ and all $\xi = (\xi_1, \cdots, \xi_n) \in \R^n$ and $x \in \Omega$. It is a well known fact that the heat kernel of $L$ satisfies a Gaussian upper bound (see, e.g. \cite{Davies89} or Chapter 6 in \cite{maati} and the references therein). Therefore, by Theorem \ref{theo1-p}
 $$ \| \nabla \Big{(}  \sum_{k=0}^N  \alpha_k \phi_k \Big{)}  \|_p   \le C \lambda_N \,
\|  \Big{(}  \sum_{k=0}^N  \alpha_k \phi_k \Big{)}  \|_p,
$$
for $p \in [1, 2]$. Here $\phi_k$ are the eigenfunctions  associated  with the eigenvalues $(\lambda_k^2)_k$ of $L$.\\
For $p \in [2, \infty]$,  the later Bernstein inequality  holds under the assumption that $\Omega$ is $C^{1+\varepsilon}$ and the coefficients are $C^\varepsilon$ for some $\varepsilon > 0$.  In this case, the heat kernel of $L$ satisfies the following gradient estimate (and hence the regularity property $(R_p)$ for $1 \le p \le \infty$)
\begin{equation*}
 | \nabla p_t(x,y) |  \le \frac{C}{t^{n/2 + 1/2}} e^{- c\frac{|x-y|^2}{t}}.
 \end{equation*}
 This gradient estimates hold even for complex coefficients and $\nabla p_t(x,y)$ is H\"older continuous. See \cite{EO19}.

\medskip

\noindent {4. \it Compact manifolds.}

Let $M$ be a compact Riemannian manifold  satisfying one of the following hypothesis 
\begin{itemize}
\item[\textit{i)}] The manifold $M$ has no boundary and $\Delta$ will be the Laplace-Beltrami operator.
\item[\textit{ii)}] The manifold $M$ has a smooth boundary which is convex in the sense of \cite[pages 155 and 157]{liyau}. In that case, $\Delta$ will be the Laplace-Beltrami operator with the Neumann boundary condition.
From a geometric point of view, we moreover assume that the Ricci curvature is bounded from below by $-K$ (with $K\geq 0$).
\end{itemize}

In both cases, for the Riemannian measure, the doubling property \eqref{doublin} holds for $n$ being the dimension of $M$.
%In that part, it is not relevant to add a potential to $-\Delta$, so we set $W=0$.
For the null potential $W=0$, it follows from our results  that the Bernstein inequalities $(B_{p,q})$ hold for $1\leq p\leq q \leq +\infty$ as well as the reverse Bernstein inequalities $(RB_q)$ for any $q\in [1,+\infty]$.
As a particular case, we have for any $p\in [1,+\infty]$ and for any eigenvalue $\lambda^2\ge 1$ of $-\Delta$ with eigenfunction $\varphi_\lambda$ the following two-side inequalities
\begin{equation*}
c_1 \lambda \| \varphi_\lambda \|_p \le \| \nabla \varphi_\lambda \|_p \le c_2 \lambda \| \varphi_\lambda \|_p,
\end{equation*}
for some positive constants $c_1$ and $c_2$. This latter inequality is conjectured in \cite{shi2010} for $p=\infty$. Our results answer this conjecture (under the convexity of the boundary in the case of Neumann boundary conditions).\\
 Let us give some details about this. 

\textbf{Case i).} In the boundaryless case, the famous Minakshisundaram theorem implies the following upper bounds
on the the heat kernel $p_t(x,y)$ of $-\Delta$ 
\begin{equation*}
0 \leq p_t(x,y)\leq \frac{C}{t^{n/2}}e^{-c\frac{d(x,y)^2}{t}},\qquad t\in (0,1], \quad(x,y)\in M\times M.
\end{equation*}
We refer to \cite[Chapter VI]{chavel} or \cite[page 204]{BGM71}.
Since $M$ is compact, the volume $V(x,\sqrt{t})$ is equivalent to $\min(1,t^{n/2})$ so that \eqref{doublin} and $(G)$ hold. 

On the other hand, $(R_\infty)$ follows immediately from \eqref{unifo} and the  following gradient estimate  (see \cite{hsu} and references therein) 
\begin{eqnarray*}
|\nabla_x p_t(x,y)| & \leq & C\left(\frac{d(x,y)}{t}+\frac{1}{\sqrt{t}} \right)p_t(x,y)\\
 & \leq & \frac{C}{\sqrt{t}V(x,\sqrt{t})}\left(\frac{d(x,y)}{\sqrt{t}}+1 \right)e^{-c\frac{d(x,y)^2}{t}}\\
 & \leq & \frac{C'}{\sqrt{t}V(x,\sqrt{t})}e^{-c'\frac{d(x,y)^2}{t}}.\\
\end{eqnarray*}
%The above estimates allow us to write
%\begin{eqnarray*}
%\sqrt{t}\left\Vert \nabla e^{t\Delta}\right\Vert_{\infty\rightarrow \infty} & \leq & \sup\limits_{x\in M} \int_{M}\sqrt{t}|\nabla_x p_t(x,y)| d\mu(y) \\
%& \leq & C'\sup\limits_{x\in M} \frac{1}{V(x,\sqrt{t})}\int_{M} e^{-c\frac{d(x,y)^2}{t}}d\mu(y).
%\end{eqnarray*}
%By using \eqref{unifo}, the last upper bound is uniformly bounded with respect to $t>0$. 

\textbf{Case ii).} For the Neumann boundary case, we need the following important result proved in \cite[Theorem 3.2]{liyau} for the heat kernel $h_t(x,y)$ of $-\Delta$ :
\begin{equation*}
0 \le h_t(x,y) \le \frac{C}{\sqrt{V(x,\sqrt{t})V(y,\sqrt{t})}} e^{\varepsilon t} e^{-c \frac{d^2(x,y)}{t}},\qquad t>0,\quad (x,y)\in M\times M.
\end{equation*}
for all $\varepsilon > 0$ and some constants $C$ and $c$.
Note that, we clearly have $V(x,\sqrt{t})\simeq V(y,\sqrt{t})$ (even if $x$ or $y$ belong to the smooth boundary $\partial M$).
Thanks to \cite[Corollary 1.3 and Theorem 1.2]{grigo95} the previous estimates imply the following time derivatives bounds
\begin{equation*}
\left\vert \frac{ \partial h_t(x,y)}{\partial t}\right\vert \le \frac{C}{V(x,\sqrt{t})} \frac{e^{\varepsilon t}}{t} e^{-c \frac{d^2(x,y)}{t}}.
\end{equation*}
On the other hand,  we obtain from \cite{Davies89b} and \cite[Theorem 1.4]{liyau} that 
\begin{eqnarray*}
|\nabla_x h_t(x,y)|^2 & \le & C h_t(x,y)\left\vert \frac{ \partial h_t(x,y)}{\partial t}\right\vert+C\left(K+\frac{1}{t}\right)h_t(x,y)^2\\
|\nabla_x h_t(x,y)|& \leq & \frac{C}{ V(x,\sqrt{t})} \left( K+\frac{1}{t}\right)^{1/2} e^{\varepsilon t} e^{-c \frac{d^2(x,y)}{t}}.
\end{eqnarray*}
The exponential loss $e^{\varepsilon t}$ can actually become an exponential gain by considering $L=-\Delta+2\varepsilon$ instead of $-\Delta$ due to the identity $p_t(x,y)=e^{-2\varepsilon t} h_t(x,y)$ so that we have
\begin{eqnarray}\label{liyau-man}
0\leq p_t(x,y) & \leq & \frac{C}{V(x,\sqrt{t})} e^{-\varepsilon t} e^{-c \frac{d^2(x,y)}{t}}, \\ \nonumber
|\nabla_x p_t(x,y)| &\le & \frac{C}{ V(x,\sqrt{t})} \left( K+\frac{1}{t}\right)^{1/2} e^{-\varepsilon t} e^{-c \frac{d^2(x,y)}{t}}.
\end{eqnarray}
%As for the boundaryless case, such estimates imply the regularity property $(R_\infty)$ for $L$ as follows
%we can bound
%\begin{eqnarray*}
%\sup\limits_{t>0}\sqrt{t}\left\Vert \nabla e^{-tL}\right\Vert_{\infty\rightarrow \infty} & \leq & \sup\limits_{t>0}\sup\limits_{x\in M} \int_{M}\sqrt{t}|\nabla_x p_t(x,y)| %d\mu(y) \\
%& \leq & C\sup\limits_{t>0}\left(\sqrt{t}\left(K+\frac{1}{t}\right)^{1/2} e^{-\varepsilon t} \sup\limits_{x\in M} \frac{1}{V(x,\sqrt{t})}\int_{M} e^{-c\frac{d(x,y)^2}{t}}d\mu(y) \right)\\
%& < & +\infty.
%\end{eqnarray*}
The last bound and \eqref{unifo} imply  $(R_\infty)$ for $L=-\Delta+2\varepsilon$. Note that $(B_p)$ reads as 
%Note also that \eqref{liyau-man} implies the bound $|p_t(x,x)|\leq Ct^{-n/2}$.
%Hence, all the theorems in the introduction are applicable for $L=-\Delta+2\varepsilon$. For instance $(B_p)$ reads for any coefficients $\alpha_0,\dots,\alpha_N$ as follows
\begin{equation*}
\| \nabla \Big{(}  \sum_{k=0}^N  \alpha_k \phi_k \Big{)}  \|_p  \le C (2\varepsilon+\lambda_N) \
\|  \Big{(}  \sum_{k=0}^N  \alpha_k \phi_k \Big{)}  \|_p.
\end{equation*}
%Since for $N=0$ the lower bound is zero, one may assume $N\geq 1$.
%Remembering that $\varepsilon>0$ may be chosen as small as wanted in the Li-Yau inequalities \eqref{liyau-man}, one may fix $2\varepsilon\leq \lambda_1$ %at the beginning. Hence we get $2\varepsilon+\lambda_N\leq 2\lambda_N$ and $(B_p)$ finally holds for $-\Delta$.
We can remove $\varepsilon$ from this estimate by taking $2\varepsilon\leq \lambda_1$. We deal similarly with the reverse Bernstein inequality. 

\bibliographystyle{alpha}
%\bibliography{bernstein-bib}

 \newcommand{\etalchar}[1]{$^{#1}$}

\end{document}